\documentclass[11pt,a4paper, oneside]{amsart}
\usepackage[utf8]{inputenc}
\usepackage[english]{babel}
\usepackage{amsfonts,amsmath,amssymb,amsthm,enumitem, mathrsfs}
\usepackage[left=2.4 cm,right=2.4 cm,top=2.4 cm,bottom=2.3 cm]{geometry}
\usepackage{mathtools}

\usepackage{color, xcolor}
\definecolor{LinkBlue}{RGB}{0,92,170}
\definecolor{CiteGreen}{RGB}{0,128,90}
\definecolor{UrlPurple}{RGB}{120,0,160}
\PassOptionsToPackage{
  unicode,
  colorlinks=true,
  linkcolor=LinkBlue,
  citecolor=CiteGreen,
  urlcolor=UrlPurple,
  pdfborder={0 0 0}
}{hyperref}
\usepackage{ifpdf}
\ifpdf
    \usepackage[pdftex]{graphicx}
    \DeclareGraphicsExtensions{.png,.pdf,.jpg}
\else
   \usepackage[dvips]{graphicx}
   \DeclareGraphicsExtensions{.eps}
\fi
\graphicspath{{.}{figures/}}
\numberwithin{equation}{section}

\usepackage{doi}
\hypersetup{
  colorlinks=true,
  linkcolor=LinkBlue,
  citecolor=CiteGreen,
  urlcolor=UrlPurple,
  pdfborder={0 0 0}
}
\usepackage{comment}
\usepackage[p,osf]{cochineal}
\usepackage[scale=.95,type1]{cabin}
\usepackage[zerostyle=c,scaled=.94]{newtxtt}

\newtheorem{theorem}{Theorem}[section]
\newtheorem{lemma}[theorem]{Lemma}
\newtheorem{proposition}[theorem]{Proposition}

\newtheorem{definition}[theorem]{Definition}
\newtheorem{remark}[theorem]{Remark}
\newtheorem*{theorem*}{Theorem}
\newtheorem*{lemma*}{Lemma}
\newtheorem*{proposition*}{Proposition}
\newtheorem*{corollary*}{Corollary}%\pagefooter{}{\thepage}{{\small\sc [\today]}}
\newtheorem{hypothesis}[theorem]{Hypothesis}

\def\R{\mathbb{R}}

\newcommand\dtv{\mathrm{d}_{\mathrm{TV}}}
\newcommand{\I}{\mathrm{I}}
\newcommand{\II}{\mathrm{II}}
\newcommand{\III}{\mathrm{III}}

\def\N{\mathbb{N}}
\def\EE{\mathbb{E}}

\def\D{\mathscr{D}}
\def\bD{\beta^{\mathrm{low}}_{p,d}}

\newcommand{\dkl}{\mathrm{d}_{\mathrm{KL}}}

\def\LM#1{\hbox{\vrule width.2pt \vbox to#1pt{\vfill \hrule width#1pt
height.2pt}}}
\def\LL{{\mathchoice {\>\LM7\>}{\>\LM7\>}{\,\LM5\,}{\,\LM{3.35}\,}}}
\def\restr{{\LL}}
\renewcommand{\phi}{\varphi}

\def\1{\mathbf{1}}

\def\XXint#1#2#3{{\setbox0=\hbox{$#1{#2#3}{\int}$ }
\vcenter{\hbox{$#2#3$ }}\kern-.57\wd0}}

\def\eps{\varepsilon}
\def\lt{\left}
\def\rt{\right}
\def\les{\lesssim}
\def\ges{\gtrsim}

\DeclareMathOperator*{\argmin}{\arg\!\min}

\newcommand{\sqa}[1]{\left[ #1 \right]}

\newcommand{\Wb}{\mathscr{W}b}

\def\W{\mathscr{W}}
\begin{document}

\title{The Wasserstein cost of importance sampling}
\author{Simon Coste}
\address{Laboratoire de Probabilités, Statistique et Modélisation, Universit\'e Paris Cit\'e, Paris, France}
\email{simon.coste@u-paris.fr}
\author{Michael Goldman}
\address{Centre de Mathématiques Appliquées de l’École Polytechnique, Palaiseau, France}
\email{michael.goldman@cnrs.fr}
\date{\today}

\begin{abstract}Importance sampling (IS) consists in biasing samples from a distribution $f$ towards another distribution $g$. 
    Concretely, given samples $X_i$ from $f$, the IS measure is $$\hat{g}_n = \frac{1}{Z_n}\sum_{i=1}^n \frac{g(X_i)}{f(X_i)} \delta_{X_i},$$ with $Z_n = \sum_{i=1}^n \frac{g(X_i)}{f(X_i)}$. The random measure $\hat{g}_n$ approximates $g$, and is used in many contexts ranging from Monte Carlo integration to Bayesian inference. We show that, in high dimension ($d \geqslant 3$), the Wasserstein cost $\W_p^p(\hat{g}_n, g)$ has order $n^{-p/d}$ in expectation, i.e.
    $$\bD\int gf^{-p/d}\leqslant \liminf_{n \to \infty} n^{p/d} \mathbb{E}[\W_p^p(\hat{g}_n, g)] \leqslant \limsup_{n \to \infty} n^{p/d} \mathbb{E}[\W_p^p(\hat{g}_n, g)] \leqslant\beta_{p,d} \int g f^{-p/d}$$
    where $\bD\leqslant \beta_{p,d}$ are constants depending only on $p$ and $d$, which are equal for $p=2$ and conjectured to be equal for any $p\geqslant 1$. Our results are valid for all $p\geqslant 1$ and $d\geqslant 3$.
    
    In the case where $\bD = \beta_{p,d}$, we show that the asymptotically optimal sampling distribution $f^*$ for importance sampling is not equal to $g$ but to a tempered version of $g$, namely $f^* \propto g^{d/(p+d)}$, which is reminiscent of Zador’s theorem in the domain of measure quantization. 
\end{abstract}

\maketitle 

%\setcounter{tocdepth}{1}
%\tableofcontents
% only one level for sections 

\section{Introduction} 

Let $f,g$ be two probability densities over $\mathbb{R}^d$, supported on the same set. For any measurable function $\varphi$, one can write 
\begin{equation}\label{eq:is}
    \mathbb{E}_{Y \sim g}[\varphi(Y)] = \mathbb{E}_{X \sim f}\left[\varphi(X) \frac{g(X)}{f(X)}\right].
\end{equation}
In particular, for any $\varphi$, if we have samples $X_i$ from $f$, we can approximate the expectation $\mathbb{E}_{Y \sim g}[\varphi(Y)]$ by the empirical average 
\begin{equation}\label{eq:is_0}
    \frac{1}{n}\sum_{i=1}^n \varphi(X_i) \frac{g(X_i)}{f(X_i)}.
\end{equation}
This random variable has mean $\mathbb{E}_{Y \sim g}[\varphi(Y)]$, and if the samples are independent, by the law of large numbers, it converges to this expectation as $n\to \infty$, which gives an estimator of the integral $\mathbb{E}_{Y \sim g}[\varphi(Y)]$ using only samples from $f$. However, \eqref{eq:is_0} implicitly assumes that we can evaluate the density \(g\) itself. In many applications, the target is only available \emph{up to a multiplicative constant}, and the normalization constant $\int g$ could be impossible to compute. This is the usual situation for unnormalized measures, e.g. in Bayesian inference where the posterior is proportional to likelihood times prior. In that case, the weights \(g(X_i)/f(X_i)\) in \eqref{eq:is_0} cannot be computed.

Self-normalized importance sampling resolves this issue by replacing \eqref{eq:is_0} with a ratio in which any unknown multiplicative constant cancels, 
\begin{equation}\label{eq:is_sn}
    \frac{\sum_{i=1}^n \varphi(X_i) g(X_i)/f(X_i)}{\sum_{i=1}^n g(X_i)/f(X_i)}. 
\end{equation}
While this estimator is in general biased, the law of large numbers for \eqref{eq:is_0} still ensures that the numerator converges to $\int \varphi g$ and the denominator converges to $\int g$, hence the estimator converges to $\mathbb{E}_{X \sim g}[\varphi(X)]$ even if $g$ is only known up to a normalization constant. 

The estimator \eqref{eq:is_sn} can be seen as the integral of $\varphi$ against the self-normalized importance sampling measure $\hat{g}_n$, which motivates the following definition. From now on, we will restrict to the cube $(0,1)^d$.

\begin{definition}[self-normalized importance sampling measure]Let $f,g$ be two probability densities on $(0,1)^d$. Let $(X_i)$ be a sequence of iid random variables with distribution $f$. The \emph{self-normalized importance sampling measure} for $g$ based on the samples from $f$ is defined as 
    \begin{equation}
        \hat{g}_n = \frac{1}{Z_n}\sum_{i=1}^n W_i \delta_{X_i}, 
    \end{equation}
    where $W_i = g(X_i)/f(X_i)$ are the importance weights and $Z_n$ is the normalization constant
    \begin{equation}
        Z_n = \sum_{i=1}^n W_i.
    \end{equation}
\end{definition}

Importance sampling and its variants are used in a large variety of contexts, and it is crucial to understand the accuracy of these methods and how it behaves depending on the sample size $n$. Many works were devoted to this topic, as we will review in the next section. It is generally acknowledged that the accuracy of \eqref{eq:is_sn} depends on the distribution and concentration of the importance weights $W_i = g(X_i)/f(X_i)$: if the sampling density $f$ and the target density $g$ are equal, these weights are all equal to 1. Conversely, any significant discrepancy between $f$ and $g$ will lead to a poor concentration of the importance weights (they could have large variance, or be heavy-tailed), resulting in a very poor accuracy of \eqref{eq:is_sn}. 

In 2018, a landmark paper by Chatterjee and Diaconis \cite{chatterjee_diaconis} studied how $\int \varphi d\hat{g}_n$ is close to $\int \varphi dg$ for a fixed function $\varphi$ with unit square-norm. They essentially showed that there is a cutoff at the sample size $n = e^{\dkl(g\mid f)}$, where $\dkl(g|f)$ is the Kullback-Leibler divergence between $f$ and $g$, in the sense that if $n \gg e^{\dkl(g\mid f)}$, then 
$$\mathbb{P}\left(\left|\int \varphi d\hat{g}_n - \int \varphi dg\right|>\varepsilon\right) = O(\varepsilon), $$
while if $n \ll e^{\dkl(f|g)}$, then one can find (at least) one square-integrable function $\varphi$ such that $\int \varphi d\hat{g}_n = 1$ with overwhelming probability, but $\int \varphi dg \leqslant \varepsilon$. This is often interpreted as a curse of dimensionality phenomenon, because in high dimension, the divergence $\dkl(g\mid f)$ can grow linearly with $d$ in many contexts (see \cite[Proposition 15]{bruna2024provable} for a prototypical example).

In this work, we are interested in a more quantitative study of the estimation error using the self-normalized importance sampling measure. Our results are global in the sense that we directly measure the $p$-Wasserstein distance $\W_p(\hat{g}_n, g)$ for $p\geqslant 1$, and prove that it has order $n^{-1/d}$. 

\section{Main result}

\subsection{Asymptotic behavior of the Wasserstein distance}

The $p$-Wasserstein distance is defined for pairs of Borel measures $\mu$ and $\nu$ on $(0,1)^d$ by the formula
\begin{equation}\label{def:Wp}\W_p^p(\mu, \nu)= \inf_{\Pi \in \mathscr{C}(\mu, \nu)}\int |x - y|^p \mathrm{d}\Pi(x, y),\end{equation}
where $\mathscr{C}(\mu, \nu)$ is the set of couplings between $\mu$ and $\nu$. A coupling between $\mu$ and $\nu$ is a measure $\Pi$ on the product space with marginals $\mu$ and $\nu$. Note that we did not impose any normalization on $\mu$ and $\nu$, which may or may not be probability measures---but they must have the same mass.

\begin{hypothesis}\label{hyp:W}
    Our results hold for differentiable densities $f, g$ which are bounded from above and below and with a bounded gradient. For simplicity, we denote by $c>0$ a constant such that all the quantities $$|g|_\infty, |f|_\infty, |1/g|_\infty, |1/f|_\infty,|\nabla g|_\infty, |\nabla f|_\infty, |f/g|_\infty$$ are smaller than $c$. 
\end{hypothesis}

\begin{theorem}Let $f,g$ be two probability densities on $(0,1)^d$ satisfying Hypothesis \ref{hyp:W} and let $\hat{g}_n$ be the self-normalized importance sampling measure for $g$ based on $n$ iid samples from $f$.
    Then, for any $p\geqslant1$, there is a constant $\beta_{p,d} \in ]0,\infty[$ such that 
    \begin{equation}
        \limsup_{n \to \infty} n^{p/d} \mathbb{E}\left[\W_p^p\left(\hat{g}_n, g\right) \right]\leqslant \beta_{p,d}\int_{(0,1)^d} g(x) f(x)^{-p/d}\mathrm{d} x.
    \end{equation}
    Additionally, there is a constant $0<\bD \leqslant \beta_{p,d}$ such that 
    \begin{equation}\label{eq:lower_bound_Wp}
        \liminf_{n \to \infty} n^{p/d} \mathbb{E}\left[\W_p^p\left(\hat{g}_n, g\right) \right]\geqslant \bD\int_{(0,1)^d} g(x) f(x)^{-p/d}\mathrm{d} x.
    \end{equation}
\end{theorem}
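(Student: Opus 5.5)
We approach both bounds through the known asymptotics of the empirical (unweighted) Wasserstein cost, combined with a localization at a mesoscopic scale. The reference fact we rely on is the non-uniform version of the random matching asymptotics: for $n$ iid points with a regular density $f$ on $(0,1)^d$, $d\geqslant 3$, one has $n^{p/d}\mathbb{E}[\W_p^p(\mu_n,f)]\to \beta_{p,d}\int f^{1-p/d}$, where $\beta_{p,d}$ is the constant for the uniform measure on the cube. This result is established by subadditivity arguments. Heuristically, the weighted measure $\hat g_n$ behaves near a point $x$ like a uniform Poisson cloud of intensity $nf(x)$, with constant weights $g(x)/f(x)$ per atom, normalized by $Z_n\approx n$. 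The local transport cost per unit of $g$-mass is then $\beta_{p,d}(nf(x))^{-p/d}$, and integrating against $g$ gives exactly $\beta_{p,d}\int g f^{-p/d}$.

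\textbf{Upper bound.} We partition $(0,1)^d$ into cubes $Q$ of side $\ell$ with $n^{-1/d}\ll \ell\ll 1$. First we replace $Z_n$ by $n$: by Hypothesis~\ref{hyp:W} the weights are bounded, so $|Z_n/n-1|=O(n^{-1/2})$ with overwhelming probability, and a mass rescaling costs little. On each cube we compare $\frac1n\sum_{X_i\in Q}W_i\delta_{X_i}$ with $\frac{g(x_Q)}{f(x_Q)}\cdot\frac1n\sum_{X_i\in Q}\delta_{X_i}$. The error is of relative size $O(\ell)$ in the weights, because $\nabla g$ and $\nabla f$ are bounded; a transport cost of order $\ell^p$ times the mass discrepancy is negligible if $\ell\to0$ slowly, but actually we do better by absorbing smooth weight variations into the target density. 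On each cube we then transport the local empirical measure to $\kappa_Q g$ restricted to $Q$, where $\kappa_Q$ matches masses, using the local matching asymptotic $\approx\beta_{p,d}(nf(x_Q))^{-p/d}\,g(Q)$. Finally, the mass mismatches between cubes, coming from Poisson fluctuations of size $\sqrt{n f(Q)}$, are redistributed globally. We do this by transporting the density $\sum_Q(\kappa_Q-1)g\1_Q$ to zero via a Dacorogna--Moser / $W_p$-versus-$H^{-1}$ bound, which costs $\lesssim \mathbb{E}\|\cdot\|_{H^{-1}}^{p}$. With $d\geqslant3$ this is of order $(n\ell^d)^{-p/2}\ell^{p}$ summed appropriately, which is $o(n^{-p/d})$ for a suitable $\ell$. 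Since $\W_p^p$ is subadditive under such gluing, we conclude using the triangle-type inequality $\W_p(\mu,\nu)\leqslant \W_p(\mu,\rho)+\W_p(\rho,\nu)$ in the form $(a+b)^p\leqslant(1+\eps)^{p-1}a^p+C_\eps b^p$.

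\textbf{Lower bound.} We localize in the reverse direction. An optimal plan restricted to each cube $Q$ gives a transport between $\hat g_n\llcorner Q$ and some measure close to $g\llcorner Q$, up to boundary effects. We handle these with the standard boundary-relaxed matching problem, in which mass may be sent to $\partial Q$. This problem is superadditive, and its constant $\bD$ is defined as the corresponding limit. Locally freezing the weights as above yields $\liminf\geqslant \bD\int gf^{-p/d}$. For $p=2$, the boundary and Dirichlet problems are known to share the same constant, which explains why $\bD=\beta_{p,d}$ there.

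\textbf{Main obstacle.} We expect the hardest part to be the global mass-redistribution step, namely controlling the cost of correcting the random cube masses, including the weight fluctuations and the $Z_n$ normalization, at order $o(n^{-p/d})$ for all $p\geqslant1$. This is where $d\geqslant3$ is used. For large $p$ we would first try to control moments of the $H^{-1}$ norm via concentration, using the boundedness of the weights, and then apply the $L^\infty$-density comparison $\W_p^p\lesssim \|\rho\|_{H^{-1}}^{p}$, valid when densities are bounded below.
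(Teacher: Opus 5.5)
There is a genuine gap in the step you single out as the main obstacle, the global redistribution of cube masses. It affects exactly the range $p>2$ that the theorem is meant to cover.

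The comparison $\W_p^p(\mu,\nu)\les\|\mu-\nu\|_{H^{-1}}^p$ for densities bounded above and below is false when $p>2$. Take $\mu\equiv1$ and $\nu=1+a\psi((x-x_0)/r)$, with $\psi$ a zero-mean bump, $a$ small and $r\to0$. Both densities are bounded above, so the dual inequality $\W_p(\mu,\nu)\ges\|\mu-\nu\|_{\dot W^{-1,p}}$ applies. Testing against $r^{1-d/p'}\chi((x-x_0)/r)$ gives $\W_p^p\ges a^pr^{p+d}$. On the other hand $\|\mu-\nu\|_{H^{-1}}^p\sim a^pr^{p+dp/2}$, which is $\ll a^pr^{p+d}$.

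What the Dacorogna--Moser construction actually gives is $\W_p^p(\mu,\nu)\les(\inf\mu\wedge\inf\nu)^{1-p}\int|\nabla\phi|^p$, with $\Delta\phi=\mu-\nu$ and Neumann conditions. That is a $\dot W^{-1,p}$ bound. So controlling moments of the $H^{-1}$ norm by concentration, as you propose for large $p$, does not close the argument. The order $(n\ell^d)^{-p/2}\ell^p$ per unit volume that you announce is correct. To obtain it, however, you must prove $\EE\int|\nabla\phi|^p\les(n\ell^d)^{-p/2}\ell^p$ for the random source $\sum_Q(\kappa_Q/Z_n-1)g\1_Q$. Note that it is $Z_n$, not $n$, that gives this source zero mass. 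Proving the moment bound needs $L^p$ Green-function or Calder\'on--Zygmund estimates for the Neumann problem on the cube, combined with Rosenthal-type moment bounds, and none of this is in your plan. For $p\leqslant2$ your route does work, since there $\W_p\leqslant\W_2\les\|\cdot\|_{H^{-1}}$.

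The paper avoids any elliptic PDE at this step:
\begin{itemize}
\item It telescopes through dyadic scales, $\lambda^k=\sum_{Q\in\D_k}\kappa_Qg\1_Q$ for $k=0,\dots,K$.
\item On each parent cube it applies Proposition~\ref{lem:bb}, $\W_{p,Q}^p(h_1,h_2)\les\mathrm{diam}(Q)^p(\inf h_1)^{1-p}\int_Q|h_1-h_2|^p$, together with $\EE[\kappa_Q|\kappa_R/\kappa_Q-1|^p]\les n^{1-p/2}v_R^{p/2}$ (Lemma~\ref{lem:kappa_concentration}).
\item This is a crude multiscale $\dot W^{-1,p}$ estimate, valid for every $p\geqslant1$. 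It gives $\EE\W_p^p(\lambda^k,\lambda^{k+1})\les n^{1-p/2}\ell_k^{p(1-d/2)}$, a geometric series dominated by the finest scale because $d\geqslant3$.
\item The factor $K^p$ from the triangle inequality across scales is absorbed by taking $\ell_K=(\log n)^\alpha n^{-1/d}$ with $\alpha>2/(d-2)$.
\end{itemize}

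Three smaller points.
\begin{itemize}
\item Your remark that the weight-freezing error is negligible ``if $\ell\to0$ slowly'' is backwards. The bound $\W_{p,Q}^p(\mu_n,\gamma_Q\nu_n)\leqslant\mathrm{diam}(Q)^p\,\dtv(\mu_n\restr Q,\gamma_Q\nu_n\restr Q)$ with Lipschitz weights gives a total of order $n\ell^{p+1}$. This is $o(n^{1-p/d})$ only if $\ell\ll n^{-p/(d(p+1))}$. Together with $n\ell^d\gg\log n$, which is needed to control all cube counts by a union bound, this forces $\ell$ close to $n^{-1/d}$.
\item Freeze the weights with $\gamma_Q=\mu_n(Q)/\nu_n(Q)$ rather than $g(x_Q)/f(x_Q)$, so that masses agree. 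Also use the exact identity $\W_p^p(\hat g_n,g)=Z_n^{-1}\W_p^p(\mu_n,Z_ng)$ instead of a mass rescaling.
\item In your lower bound you skip a step. After freezing the weights, the two local measures (the paper's $\gamma\nu_n$ and $\lambda f$) have different masses on $Q$. Comparing with $\bD$, which is defined for balanced masses, requires $\Wb_{p,Q}^p(N/|Q|,M/|Q|)\les\ell^p|N-M|^p/M^{p-1}$. Averaged, this is $\les n^{1-p/d}L^{-p(d/2-1)}|Q|$ with $L=n^{1/d}\ell$, so it is negligible only as $L\to\infty$, again using $d\geqslant3$.
\end{itemize}
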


\subsection{Remarks}

{~} \bigskip

 \paragraph{A}The constants $\beta_{p,d}$ and $\bD$ are respectively defined in \eqref{def:beta} and \eqref{def:betalow}. It is conjectured that they are equal. This was only proved in the case where $p=2$ and $d\geqslant 3$ in \cite{huesmann2024asymptotics}. 
\bigskip

 \paragraph{B} Our proof has the decisive advantage of being valid for the whole range of $p\geqslant 1$ and $d\geqslant 3$. Other simpler proofs (like the one in \cite{BaBo13}) could work, but only for tighter ranges like $p\leqslant d/2$.

\bigskip

\paragraph{C}  
 The hypothesis that the common support of $f$ and $g$ is the cube $(0,1)^d$ could be relaxed to cover compact sets with Lipschitz boundary at the price of some technicalities, see \cite[Proposition 5.1]{goldman2024optimal}.

\bigskip

\paragraph{D}    When $g=f$, this theorem reduces to \cite[Theorem A.2]{goldman2024optimal}, 
    where it is proved that $\W^p_p(\hat{g}_n, g)$ is of order $n^{-p/d}\beta_{p,d}\int g^{1 - p/d}$. The extra term $\int g^{1 - p/d}$ is equal to 1 if $g\equiv 1$, hence this term accounts for the non-uniformity of 
     $g$. Since $x\mapsto x^{1 - p/d}$ is convex if $p\geqslant d$ and concave if $p < d$, Jensen's inequality yields
\begin{equation}\label{eq:g^1-p/d}
    \int_{(0,1)^d} g(x)^{1 - p/d} \leqslant 1 \quad \text{ iff } p < d.
\end{equation}
The interpretation of \eqref{eq:g^1-p/d} is that, when $p < d$ and $g$ is nonuniform, the IS error is \emph{smaller} than if $g$ were uniform. The regime $p< d$ is the typical one in high dimension; in practice, one often uses $p=1$ or $p=2$.

\subsection{Asymptotically optimal sampling distribution}

It might sound natural that, when approximating $g$ by IS, using reweighted samples from a 
distribution $f\neq g$, the error should be higher, i.e. the extra term $\int g f^{-p/d}$ should be greater
 than $\int g^{1 - p/d}$. Again, this is not true in general. Suppose for example that the sampling distribution is uniform, ie $f=1$. Then, \eqref{eq:g^1-p/d} shows that $\int g^{1 - p/d} > 1=\int g f^{-p/d}$ if $p \geqslant d$. In other words, \emph{the IS error using real samples from $g$ is greater than the IS error using uniform samples from $g$}. So it is only natural to ask, for a fixed $g$, what is the sampling distribution $f$ that minimizes the error $\W_p(\hat{g}_n, g)^p$. Our results do not provide the exact asymptotic of $\W_p(\hat{g}_n, g)^p$ as $n \to \infty$, because the two constants $\beta_{p,d}$ and $\bD$ are not known to be equal --- but they are generally conjectured to be equal. Under this conjecture, we would have 
$$\W_p^p(\hat{g}_n, g) \sim n^{-p/d}\beta_{p,d}\int_{(0,1)^d} g(x) f(x)^{-p/d}\mathrm{d} x$$
and it is natural to minimize this asymptotic equivalent with respect to $f$. The following proposition gives the minimizer. 

\begin{proposition}The solution to the problem 
$$f^* \in \argmin_{f \geqslant 0, \int f = 1} J_g(f), \qquad J_g(f) = \int g f^{-p/d}$$
is found at 
\begin{equation}\label{eq:f_star}
f^*(x) = \frac{1}{\int g^{\frac{d}{p+d}}}g(x)^{\frac{d}{p+d}}.
\end{equation}
The minimum value is then 
\begin{equation}\label{eq:J_f_star}
    J_g(f^*) = \left(\int g^{\frac{d}{p+d}}\right)^{\frac{p+d}{d}}.
\end{equation}
\end{proposition}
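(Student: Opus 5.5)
The argument is a single application of Hölder's inequality, with equality characterising the minimiser. Set $q=p/d>0$. For any probability density $f>0$ I will bound $\int g^{\frac{d}{p+d}} = \int g^{\frac{1}{1+q}}$ from above in terms of $J_g(f)$ and $\int f=1$. Write
\[
g^{\frac{1}{1+q}} = \bigl(g f^{-q}\bigr)^{\frac{1}{1+q}}\, f^{\frac{q}{1+q}}
\]
and apply Hölder's inequality with conjugate exponents $r=1+q$ and $r'=\frac{1+q}{q}$. This gives
\[
\int g^{\frac{1}{1+q}} \leqslant \Bigl(\int g f^{-q}\Bigr)^{\frac{1}{1+q}} \Bigl(\int f\Bigr)^{\frac{q}{1+q}} = J_g(f)^{\frac{1}{1+q}}.
\]
Raising both sides to the power $1+q=\frac{p+d}{d}$ yields $J_g(f)\geqslant \bigl(\int g^{\frac{d}{p+d}}\bigr)^{\frac{p+d}{d}}$ for every admissible $f$.

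Next I will check that $f^*$ attains this bound. With $f^* = g^{\frac{d}{p+d}}/\int g^{\frac{d}{p+d}}$ we have
\[
g (f^*)^{-p/d} = g^{1-\frac{p}{p+d}}\Bigl(\int g^{\frac{d}{p+d}}\Bigr)^{p/d} = g^{\frac{d}{p+d}}\Bigl(\int g^{\frac{d}{p+d}}\Bigr)^{p/d}.
\]
Integrating gives exactly \eqref{eq:J_f_star}, so $f^*$ is a minimiser.

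For uniqueness, recall that equality in Hölder's inequality forces $g f^{-q}$ to be proportional to $f$ almost everywhere. This means $f^{1+q}\propto g$, that is $f\propto g^{\frac{d}{p+d}}$, and normalisation then gives \eqref{eq:f_star}.

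The only points requiring care are degenerate densities $f$ that vanish on a set of positive measure. On such a set, where $g>0$, we have $J_g(f)=+\infty$, so these $f$ never compete. Under Hypothesis \ref{hyp:W} all the relevant integrals are finite and positive, so none of the manipulations above is problematic. I do not expect any real obstacle.
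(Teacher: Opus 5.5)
Your proof is correct, but it takes a different route from the paper. The paper introduces a Lagrangian and invokes convexity of $t\mapsto t^{-p/d}$ to say that the first-order conditions $-\tfrac{p}{d}\,g f^{-p/d-1}+\lambda=0$, $\int f=1$ are sufficient. It then reads off $f\propto g^{d/(p+d)}$ and leaves the minimum value as a direct computation.

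You instead get the global lower bound $J_g(f)\geqslant \bigl(\int g^{d/(p+d)}\bigr)^{(p+d)/d}$ in one step from Hölder's inequality. You then check that $f^*$ attains it, and obtain uniqueness from the equality case of Hölder.

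Your argument is a self-contained certificate of optimality, and it has three advantages:
\begin{itemize}
\item It does not rely on the justification, unstated in the paper, that first-order conditions suffice for a convex functional on an infinite-dimensional set of densities. (That justification can be supplied by integrating the pointwise tangent inequality $g f^{-a}\geqslant g (f^*)^{-a}-a\,g (f^*)^{-a-1}(f-f^*)$, with $a=p/d$, using that $a\,g(f^*)^{-a-1}$ is constant and $\int (f-f^*)=0$.)
\item It disposes of densities with $J_g(f)=+\infty$ trivially.
\item It yields the value \eqref{eq:J_f_star} and uniqueness at no extra cost.
\end{itemize}

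The Lagrangian route, for its part, shows where the exponent $d/(p+d)$ comes from. It also extends directly to general convex integrands $g\,\phi(f)$, for which no Hölder factorisation is available.
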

\begin{proof}
We set $a = p/d$ and define the Lagrangian of the problem, $\mathscr{L}(f, \lambda) = J_g(f) + \lambda (\int f - 1)$. The function $x^{-a}$ is convex, so $J_g$ is itself convex, hence the first-order conditions for optimality are sufficient. These conditions read $ - a g f^{-a-1} + \lambda = 0 \quad \text{and} \quad \int f = 1$, whose solution is any density $f$ which must be proportional to $g^{1/(1+a)}$, hence \eqref{eq:f_star}. 
\end{proof}

It is noticeable that if $g$ is not uniform, then $f^* \neq g$. Moreover, since $g$ is admissible, we necessarily have $J_g(f^*) \leqslant J_g(g)$. This result might feel counter-intuitive: when using importance sampling to approximate $g$, the best sampling distribution is not $g$ itself if we measure the error by the transport cost with $\W^p_p$. For example, in 3d problems where the error is measured in the euclidean distance $(p=2)$, the best sampling distribution is (proportional to) $g^{3/5}$, which is very different from $g$. 

\begin{remark}[quantization and Zador's theorem]
    Given a measure $\mu$, the \emph{quantization of $\mu$} with $n$ points is the solution of the following optimization problem:
    \begin{equation}\label{prob:quantization}
        \mathscr{Q}_n(\mu) = \min_{(w_i, x_i)_{1\leqslant i\leqslant n}} \W_p^p\left(\mu, \sum_{i=1}^n w_i \delta_{x_i}\right)
    \end{equation}
    where the minimum is taken over $n$-tuples of points $x_i$ and positive weights $w_i$ such that $\sum_{i=1}^n w_i = 1$. It is known under the name of Zador's theorem (see for example \cite{graf2000foundations}, Theorem 6.2 and Theorem 7.5) that 
    $$\lim_{n \to \infty} n^{p/d}\mathscr{Q}_n(g) = c_{d}\left(\int g^{\frac{d}{p+d}}\right)^{\frac{p+d}{d}}$$
    where $c_{d}$ is a constant; this is, up to a constant, the exact value of the problem \eqref{eq:J_f_star}. Moreover, under some mild conditions, the empirical distribution of the $x_i$ in the arg min of \eqref{prob:quantization} can be shown to converge weakly to \eqref{eq:f_star}. It thus comes as no surprise that the optimal sampling measure $f^*$ for IS is the same as the asymptotically optimal quantization measure.
    \end{remark}

\begin{remark}[Effective Sample Size] There is an extensive literature on \emph{Effective Sample Sizes} (ESS), see \cite{elvira2021advances} for a review. Given a method for sampling a target distribution $g$ using $n$ samples, one can compare any metric (like the variance) of this sampling method with what would be the same metric if we had \emph{real} samples from $g$. 

Consider IS with samples from $f$ and target density $g$, and let $\hat{g}_n$ be the importance sampling measure defined in \eqref{eq:is_sn}; let $\bar{g}_m$ be the empirical measure of $m$ genuine samples from $g$. Asymptotically, the equation $\mathbb{E}[\W^p_p(\hat{g}_n, g)] = \mathbb{E}[\W^p_p(\bar{g}_m, g)]$ becomes 
$$m^{-p/d}\int g^{1-p/d} \sim n^{-p/d}\int g f^{-p/d},$$
which is solved for 
\begin{equation}
    m_{\mathrm{ESS}} \sim n \times \left(\frac{\int g^{1 - p/d}}{\int g f^{-p/d}}\right)^{\frac{d}{p}}. 
\end{equation}
Interestingly, the remark below the main theorem suggests that this ESS is not always smaller than $n$; having $m_{\mathrm{ESS}}>n$ would mean that using importance sampling with $n$ proposal samples from $f$ is actually beneficial over using $n$ samples from the real density $g$. 
\end{remark}

\subsection{Proof outline}
The proof of this theorem spans Sections \ref{sec:proof_main_unif} and \ref{sec:lowerbound}, and adapts the general strategy developed in a series of papers studying optimal matchings between point sets and measures, \cite{goldman2021convergence},\cite{caglioti2024subadditivity}, \cite{goldman2024optimal}. 
Appendices \ref{appendix:concentration_inequalities} and \ref{appendix:functional_inequalities} contain classical results on concentration inequalities and functional inequalities used in the proof. 

\subsection*{Acknowledgements} The authors thank Dario Trevisan for useful discussions and suggestions. Part of this research was supported by the ANR project STOIQUES. This work was supported by a public grant from the Fondation Mathématique Jacques Hadamard.

\section{Related work and perspectives}

\subsection{Literature review}

Importance sampling (IS) is a classical Monte Carlo device, tracing back at least to the early work of Kahn \cite{kahn1953methods}, and now ubiquitous in statistics and applied probability. We refer to the surveys by Agapiou \emph{et al.} \cite{agapiou2017importance} or Elvira \emph{et al.} \cite{elvira2021advances} for a broad overview.

%algebra : \cite{diaconis2026counting} for counting the number of group orbits by marrying the Burnside process with importance sampling. 

A large body of work on IS aims at stabilizing weights or diagnosing when stabilization is needed. Truncated importance sampling \cite{ionides2008truncated} and Pareto-smoothed importance sampling \cite{vehtari2024pareto} are widely used practical fixes, and recent work studies regimes with unbounded weight functions, like \cite{deligiannidis2025importancesamplingindependentmetropolishastings}. IS has become a core tool in variational inference, see for example the importance-weighted VAE \cite{burda2016importanceweightedautoencoders} or the recent work \cite{cherief2025asymptotics}. IS is also used in the training or sampling of generative models in machine learning, see \cite{bruna2024provable,grenioux2024stochastic,pavon2021data,carbone2023efficient,sabour2025testtimescalingdiffusionsflow} for recent examples.

These works typically measure accuracy through function-by-function errors (variance, CLT, concentration for a fixed test function) or through scalar summaries of the weights (ESS and its variants), which are indispensable in practice but do not directly quantify how close the \emph{random measure} \(\hat{g}_n\) is to the target distribution as an object. In general, there are few papers on a rigorous study of the accuracy of IS beyond the landmark paper \cite{chatterjee_diaconis}, and more recently \cite{guo2025complexity} but with a focus on the normalization constant estimation in a dynamic setting (like AIS or using Jarzynski's inequality). By treating self-normalized IS as a \emph{weighted empirical measure} and analyzing it in \(p\)-Wasserstein distance, we obtain sharp scaling laws for \(\W_p(\hat{g}_n,g)\). This provides a genuinely different guarantee: it controls
 the full output of IS as a probability measure (for example, if $p=1$, \emph{all} Lipschitz test functions simultaneously), and it makes explicit how the proposal $f$ enters through a transport-relevant functional of $f$ and $g$.

Our contribution is to bridge IS with the geometric theory of empirical measures in optimal transport. Rates for Wasserstein distances between empirical measures and their limits are connected to optimal matching problems, starting from Ajtai--Koml\'os--Tusn\'ady \cite{AKT84} and further developed in, e.g.,  \cite{dobric1995asymptotics, FoGu15, BaBo13, AmGl19, goldman2024optimal,caglioti2024subadditivity, ambrosio2022quadratic,ledoux2019optimal,goldman2024optimal}. Another line of research focused on general metric (Polish) spaces, see \cite{dudley1969speed,boissard2014mean,
weed2019sharp, dedecker2026concentrationempiricalmeasurewasserstein} ; but we are not aware of any work on importance sampling or weighted empirical measures in Wasserstein distance. 

\subsection{Future work}

\subsubsection{Unbounded weights}

The main limitation of our work lies in the restriction that $f$ and $g$ have full support on $(0,1)^d$ and are bounded from above and below. This is a strong assumption: in this case the weights $W = g(X)/f(X)$ are bounded and cannot be heavy-tailed, for example. But their distribution is crucial to assess the performance of an importance sampling strategy. In fact, $\mathbb{E}_{X \sim f}[W] = 1$, but the variance is given by 
\begin{equation}
    \mathbb{E}_{X \sim f}[W^2] - 1 =  \int \frac{g(x)^2}{f(x)}dx -1 = \mathbb{E}_{X \sim g}[W] - 1.
\end{equation}
This quantity can be infinite, leading to a high variability of \eqref{eq:is_0}, which would be dominated by a few large weights. Unfortunately, our method does not apply to this case, and extending it to the case where $f,g$ are allowed to have unbounded or zero values is hard. See however \cite{huesmann2024asymptotics,caglioti2024subadditivity} for some results in this direction.

\subsubsection{Annealed importance sampling}

Annealed Importance Sampling \cite{neal2001annealed} is a popular refinement of IS. The end goal is to estimate means of functionals with respect to a density $g=g_0$, starting from samples $X_i$ from a density, say, $g_n$. Given a chain of densities $g_0, g_1, \ldots, g_n$, typically $g_k$ is proportional to $g_0^{\beta_k}g_n^{1-\beta_k}$ for some $1=\beta_1 > \beta_2 > \ldots > \beta_n=0$, the annealed importance sampling estimator is defined as follows: starting at $X_i^{(n)}=X_i$, one samples $X_i^{(n-1)}$. This is typically done by running a Markov chain starting from $X_i^{(n)}$, whose invariant measure is $g_{n-1}$. The chain is run for several steps; the resulting sample $X_i^{(n-1)}$ might \emph{not} be exactly distributed according to $g_{n-1}$. Then one samples $X_i^{(n-2)}$ by running a Markov chain starting from $X_i^{(n-1)}$, whose invariant measure is $g_{n-2}$, and so on, until one reaches $X_i^{(0)}$. The annealed importance sampling measure is then defined as
    $$
    \mu = \frac{1}{n}\sum_{i=1}^n W_i \delta_{X_i^{(0)}}
    $$
    where 
    \begin{equation}
        W_i = \frac{g_{n-1}(X_i^{(n-1)})}{g_n(X_i^{(n-1)})}\times \frac{g_{n-2}(X_i^{(n-2)})}{g_{n-1}(X_i^{(n-2)})}\times \cdots \times \frac{g_0(X_i^{(0)})}{g_1(X_i^{(0)})}.
    \end{equation}
One difference with classical importance sampling is that the importance weights do not only depend on $X_i^{(0)}$, the points effectively sampled at the end of the chain. To study AIS, we would need a version of our main result which is suitable for \emph{random} weights $W$ which only satisfy 
$$\mathbb{E}[W \mid X] = \frac{g(X)}{f(X)}.$$
Our preliminary work on this general problem suggests that the constant $\beta_{p,d}$ in our main result would now depend on the whole distribution of $W$ in a non-trivial way.

\subsubsection{Girsanov change-of-measure}
Up to now, we have mostly been interested in the high but finite-dimensional setting; but the problem of finding estimates for infinite-dimensional quantities is also of interest. Let $(B_n)$ be a sequence of independent Brownian motions from $[0,1]$ to $\mathbb{R}^d$. We equip $C=\mathscr{C}([0,1],\mathbb{R}^d)$ with the topology of uniform convergence. The Wasserstein distance between Gaussian measures
on $C$ and the empirical measure $\mu_n = n^{-1}\sum_{i=1}^n \delta_{B_i}$ was studied in\footnote{Their results apply to the Wiener measure and also to the fractional Brownian measure and the Brownian sheet. }  \cite{boissard2014mean} (see also references therein), although the authors only seem to prove upper bounds. But in applied stochastic calculus, many problems revolve around importance sampling through the Girsanov identity. In its simplest form, this theorem states that if $B$ is a Brownian motion and $(h_t)_{t \in [0,1]}$ is a process adapted to the filtration generated by $B$, then
for any functional $\varphi$, 
$$\mathbb{E}[\varphi(B)Z_T] = \mathbb{E}[\varphi(X)]$$
where $Z_T$ is the terminal value of the ``change of measure" process
 $$Z_t = \exp\!\left(-\int_0^t h_s\,dB_s - \frac{1}{2}\int_0^t h_s^2\,ds\right),$$ 
 and $X = (X_t)_{t \in [0,1]}$ is the diffusion process defined by $dX_t = dB_t + h_t dt$. This allows one to compute observables from $X$ using only samples from $B$, thus falling exactly in the framework of importance sampling. We plan on extending our results to this setting in the near future.

\section{Background and notation}

\subsection*{Notation}
For any Borel set $S$, we denote by $|S|$ its volume and $\mathbf{1}_S$ its indicator function. For any real quantities $A,B$, the notation $A \les B$ or equivalently $A = O(B)$ means that there exists a constant $C>0$ (possibly depending on $p$ and $d$ only) such that $A \leqslant C B$. We write $A \sim B$ if $A \les B$ and $B \les A$. 

For a measure $\mu$ and a set $S$, we denote by $\mu\restr S$ the restriction of $\mu$ to $S$.

\subsection*{Elementary inequalities}
Throughout the proof, we will repeatedly use the two elementary inequalities: 
\begin{equation}\label{ineq:elementary}\forall a, b\geqslant 0, \quad\forall p\geqslant 1, \qquad (a+b)^p \leqslant (1+\varepsilon)a^p + c_p\varepsilon^{1-p}b^p\end{equation} for some constant $c_p$; and for all $a_1, \ldots, a_k \geqslant 0$, \begin{equation}\label{ineq:elementary_sum}\forall p\geqslant 1, \qquad (a_1 + \dots + a_k)^p \leqslant k^p (a_1^p + \dots + a_k^p).\end{equation}

\subsection*{The Wasserstein distance and its properties }

For two measures $\mu$ and $\nu$ on the same measurable space $S$ and with the same mass, the Wasserstein distance of order $p$ is defined by
\begin{equation}
    \W_p(\mu, \nu)= \left(\inf_{\Pi \in \mathscr{C}(\mu, \nu)}\int_{S \times S} |x - y|^p \mathrm{d}\Pi(x, y)\right)^{1/p},\end{equation}
where $\mathscr{C}(\mu, \nu)$ is the set of couplings between $\mu$ and $\nu$. A coupling between $\mu$ and $\nu$ is a measure $\Pi$ on the product space $S \times S$ with marginals $\mu$ and $\nu$. Note that we did not impose any normalization on $\mu$ and $\nu$, which may or may not be probability measures---but they must have the same mass. 
The quantity $\W_p^p(\mu, \nu)$ is called the $p$-Wasserstein \emph{cost} between $\mu$ and $\nu$, and it is linear in the mass of $\mu$ and $\nu$ in the sense that 
\begin{equation}\label{wasserstein_linearity}
    \W_p^p(\alpha \mu, \alpha \nu) = \alpha \W_p^p(\mu, \nu).
\end{equation}

If $S$ is bounded, then clearly the term $|x-y|^p$ is bounded by $\mathrm{diam}(S)^p$, so that $\int_{S \times S} |x - y|^p \mathrm{d}\Pi(x, y) \leqslant \mathrm{diam}(S)^p \Pi(S \times S)$, and since $\Pi$ is a coupling between $\mu$ and $\nu$, we have $\Pi(S \times S) = \mu(S) = \nu(S)$. Consequently, 
\begin{equation}\label{wasserstein_bound_bounded_domain}
    \W_p^p(\mu, \nu) \leqslant \mathrm{diam}(S)^p \mu(S).
\end{equation}

If $R$ is a subset of $S$ such that $\mu(R) = \nu(R)$, we denote by $\W_{p,R}(\mu, \nu)$ the Wasserstein distance of order $p$ between $\mu\restr R$ and $\nu\restr R$. If $\mathscr{R}$ is a collection of disjoint subsets of $S$ such that $\cup_{R \in \mathscr{R}} R = S$ and such that $\mu(R) = \nu(R)$ for all $R \in \mathscr{R}$, then for any collection of couplings $\Pi_R$ between $\mu\restr R$ and $\nu\restr R$ for $R \in \mathscr{R}$, the measure $\Pi = \sum_{R \in \mathscr{R}} \Pi_R$ is a coupling between $\mu$ and $\nu$. Consequently, 
\begin{equation}
    \W_{p,S}^p(\mu, \nu)\leqslant \sum_{R \in \mathscr{R}} \W_{p,R}^p(\left.\mu\right|_R, \left.\nu\right|_R).
\end{equation}
This property is called \emph{subadditivity} of the Wasserstein distance. 

\section{Proof of the upper bound}\label{sec:proof_main_unif}

\subsection{The setup}

Let $g$ be a probability density on $(0,1)^d$, and let $X$ be a random variable on $(0,1)^d$ with probability density $f$. We set
\begin{equation}\label{def:W}W = \frac{g(X)}{f(X)}.\end{equation}
Then, for any test function $\varphi$, 
\begin{equation}
    \mathbb{E}_{X \sim g}[\varphi(X)] = \mathbb{E}_{X \sim f}[\varphi(X) W].
\end{equation}
The density $f$ of $X$ will often be called the \emph{sampling density} while $g$ is called the \emph{target density}.

Several of our estimates are formulated in terms of the relative second moment of $W$, denoted 
\begin{equation}\label{def:v}v_A = \frac{\mathrm{Var}(W \mathbf{1}_{X \in A})}{\mathbb{E}[W \mathbf{1}_{X \in A}]^2}\qquad \forall A \subset (0, 1)^d. \end{equation}
In this paper, we only use very crude bounds on $v_A$. Indeed, since we assumed in Hypothesis \ref{hyp:W} that $W \leqslant c$, it is easily seen that $v_A \leqslant \frac{c}{\int_A g}$; and since $g \geqslant c^{-1}$, this is smaller than $c^2 |A|^{-1}$: 
\begin{equation}\label{bound_v}\forall A, \qquad v_A \les |A|^{-1}. 
\end{equation}

Let $n$ be an integer and $(X_i)_{i \in [n]}$ be a sequence of independent random variables with  density $f$, with their weights $W_i = g(X_i)/f(X_i)$. We set
\begin{equation}
    \mu_n = \sum_{i=1}^n W_i \delta_{X_i}
\end{equation}
the empirical measure of the $X_i$, weighted by the $W_i$, and
\begin{equation}
    Z_n = \sum_{i=1}^n W_i
\end{equation}
the normalization constant, so that $Z_n^{-1}\mu_n$ is a probability measure. When $g=f$, the weights $W_i$ are all equal to 1. Under the additional hypothesis that $g\equiv 1$, the main result of \cite{goldman2021convergence} is the existence of the following limit, which will be our definition of $\beta_{p,d}$:
\begin{equation}\label{def:beta}\limsup_{n \to \infty} n^{p/d} \mathbb{E}\W_p^p\left(\frac{1}{n}\sum_{i=1}^n \delta_{X_i}, 1\right) = \beta_{p,d}.\end{equation}
Our goal is to prove that, under Hypothesis \ref{hyp:W}, 
    \begin{equation}\label{eq:main_0}
        \limsup_{n \to \infty} n^{p/d} \mathbb{E}\W_p^p\left({Z_n}^{-1}\mu_n, g\right) \leqslant \beta_{p,d}\int_{(0,1)^d} g(x) f(x)^{-p/d} \mathrm{d} x.
    \end{equation}
    We will first prove a non-normalized version of \eqref{eq:main_0}: 
    \begin{equation}\label{eq:main_1}
        \limsup_{n \to \infty} n^{p/d-1} \mathbb{E}\mathscr{W}_p^p\left(\mu_n, Z_n g\right) \leqslant \beta_{p,d}\int_{(0,1)^d} g(x) f(x)^{-p/d} \mathrm{d} x. 
    \end{equation}
We will then deduce \eqref{eq:main_0} from \eqref{eq:main_1}.

\subsection{Dyadic decomposition}

We divide the cube $Q^0=(0,1)^d$ dyadically into $2^{dk}$ dyadic cubes $Q^k_i$ with side length \begin{equation}\label{def:l}\ell_k=2^{-k}\end{equation} and we note $\D_k = \{Q^k_i : i \in [2^{dk}]\}$ the set of these cubes. Note that the diameter of any $Q \in \D_k$ is
\begin{equation}\label{diam}
    \mathrm{diam}(Q) = \sqrt{d}\,2^{-k} \sim \ell_k.
\end{equation}
We will study how the points $X_i$ are distributed in these cubes and how this distribution changes when we change the scale $k$. For this, we note
\begin{equation}
    \label{def:nu}\nu_n = \sum_{i=1}^n \delta_{X_i}
\end{equation}
the \emph{unweighted empirical measure} of the $X_i$. For every measurable set $Q\subset (0,1)^d$,  we define
\begin{align}\label{def:kappa_gamma_delta}
    &\kappa_Q=\frac{\mu_n(Q)}{\int_Q g}, &&\gamma_Q = \frac{\mu_n(Q)}{\nu_n(Q)} &&\delta_Q = \frac{\mu_n(Q)}{\int_Q f}. 
\end{align}

In the case where $\nu_n(Q)=0$, then we also have $\mu_n(Q)=0$ and we set $\gamma_Q=0$. 
For any scale $k$, we define the measure
\[
 \lambda^k=\sum_{Q \in \D_k} \kappa_Q \mathbf{1}_Q g. 
\]
Note that, since $\kappa_{Q^0} = Z_n$, we clearly have $\lambda^0 = Z_n g$. We will go down to a maximal scale $K$ defined by 
\begin{equation}\label{def:K}
    K = \lfloor d^{-1} \log_2 n \rfloor - \omega \quad \qquad \omega = \lfloor\log_2 ((\log n)^\alpha)\rfloor
\end{equation}
for some $\alpha > 0$ --- it will be clear later on that any $\alpha \geqslant 3$ works. For simplicity, we could choose $\alpha=3$. The side length $\ell_K$ of any cube $Q \in \D_K$ is thus equal to 
\begin{equation}\label{eq:ell_K}
    \ell_K = 2^{-K} \sim \frac{2^{\omega}}{n^{1/d}} = \frac{(\log(n))^{\alpha}}{n^{1/d}}
\end{equation}
and the volume of any cube $Q \in \D_K$ is equal to 
\begin{equation}\label{eq:vol_DK}
    |Q| = \ell_K^d \sim \frac{(\log(n))^{\alpha d}}{n}.
\end{equation}

\subsection{Reduction to the dyadic decomposition at the maximal scale}

\begin{lemma}\label{lem:reduction_to_maximal_scale}For any  $p\geqslant 1$, 
    \begin{equation}\label{ineq:mainterm}
        \limsup_{n \to \infty} n^{p/d - 1}\EE\lt[\W_{p}^p(\mu_n, Z_n g)\rt] \leqslant (1+\eps) \limsup_{n \to \infty} \sum_{Q \in \D_K} n^{\frac{p}{d}-1}\EE\lt[\W_{p,Q}^p(\mu_n , \kappa_{Q} g)\rt].
    \end{equation}
\end{lemma}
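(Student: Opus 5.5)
We will use the triangle inequality for $\W_p$ along the chain $\mu_n \to \lambda^K \to \lambda^{K-1}\to\cdots\to\lambda^0 = Z_n g$. The inequality \eqref{ineq:elementary} lets us split off the main term at cost $(1+\eps)$:
\[
\W_p^p(\mu_n, Z_n g) \leqslant (1+\eps)\W_p^p(\mu_n,\lambda^K) + c_p\eps^{1-p}\Big(\sum_{k=1}^{K}\W_p(\lambda^k,\lambda^{k-1})\Big)^p .
\]
Since $\mu_n(Q)=\kappa_Q\int_Q g$ for each $Q\in\D_K$, subadditivity gives $\W_p^p(\mu_n,\lambda^K)\leqslant \sum_{Q\in\D_K}\W_{p,Q}^p(\mu_n,\kappa_Q g)$. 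This produces the right-hand side of \eqref{ineq:mainterm}. It remains to show that $n^{p/d-1}\EE[(\sum_k \W_p(\lambda^k,\lambda^{k-1}))^p]\to 0$.

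For each $k$, the measures $\lambda^k$ and $\lambda^{k-1}$ have the same mass on every $Q\in\D_{k-1}$. By subadditivity,
\[
\W_p^p(\lambda^k,\lambda^{k-1})\leqslant\sum_{Q\in\D_{k-1}}\W_{p,Q}^p(\lambda^k,\lambda^{k-1}).
\]
On $Q$, both measures have densities bounded above and below (times $\kappa_Q$) and differ by a piecewise-constant multiplicative factor $\kappa_{Q'}/\kappa_Q$ on the $2^d$ children $Q'$. We then invoke the functional inequality from Appendix \ref{appendix:functional_inequalities}: the $p$-Wasserstein distance between two measures with densities bounded below on a cube is controlled by a negative Sobolev norm, $\W_{p,Q}^p \lesssim \ell_k^p\,\kappa_Q^{1-p}\int_Q|\text{density difference}|^p$. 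In our setting this gives
\[
\W_{p,Q}^p(\lambda^k,\lambda^{k-1})\lesssim \ell_k^{p}|Q|\,\kappa_Q^{1-p}\sum_{Q'\subset Q}|\kappa_{Q'}-\kappa_Q|^p .
\]

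Taking expectations requires concentration for $\kappa$. The quantity $\mu_n(Q)$ is a sum of $n$ i.i.d. bounded variables with mean $n\int_Q g\sim n|Q|$ and relative variance controlled by $v_Q\lesssim |Q|^{-1}$ from \eqref{bound_v}. The Bernstein-type estimates of Appendix \ref{appendix:concentration_inequalities} should then give $\EE|\kappa_{Q'}/\kappa_Q-1|^{p}\lesssim (n|Q|)^{-p/2}$, with $\kappa_Q\sim n$ on a high-probability event. The complement of that event has probability $e^{-c n|Q|}$, which is superpolynomially small since $n|Q|\gtrsim(\log n)^{\alpha d}$ for $Q\in\D_K$; on this bad event we fall back on the trivial bound \eqref{wasserstein_bound_bounded_domain}. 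Combining these, $\EE\W_p^p(\lambda^k,\lambda^{k-1})\lesssim n\,\ell_k^p (n\ell_k^d)^{-p/2}$. Hence
\[
n^{p/d-1}\EE\W_p^p(\lambda^k,\lambda^{k-1})\lesssim (n^{1/d}\ell_k)^{p(1-d/2)} .
\]
Because $d\geqslant 3$, this is geometric in $k$, and its largest value, at $k=K$, is $(\log n)^{-\alpha p(d/2-1)}\to0$. To handle the $p$-th power of the sum, we apply Hölder with weights $2^{-(K-k)\theta}$ for small $\theta>0$, which keeps the series summable. The result is $o(1)$ for every fixed $\eps$, and then $\eps\to0$ is not needed since \eqref{ineq:mainterm} holds with $(1+\eps)$.

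The main obstacle is the functional inequality on $Q$ for general $p$ (including $p>d/2$ or large $p$). Here one must ensure the densities stay bounded below, which is exactly why we need the high-probability event $\kappa_{Q'}\sim\kappa_Q$. The bad event must be absorbed using the logarithmic margin $\omega$ built into $K$; this is where the requirement $\alpha\geqslant 3$-type conditions enter.
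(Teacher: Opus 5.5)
Your proof is correct and follows essentially the paper's route. The shared steps are the telescoping chain $\mu_n\to\lambda^K\to\cdots\to\lambda^0=Z_ng$ with the $(1+\eps)$ split, subadditivity over parent cubes, Proposition \ref{lem:bb}, and concentration of $\kappa$ yielding $\EE\,\W_p^p(\lambda^k,\lambda^{k-1})\lesssim \ell_k^{p(1-d/2)}n^{1-p/2}$. Proposition \ref{lem:bb} only needs the lower bound $\kappa_Q\inf g$ on the coarse density, so the worry in your last paragraph is unnecessary, and the paper packages your good/bad-event split into Lemma \ref{lem:kappa_concentration}.

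The one real difference is that you control the $p$-th power of the sum by H\"older with geometric weights, where the paper uses the crude factor $K^p$ from \eqref{ineq:elementary_sum}. Your version removes the logarithmic loss, and with it the paper's restriction $\alpha>2/(d-2)$.
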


This lemma, whose proof is postponed to Section \ref{proofs_technical_lemmas}, reduces the problem to the study of local expectations like $\EE\lt[\W_{p,Q}^p(\mu_n , \kappa_{Q} g)\rt]$ for any fixed dyadic cube $Q$, which is what we do now. We will simply write $\gamma, \delta, \kappa$ instead of $\gamma_Q, \delta_Q, \kappa_Q$. On the dyadic cube $Q$, we will compare $\mu_n$ with the measures $\gamma \nu_n, \delta f$ and $\kappa g$ which all have the same mass $\mu_n(Q)$ on $Q$. By the triangle inequality, 
\begin{align*}
 \W_{p,Q}^p(\mu_n , \kappa g)\le \lt( \W_{p,Q}(\mu_n , \gamma \nu_n)+ \W_{p,Q}(\gamma \nu_n, \delta f)+ \W_{p,Q}( \delta f,\kappa g)\rt)^p.
\end{align*} 
It will appear thereafter that the leading contribution is the `flattened' term $\W_{p,Q}^p(\gamma \nu_n, \delta f)$,
 which transports a point measure with all weights equal to $\gamma$ towards the reference measure $\delta f$. 
Using \eqref{ineq:elementary} and averaging, we have
\begin{align}
 \nonumber \mathbb{E}[\W_{p,Q}^p(\mu_n , \kappa g)] &\leqslant (1+\eps) \mathbb{E}[\W_{p,Q}^p(\gamma \nu_n, \delta f)]+ \frac{c_p}{\eps^{p-1}}\lt( \mathbb{E}[\W_{p,Q}^p(\mu_n , \gamma \nu_n)]+ \mathbb{E}[\W_{p,Q}^p( \delta f,\kappa g)]\rt)\\
 &=: (1+\varepsilon)(\I) + \frac{c_p}{\varepsilon^{p-1}}[(\II) + (\III)].\label{threeterms}
\end{align}
We bound each of these three terms separately, and uniformly in $Q \in \D_K$. The proofs are again postponed to Section \ref{proofs_technical_lemmas}.

\begin{lemma}\label{lem:I}
    If $n$ is large enough, then 
    $$\forall Q \in \D_K, \qquad \mathbb{E}[\W_{p,Q}^p( \gamma \nu_n,\delta f)] \leqslant (1 + \varepsilon)^{3+p} \beta_{p,d} n^{1 - p/d} \int_{Q} g f^{-p/d}.$$
\end{lemma}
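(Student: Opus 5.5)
We need $\mathbb{E}[\W_{p,Q}^p(\gamma\nu_n,\delta f)]$ for a fixed $Q\in\D_K$. The plan is to condition on $N=\nu_n(Q)$ and then invoke the known asymptotics for unweighted empirical measures against a nearly uniform density, in the form \eqref{def:beta} (with its quantitative version from \cite{goldman2021convergence,goldman2024optimal}).

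\textbf{Step 1: conditioning on $N$.} Conditionally on $N$, the points of $\nu_n$ in $Q$ are $N$ i.i.d.\ samples with density $f\mathbf{1}_Q/\int_Q f$. We will write $\gamma\nu_n\restr Q=\gamma N\cdot\bar\nu$ and $\delta f\restr Q=\gamma N\cdot \bar f$, where $\bar\nu=N^{-1}\nu_n\restr Q$ and $\bar f=f\mathbf{1}_Q/\int_Q f$ are probability measures. The two measures have the same mass because $\delta\int_Q f=\mu_n(Q)=\gamma N$. By \eqref{wasserstein_linearity},
\[
\W_{p,Q}^p(\gamma\nu_n,\delta f)=\gamma N\,\W_p^p(\bar\nu,\bar f).
\]
The difficulty is that $\gamma$ is correlated with the positions of the points. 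However, $\gamma\le c$, and $\gamma$ concentrates around $\int_Q g/\int_Q f$: conditionally on $N$, it is an average of $N$ bounded i.i.d.\ weights, and $N\sim n\int_Q f\gtrsim(\log n)^{\alpha d}$. We will therefore split on the event $A=\{|\gamma-\int_Q g/\int_Q f|\le\eps\int_Q g/\int_Q f\}\cap\{|N-n\int_Q f|\le\eps n\int_Q f\}$. By Bernstein/Hoeffding (Appendix \ref{appendix:concentration_inequalities}), $A^c$ has probability $\le\exp(-c\eps^2(\log n)^{\alpha d})$, which is smaller than any power of $n$. On $A^c$ we use the crude bound \eqref{wasserstein_bound_bounded_domain}, $\W^p_{p,Q}\le \mathrm{diam}(Q)^p\mu_n(Q)\lesssim n$, so $A^c$ contributes negligibly compared with $n^{1-p/d}|Q|$. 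On $A$ we bound $\gamma N\le(1+\eps)^2 n\int_Q g$ and drop the indicator. This leaves $\mathbb{E}[\W_p^p(\bar\nu,\bar f)\mid N]$ for $N\ge(1-\eps)n\int_Q f$.

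\textbf{Step 2: the main obstacle, reducing to a uniform density.} Since $f$ is Lipschitz and bounded below, and $\ell_K\to0$, we have $\sup_Q f/\inf_Q f\le1+\eps$ for large $n$. We will compare $\bar f$ with the uniform density on $Q$. The plan is to write $\bar f\ge(1-\eps')\,\mathrm{unif}_Q$ and use a mixture representation: each sample is uniform with probability $1-\eps'$, and otherwise is drawn from the remainder. We then match the uniform points optimally, using the bound for $N'\approx(1-\eps')N$ uniform points given by \eqref{def:beta} after rescaling $Q$ to the unit cube (cost factor $\ell_K^p$). The extra $\eps' N$ points and the leftover mass $\eps'\bar f$ are transported crudely, at cost $\lesssim\eps'\ell_K^p$. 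That is still of order $\ell_K^p\gg N^{-p/d}\ell_K^p$, so it is too large. The alternative I would try first is the standard route of \cite{goldman2024optimal}: \cite[Theorem A.2]{goldman2024optimal} already treats Lipschitz densities bounded below on a cube, giving $\mathbb{E}\W_p^p\le(1+\eps)\beta_{p,d}N^{-p/d}\int \bar f^{1-p/d}$ for large $N$, uniformly over densities with ratio close to $1$. After rescaling $Q$ to unit size, $\bar f$ becomes a density with oscillation $\le\eps$, so uniformity holds. The result is
\[
\mathbb{E}[\W_p^p(\bar\nu,\bar f)\mid N]\le(1+\eps)\beta_{p,d}N^{-p/d}\ell_K^p=(1+\eps)\beta_{p,d}N^{-p/d}|Q|^{p/d}.
\]
If that uniformity is not directly available, I would reprove it via a Knothe/Dacorogna–Moser map $T$ pushing $\mathrm{unif}_Q$ to $\bar f$ with $\mathrm{Lip}(T)\le1+C\eps$. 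Pushing forward uniform samples gives samples of $\bar f$, and the cost increases only by the factor $(1+C\eps)^p$.

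\textbf{Step 3: assembling.} On $A$, $N^{-p/d}\le(1-\eps)^{-p/d}(n\int_Q f)^{-p/d}$. Combined with $\gamma N\le(1+\eps)^2n\int_Q g$, this yields
\[
(\I)\le(1+\eps)^{3+p}\beta_{p,d}\,n^{1-p/d}\Big(\int_Q g\Big)\Big(\int_Q f\Big)^{-p/d}|Q|^{p/d}.
\]
Finally, $(\int_Q f/|Q|)^{-p/d}\int_Q g\le(1+\eps)\int_Q g f^{-p/d}$ by the near-constancy of $f$ on $Q$. Adjusting $\eps$ gives the claimed bound, uniformly in $Q\in\D_K$, since all the error terms depend only on $c$, $p$ and $d$.
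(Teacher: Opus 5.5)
Your proposal is correct and follows essentially the paper's proof. You discard a bad event of super-polynomially small probability on which $\gamma$ or $\nu_n(Q)$ fails to concentrate, using the crude bound $\lesssim n$ there; on the good event you bound $\gamma$ (equivalently $\mu_n(Q)$) deterministically, condition on $\nu_n(Q)$, rescale $Q$, compare with the uniform case through a $(1+O(\ell_K))$-Lipschitz transport map, and conclude with the $\limsup$ defining $\beta_{p,d}$ and the near-constancy of $f$ on $Q$.

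Your Step 2 fallback is exactly the paper's use of Lemma \ref{lem:transport_map}, so the uniform-in-density version of \cite[Theorem A.2]{goldman2024optimal}, which is not clearly available, is unnecessary. The one point to make explicit is that the Lipschitz bound on $T$ comes from the rescaled density being $O(\ell_K)$-close to $1$ in $C^1$ (or H\"older) norm, not merely from its oscillation being small.
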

Due to Hypothesis \ref{hyp:W}, the term $\int_Q gf^{-p/d}$ has order $|Q|=(\log n)^{\alpha d}/n$, so the right-hand side is of the order of 
  $\ges n^{ - p/d}\log(n)^{\alpha d}$. The next lemmas show that $(\II)$ and $(\III)$ are negligible at this scale.

\begin{lemma}\label{lem:II}
    $\sup_{Q \in \D_K} \mathbb{E}[\W_{p,Q}^p( \delta f,\kappa g)] \ll n^{ - p/d} \log(n)^{\alpha d}. $
\end{lemma}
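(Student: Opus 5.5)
We bound the deterministic-measure term $\W_{p,Q}^p(\delta f,\kappa g)$ directly. Both measures are absolutely continuous on $Q$ with the same mass $\mu_n(Q)$. After factoring out this mass via \eqref{wasserstein_linearity}, we get
\[
\W_{p,Q}^p(\delta f,\kappa g)=\mu_n(Q)\,\W_{p,Q}^p\Big(\frac{f\mathbf 1_Q}{\int_Q f},\frac{g\mathbf 1_Q}{\int_Q g}\Big).
\]
The right-hand side compares two probability densities on the cube $Q$ of side $\ell_K$.

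The key point is that both densities are almost constant on $Q$. By Hypothesis \ref{hyp:W}, $|\nabla f|,|\nabla g|\le c$ and $f,g\ge c^{-1}$, so on $Q$
\[
\frac{f(x)}{\int_Q f}=\frac{1}{|Q|}\big(1+O(\ell_K)\big),
\]
and the same holds for $g$. Hence both are perturbations of the uniform density $|Q|^{-1}$ of relative size $O(\ell_K)$, and
\[
\Big\|\frac{f}{\int_Q f}-\frac{g}{\int_Q g}\Big\|_{L^\infty(Q)}\lesssim \ell_K|Q|^{-1}.
\]

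We then use a functional inequality for densities bounded below (presumably one of those in Appendix \ref{appendix:functional_inequalities}). For two densities on a cube of side $\ell$, both bounded below by a multiple of $|Q|^{-1}$, we have
\[
\W_p^p(\rho_1,\rho_2)\lesssim \ell^p\,|Q|^{1-p}\int_Q|\rho_1-\rho_2|^p.
\]
This follows, for instance, from the Peyre-type estimate $\W_p\lesssim \|\rho_1-\rho_2\|_{\dot W^{-1,p}}$ combined with Poincaré on the cube. Cruder alternatives also suffice: use $\W_p^p\le \mathrm{diam}(Q)^p\cdot\frac12\|\rho_1-\rho_2\|_{L^1}$ (keep the common part fixed and move the rest), or interpolate linearly. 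The crude bound already gives
\[
\W_{p,Q}^p\lesssim \ell_K^p\cdot\ell_K=\ell_K^{p+1}
\]
for the normalized measures.

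Finally, take expectations. Since $\mu_n(Q)$ is the only random factor, $\EE[\mu_n(Q)]=n\int_Q g\sim n|Q|$. Therefore
\[
\EE\,\W_{p,Q}^p(\delta f,\kappa g)\lesssim n|Q|\,\ell_K^{p+1}\sim (\log n)^{\alpha d}\ell_K^{p+1}.
\]
This is compared with the target $n^{-p/d}(\log n)^{\alpha d}$. The ratio is $\ell_K^{p+1}n^{p/d}=(\log n)^{\alpha p}\ell_K$, up to constants. Since $\ell_K\sim(\log n)^\alpha n^{-1/d}$, the ratio is $\sim(\log n)^{\alpha(p+1)}n^{-1/d}\to0$, uniformly in $Q\in\D_K$.

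The only step needing care is the normalization inside $\W_{p,Q}$, since \eqref{wasserstein_linearity} scales linearly with the mass. Also note the scaling used above: it is not $\ell_K^p|Q|^{1-p}\int|\rho_1-\rho_2|^p$ with the wrong normalization, but simply $\mathrm{diam}^p$ times the total variation. Mixing these up would produce incorrect powers of $|Q|$, so I would state the total-variation bound explicitly. There is no real obstacle beyond that.
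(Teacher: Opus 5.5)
Your proof is correct, but it gets the key estimate by a different and more elementary route than the paper.

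\textbf{Shared first step.} Both arguments begin the same way. You pull out the random mass $\mu_n(Q)$, whose mean is $n\int_Q g\sim n|Q|$, and are left with a deterministic transport problem on $Q$.

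\textbf{The paper's route.} It applies Proposition \ref{lem:bb} to $g$ and $(\delta/\kappa)f$, which gives a bound $\ell_K^p\int_Q|g-(\delta/\kappa)f|^p$. It then applies Poincar\'e--Wirtinger to the mean-zero function $g-(\delta/\kappa)f$, gaining a second factor $\ell_K^p$. The total is $n\ell_K^{2p}|Q|$.

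\textbf{Your route.} You use the gradient bound pointwise to get $\|\rho_1-\rho_2\|_{L^\infty(Q)}\les \ell_K|Q|^{-1}$. You then use the trivial coupling $\W_p^p\le \mathrm{diam}(Q)^p\,\dtv$, which gives $n\ell_K^{p+1}|Q|$. This is exactly the device the paper uses for Lemma \ref{lem:III}.

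\textbf{What each buys.} Your bound is weaker by a factor $\ell_K^{p-1}$. The coupling moves a fraction $\ell_K$ of the mass a distance $\ell_K$, whereas an efficient plan moves all the mass a distance of order $\ell_K^2$. It still suffices here, because $\ell_K^{p+1}n^{p/d}\sim(\log n)^{\alpha(p+1)}n^{-1/d}\to 0$ uniformly in $Q\in\D_K$. It would also suffice for the first bound of Lemma \ref{lem:wb0}, giving an error $C_L n^{-1/d}$ instead of $C_L n^{-p/d}$; both vanish as $n\to\infty$. So the paper's route gives the sharp second-order rate at the price of invoking Proposition \ref{lem:bb}, while yours is self-contained.

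\textbf{A correction to your aside.} The displayed inequality $\W_p^p(\rho_1,\rho_2)\les \ell^p|Q|^{1-p}\int_Q|\rho_1-\rho_2|^p$ has the wrong exponent. Since $\inf\rho_1\gtrsim |Q|^{-1}$, the factor $(\inf\rho_1)^{1-p}$ in Proposition \ref{lem:bb} is $\les |Q|^{p-1}$. With that correction the inequality gives $\ell_K^{2p}$ for the normalized measures, which is precisely the paper's bound. Your final argument does not use this inequality, so correctness is unaffected.
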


\begin{lemma}\label{lem:III}
    $\sup_{Q \in \D_K} \mathbb{E}[\W_{p,Q}^p( \mu_n,\gamma \nu_n)] \ll n^{ - p/d} \log(n)^{\alpha d}$.
\end{lemma}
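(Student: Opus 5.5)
The plan is a direct, deterministic bound, using only that the two measures sit on the same atoms. On a fixed $Q \in \D_K$, both $\mu_n\restr Q = \sum_{X_i \in Q} W_i \delta_{X_i}$ and $\gamma \nu_n \restr Q = \sum_{X_i\in Q} \gamma\, \delta_{X_i}$ are supported on the same points $\{X_i\}_{X_i \in Q}$. By the definition of $\gamma = \gamma_Q$ they have the same total mass $\mu_n(Q)$. If $\nu_n(Q)=0$ both measures vanish and there is nothing to prove.

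\textbf{Step 1: an explicit coupling.} Leave the common mass $\min(W_i,\gamma)$ at each atom $X_i$ in place, at zero cost. The excess masses $(W_i-\gamma)_+$ must then be sent to the deficits $(\gamma-W_i)_+$. These two families have equal total
\[
M_Q=\tfrac12\sum_{X_i\in Q}|W_i-\gamma|,
\]
and any coupling of them moves mass only inside $Q$. Using \eqref{wasserstein_bound_bounded_domain} on this residual part together with \eqref{diam}, we get
\[
\W_{p,Q}^p(\mu_n,\gamma\nu_n)\leqslant \mathrm{diam}(Q)^p M_Q \les \ell_K^p \sum_{X_i\in Q}|W_i-\gamma|.
\]

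\textbf{Step 2: smallness of the weight fluctuations.} The number $\gamma$ is the average of the $W_i$ over the points $X_i\in Q$, so it lies in $[\min_Q g/f,\max_Q g/f]$. By Hypothesis \ref{hyp:W}, $g/f$ is Lipschitz with constant $\les 1$: its gradient $\nabla g/f - g\nabla f/f^2$ is bounded in terms of $c$. Hence $|W_i-\gamma|\leqslant \mathrm{osc}_Q(g/f)\les \ell_K$ for every $X_i\in Q$. It follows that
\[
\W_{p,Q}^p(\mu_n,\gamma\nu_n)\les \ell_K^{p+1}\,\nu_n(Q),
\]
and this holds deterministically.

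\textbf{Step 3: take expectations.} We have $\EE[\nu_n(Q)]=n\int_Q f\les n|Q|\sim(\log n)^{\alpha d}$ by \eqref{eq:vol_DK}. Combining this with \eqref{eq:ell_K} gives, uniformly in $Q\in\D_K$,
\[
\EE[\W_{p,Q}^p(\mu_n,\gamma\nu_n)]\les n^{-(p+1)/d}(\log n)^{\alpha(p+1)}(\log n)^{\alpha d}
= n^{-p/d}(\log n)^{\alpha d}\cdot n^{-1/d}(\log n)^{\alpha(p+1)}.
\]
The last factor tends to $0$, which proves the lemma.

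There is no real obstacle in this argument. The only point to check is that the Lipschitz bound on $g/f$ follows from Hypothesis \ref{hyp:W}, and it does: $1/f$ is bounded and $\nabla f,\nabla g$ are bounded, so no probabilistic estimate on $\gamma$ is needed.
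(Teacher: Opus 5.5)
Your proof is correct and follows essentially the paper's route. You bound the cost by $\mathrm{diam}(Q)^p$ times the total-variation mass $\tfrac12\sum_{X_i\in Q}|W_i-\gamma|$, use that $g/f$ is Lipschitz to get $|W_i-\gamma|\lesssim \ell_K$, and take expectations to reach $n\ell_K^{p+1}|Q|$. The only cosmetic difference is that you bound $|W_i-\gamma|$ directly by the oscillation of $g/f$ on $Q$, whereas the paper goes through the pairwise sum $\nu_n(Q)^{-1}\sum_{X_j\in Q}|W_i-W_j|$.
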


\subsection{Proof of the main theorem}\label{sec:proofmain}

These lemmas together with \eqref{threeterms} imply that for $n$ large enough, 
\begin{equation} 
    \forall Q \in \D_K, \quad \mathbb{E}[\W_{p,Q}^p(\mu_n , \kappa g)] \leqslant (1+\eps)^{4+p} \beta_{p,d}n^{1 - p/d} \int_{Q} g f^{-p/d}.
\end{equation}
Summing over all $Q \in \D_K$ as in the right-hand side of \eqref{ineq:mainterm} then gives
$$\limsup_{n \to \infty} \sum_{Q \in \D_K} n^{p/d - 1}\mathbb{E}[\W_{p,Q}^p(\mu_n , \kappa_Q g)] \leqslant (1+\varepsilon)^{4+p} \beta_{p,d}\int_{(0,1)^d} g f^{-p/d}.$$
This is valid for all $\varepsilon > 0$, so that we obtain \eqref{eq:main_1}. To obtain \eqref{eq:main_0}, we note that 
$$\W_{p}^p(\hat{g}_n, g) = Z_n^{-1} \W_{p}^p(\mu_n, Z_n g) = (nZ_n^{-1}) n^{-1}\W_{p}^p(\mu_n, Z_n g).$$

Lemma \ref{lem:kappa_concentration} with $Q = (0,1)^d$ shows that $\mathbb{P}(|Z_n / n - 1| > \varepsilon) \leqslant 2e^{-nC}$ for some constant $C$ depending on $\varepsilon$. Therefore, using $\W_{p,Q}(\mu_n, Z_n g)^p \les Z_n\les n$, 
we get
\begin{align*}
    \mathbb{E}[\W_{p}^p(\hat{g}_n, g)] &\leqslant (1+\varepsilon)n^{-1}\mathbb{E}[\W_{p}^p(\mu_n, Z_n g)] 
    + C\mathbb{E}[n\mathbf{1}_{|Z_n / n - 1| > \varepsilon}]\\
    &\leqslant (1+\varepsilon)n^{-1}\mathbb{E}[\W_{p}^p(\mu_n, Z_n g)] + Ce^{-nC}.
    \end{align*}
    The final conclusion \eqref{eq:main_0} follows from \eqref{eq:main_1}.

    \section{Proof of the lower bound}\label{sec:lowerbound}

\subsection{The Boundary Wasserstein distance}

For a bounded convex set $\Omega\subset \R^d$ and two positive measures $\mu, \nu$ we define the boundary Wasserstein distance as
\begin{equation}\label{defWb}
 \Wb_{p,\Omega}(\mu,\nu)=\left(\inf_{\Pi\in  \mathscr{C}_\Omega(\mu, \nu)}\int_{\overline{\Omega}\times\overline{\Omega}}|x-y|^p d\Pi(x,y)\right)^\frac{1}{p}
\end{equation}
where $\mathscr{C}_\Omega(\mu, \nu)$ is the set of couplings $\Pi$ such that $\Pi_1\restr \Omega=\mu\restr \Omega$ and $\Pi_2\restr \Omega=\nu\restr\Omega$, see \cite{figalli2010new}.
 Let us point out that as opposed to $\W_p$, we do not need to require $\mu(\Omega)=\nu(\Omega)$ 
for $\Wb_{p,\Omega}(\mu,\nu)$ to be well-defined. However, if both measures have the same mass, we always have

\begin{equation}\label{ineq:wb-w}
    \Wb_{p,\Omega}^p(\mu,\nu)\leqslant \W_p(\mu,\nu)^p.
\end{equation}
As shown in, e.g., \cite{caglioti2024subadditivity,huesmann2024asymptotics,goldman2024optimal}, the boundary Wasserstein distance is super-additive: for any collection of disjoint bounded convex sets $A_1, \ldots, A_k \subset \Omega$,
\begin{equation}\label{eq:superad}
 \Wb_{p,\Omega}^p(\mu,\nu)\geqslant \sum_{i=1}^k \Wb_{p,A_i}^p(\mu,\nu).
\end{equation}

\subsection{The lower constant}

We defined $\beta_{p,d}$ in \eqref{def:beta}. We may define the constant $\bD$ in a similar way, using the boundary Wasserstein distance. 
For an integer $n$ and a family of iid random variables $X_i$ uniformly distributed in $Q_0=(0,1)^d$ we set
\begin{equation}\label{defFb}
 Fb_p(n)=n^{p/d}\EE\sqa{\Wb_{p,Q_0}^p\left(\frac{1}{n}\sum_{i=1}^n \delta_{X_i},1\right)}.
\end{equation}
Then, from \cite{caglioti2024subadditivity}, the following limit exists and belongs to $]0, \infty[$:
\begin{equation}\label{def:betalow}
 \bD=\lim_{n\to \infty} Fb_p(n).
\end{equation}

\subsection{The lower bound}

 We aim to prove that under Hypothesis \ref{hyp:W} on $f$ and $g$, we have
\begin{equation}\label{mainclaimlower}
 \liminf_{n\to \infty} n^{p/d-1} \EE\sqa{\W_{p}^p(\mu_n,Z_ng)}\geqslant \bD \int_{(0,1)^d} g f^{-p/d}.
\end{equation}
While we could in principle argue along the same lines as for the upper bound \eqref{eq:main_1}, as shown in \cite[Theorem A.5]{caglioti2024subadditivity} the lower bound \eqref{mainclaimlower} is actually simpler. Fix $k\in \N$ with  $k\gg1$  and set $L=2^k$. For $n\in \N$ we set 
\[
K = \lfloor d^{-1} \log_2 n \rfloor
\]
which is not exactly the same as in \eqref{def:K}, 
and we define
\[
  \ell= L 2^{-K}=2^{k-K}\sim L n^{-1/d}.
\]
We then consider the partition $\D$ of dyadic cubes with side length $\ell$. Using \eqref{ineq:wb-w} and the super-additivity of $\Wb_{p,Q_0}$, we have
\begin{equation}\label{superad}
 \EE\sqa{\W_{p}^p(\mu_n,Z_ng)}\geqslant \sum_{Q\in \D} \EE\sqa{\Wb_{p,Q}^p(\mu_n,Z_ng)}.
\end{equation}
We now bound from below $\EE\sqa{\Wb_{p,Q}(\mu_n,Z_ng)^p}$ for every fixed $Q$. We recall that $\nu_n=\sum_{i=1}^n \delta_{X_i}$ and that $\kappa=\kappa_Q, \gamma=\gamma_Q, \delta=\delta_Q$ were defined in \eqref{def:kappa_gamma_delta}. We also introduce
\[
  \lambda=Z_n\frac{\delta}{\kappa}=Z_n \frac{\int_Q g}{\int_Q f}.
\]
By Young's inequality and $\Wb_{p,Q}\le \W_{p,Q}$ for measures with the same mass, we have for $\eps\in(0,1)$,
\begin{equation}\label{splitDir}
 \Wb_{p,Q}^p(\mu_n,Z_ng)\geqslant (1-\eps)\Wb_{p,Q}^p(\gamma\nu_n,\lambda f) -\frac{C}{\eps^{p-1}}\left(\W_{p,Q}^p(\mu_n,\gamma \nu_n) + \W_{p,Q}^p(\lambda f,Z_ng)\right).
\end{equation}
Notice that the main difference with \eqref{threeterms} is that here $\gamma \nu_n$ and $\lambda f$ do not have the same mass.\\
We first state the estimates for  the last two terms, see Section \ref{proofs_technical_lemmas} for the proofs. 

\begin{lemma}\label{lem:wb0}
$ \EE\sqa{\W_{p,Q}^p(\lambda f,Z_ng)}\les n \ell^{2p} |Q|$ and 
$\EE\sqa{\W_{p,Q}(\mu_n,\gamma \nu_n)^p}\les \ell^{p+1} n |Q|$.
\end{lemma}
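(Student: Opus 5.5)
Both bounds are deterministic in the geometry of $Q$ (a cube of side $\ell$, with $|Q|=\ell^d$ and $\ell\sim Ln^{-1/d}$ small) combined with crude moment bounds on the random masses. Throughout, $\ell\to0$ as $n\to\infty$ for fixed $L$.

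\emph{First estimate.} The measures $\lambda f$ and $Z_n g$ have the same mass on $Q$, namely $Z_n\int_Q g$. By \eqref{wasserstein_linearity},
\[
\W_{p,Q}^p(\lambda f,Z_ng)=Z_n\,\W_{p,Q}^p\Big(\tfrac{\int_Q g}{\int_Q f}f,\,g\Big).
\]
The two densities $\tfrac{\int_Q g}{\int_Q f}f$ and $g$ on $Q$ have equal mass, are bounded above and below by Hypothesis \ref{hyp:W}, and differ by $O(\ell)$ in sup norm. Indeed, $g/f$ is Lipschitz, so on $Q$ we have $g=(g/f)f$ with $g/f=\tfrac{\int_Q g}{\int_Q f}+O(\ell)$. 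I would then use the standard functional inequality from Appendix \ref{appendix:functional_inequalities}: for densities $\rho_1,\rho_2\geqslant c^{-1}$ on a cube of side $\ell$,
\[
\W_{p,Q}^p(\rho_1,\rho_2)\les \ell^p|Q|\,\|\rho_1-\rho_2\|_\infty^p .
\]
This can be proved via a Dacorogna–Moser flow or by comparing both densities to their average through the Neumann problem. It gives $\W_{p,Q}^p\les \ell^{2p}|Q|$ (indeed $\ell^{p}\cdot\ell^p|Q|$). Multiplying by $Z_n\leqslant cn$ yields $\les n\ell^{2p}|Q|$ deterministically, hence also in expectation.

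\emph{Second estimate.} Here $\mu_n=\sum_{X_i\in Q}W_i\delta_{X_i}$ and $\gamma\nu_n=\sum_{X_i\in Q}\gamma\,\delta_{X_i}$ live on the same atoms, with $\gamma$ the average weight on $Q$. Every weight $W_i=(g/f)(X_i)$ with $X_i\in Q$ lies within $O(\ell)$ of $\gamma$, again by the Lipschitz property of $g/f$. I would build a coupling that keeps the common mass $\min(W_i,\gamma)$ at each atom (zero cost) and transports the excess. The total excess is $\tfrac12\sum_i|W_i-\gamma|\les \ell\,\nu_n(Q)$, and each unit of moved mass costs at most $\mathrm{diam}(Q)^p\les\ell^p$. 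Hence
\[
\W_{p,Q}^p(\mu_n,\gamma\nu_n)\les \ell^{p+1}\nu_n(Q),
\]
and taking expectations with $\EE[\nu_n(Q)]=n\int_Qf\les n|Q|$ gives the claim.

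The main obstacle is the deterministic density-comparison inequality in the first estimate. One must get the sharp scaling $\ell^p|Q|\,\|\cdot\|_\infty^p$, rather than a bound like the total-variation one $\ell^p\|\rho_1-\rho_2\|_{L^1}\sim\ell^{p+1}|Q|$. That weaker bound would give only $n\ell^{p+1}|Q|$, which still suffices for the later use, so I would fall back on it if the appendix inequality is not available in the needed form.
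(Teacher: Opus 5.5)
Your proof is correct and follows the paper's argument. The first bound uses linearity of the cost plus a density-comparison estimate, and the second uses the coupling that keeps the common mass in place together with the Lipschitz bound on $g/f$, giving $\W_{p,Q}^p(\mu_n,\gamma\nu_n)\les \ell^{p+1}\nu_n(Q)$.

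The inequality you flag as the main obstacle is not one. It is exactly Proposition \ref{lem:bb} combined with $\int_Q|h|^p\leqslant |Q|\,\|h\|_\infty^p$, and your sup-norm bound via the Lipschitz continuity of $g/f$ simply replaces the paper's Poincar\'e--Wirtinger step.
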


\begin{proof}
    For the first part, we argue as in the proof of Lemma \ref{lem:II}: 
\begin{equation}\label{IIb}
 \EE\sqa{\W_{p,Q}^p(\lambda f,Z_ng)}=\EE\sqa{Z_n} \W_{p,Q}^p(\delta/\kappa f,g)\les n \ell^{2p} |Q|.
\end{equation}
For the second part, we argue exactly as in the proof of Lemma \ref{lem:III}. 
\end{proof}

We now state the estimate for the first and main term, see again Section \ref{proofs_technical_lemmas} for the proof.
\begin{lemma}\label{lem:wb1}
    
For $n$ large enough, we have
\begin{equation}\label{claimmaintermlower}
 n^{p/d-1}\EE\sqa{\Wb_{p,Q}^p(\gamma\nu_n,\lambda f)}\geqslant (1-\eps)(1-o_{L}(1))  \bD \int_{Q} g f^{-p/d} -C_\eps o_L(1) |Q|,
\end{equation}
where $\lim_{L\to \infty } o_L(1)=0$. 
\end{lemma}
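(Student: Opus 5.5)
We reduce to the uniform setting by conditioning on the number of points in $Q$ and freezing the densities at the centre $x_Q$ of $Q$. Conditionally on $N=\nu_n(Q)$, the points of $Q$ are i.i.d. with density $f\mathbf{1}_Q/\int_Q f$. Since $|\nabla f|\les 1$ and $f\ges 1$, this density is within a factor $1+O(\ell)$ of the uniform density on $Q$. The weight $\gamma$ is deterministic given the configuration, and $\lambda f$ is within a factor $1+O(\ell)$ of the constant $\lambda f(x_Q)$.

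\emph{Step 1: replace $f$ by a constant.} We use the Lipschitz dependence of $\Wb$ with respect to perturbations of a density by a multiplicative factor $1+O(\ell)$. Concretely, we would write $\lambda f = \lambda f(x_Q)(1+h)$ with $|h|\les \ell$. The extra mass $\lambda f(x_Q) h$ can be sent to or from the boundary at cost $\les n\ell^{1+p}|Q|$, which after multiplication by $n^{p/d-1}$ is $\les L^{p+1}n^{-1/d}|Q| = o(|Q|)$ for fixed $L$. In the same way, the law of the points can be coupled with uniform points on $Q$, up to a set of points of proportion $O(\ell)$, which we treat as moved to the boundary.

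\emph{Step 2: scaling.} For uniform points and a constant density, we rescale $Q$ to $Q_0$. Let $\rho = \lambda f(x_Q)|Q|/(\gamma N)$ be the mass ratio. By $1$-homogeneity \eqref{wasserstein_linearity} and scaling of lengths,
\[
\Wb_{p,Q}^p(\gamma\nu_n,\lambda f(x_Q)) = \gamma N \ell^p\, \Wb_{p,Q_0}^p\Big(\tfrac1N\textstyle\sum\delta_{Y_i},\rho\Big).
\]
By concentration (Lemma \ref{lem:kappa_concentration} together with Bernstein's inequality for $N$), with overwhelming probability
\[
N = n\int_Q f\,(1+O(L^{-d/2}\log))=nf(x_Q)|Q|(1+o_L(1)),\qquad \gamma = g(x_Q)/f(x_Q)(1+o(1)),\qquad \rho = 1+o_L(1).
\]

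\emph{Step 3: remove the mismatch $\rho\neq1$.} This is the main obstacle, since $\Wb$ is not directly comparable across different masses. Since $\Wb$ on $Q_0$ can dump any excess at cost $\les 1$ per unit mass, we get $\Wb^p(\mu,\rho)\ge(1-\eps)\Wb^p(\mu,1)-C_\eps|\rho-1|$. The error term, once multiplied by the prefactor $\gamma N\ell^p n^{p/d-1}\sim |Q|L^{p}$, must be handled carefully: $\EE|\rho-1|\les N^{-1/2}\sim L^{-d/2}$, so the contribution is $C_\eps L^{p-d/2}|Q|$. This is $o_L(1)|Q|$ only if $p<d/2$. In general, I would instead follow \cite{caglioti2024subadditivity} and bound the cost of the boundary-layer mismatch by using the boundary freedom only in a layer of width $L^{-1}$, giving a term $\les |\rho-1|\cdot(\text{layer width})^p$ at the rescaled level. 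Optimizing the width should yield $o_L(1)$. This is the step I would expect to fight with.

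\emph{Conclusion.} Finally, $Fb_p(N)\ge(1-o_L(1))\bD$ by \eqref{def:betalow}. Collecting the factors gives
\[
n^{p/d-1}\gamma N\ell^pN^{-p/d} = (n/N)^{p/d}\,\gamma\, (N/n)\,\ell^p \approx g(x_Q)f(x_Q)^{-p/d}|Q|,
\]
which matches $\int_Q gf^{-p/d}$ up to $1+O(\ell)$. On the exceptional event where the concentration estimates fail, we simply bound the cost below by $0$. Multiplying through by $(1-\eps)$ gives \eqref{claimmaintermlower}.
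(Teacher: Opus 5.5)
Your Steps 1 and 2 and the final bookkeeping are sound, and they follow the same skeleton as the paper. The paper removes the non-uniformity of $f$ with the boundary-preserving $(1+\eps)$-bi-Lipschitz map of Lemma \ref{lem:transport_map}, losing only a factor $(1-\eps)^p$. Your perturbative replacement instead costs an additive $O(L^{p+1}n^{-1/d})|Q|$, which is harmless for fixed $L$ and $n$ large.

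The genuine gap is Step 3, which you leave open, and the fix you sketch does not work as stated. Removing the excess mass only inside a boundary layer of width $w$ depletes the layer and leaves the bulk at density $\rho$. So it never produces the constant density $1$ and gives no bound on $\Wb_{p,Q_0}(\rho,1)$. Any scheme that ships the excess $|\rho-1|$ straight to $\partial Q_0$ pays a cost linear in $|\rho-1|$, since most of the mass sits at distance of order $1$ from the boundary. That linear cost is exactly what produced your $L^{p-d/2}$.

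The missing idea is that two \emph{constant} densities of different mass are close in $\Wb$ to order $|\rho-1|^p$, not $|\rho-1|$, because the mismatch can be absorbed by a small global displacement. Take $\rho\geqslant 1$; the case $\rho<1$ is symmetric. The dilation $T(x)=x_0+s(x-x_0)$ about the centre $x_0$ of $Q_0$, with $s^d=\rho$, pushes $\rho\mathbf{1}_{Q_0}$ to $\mathbf{1}_{sQ_0}$. Composing with the projection onto $\overline{Q_0}$ sends the part outside $Q_0$ to $\partial Q_0$. Every point moves by $\les s-1\les|\rho-1|$, whence $\Wb^p_{p,Q_0}(\rho,1)\les|\rho-1|^p$ for $\rho\sim 1$.

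This is \cite[Lemma 2.5]{caglioti2024subadditivity}. The paper uses it in the unscaled form $\Wb^p_{p,Q}(N/|Q|,M/|Q|)\les\ell^p|N-M|^p/M^{p-1}$ with $M=Z_n\gamma^{-1}\int_Q g$; your $\rho$ is $M/N$ up to a deterministic factor $1+O(\ell)$. Plug this into your triangle/Young step. Then use $\EE|N-M|^p\les (n|Q|)^{p/2}=L^{dp/2}$, which follows from $|N-M|\les|\mu_n(Q)-\EE\mu_n(Q)|+|Z_n-\EE Z_n|\,|Q|$ and the moment bounds, on the event $N\sim n|Q|$. The error becomes $C_\eps|Q|L^p\bigl(L^{-dp/2}+\ell^p\bigr)=C_\eps\bigl(L^{-p(d/2-1)}+L^{2p}n^{-p/d}\bigr)|Q|$. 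This is $o_L(1)|Q|$ for $n$ large and every $p\geqslant 1$ precisely because $d\geqslant 3$. Without this $p$-th power estimate your argument only covers $p<d/2$.
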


Combining the results of both lemmas with \eqref{splitDir} and summing over $Q$ yields
\[
 n^{p/d-1}\EE\sqa{\W_{p}^p(\mu_n,Z_ng)}\geqslant (1-\eps)(1-o_{L}(1))  \bD \int_{Q_0} g f^{-p/d} -C_\eps \lt( o_L(1) + C_L(n^{-p/d}+n^{-1/d} )\rt).
\]
Sending first $n\to \infty$ we get
\[
 \liminf_{n\to \infty} n^{p/d-1}\EE\sqa{\W_{p}^p(\mu_n,Z_ng)}\geqslant (1-\eps)(1-o_{L}(1))  \bD \int_{Q_0} g f^{-p/d} -C_\eps o_L(1).
\]
Sending then $L\to \infty$ and finally $\eps\to 0$ concludes the proof of \eqref{mainclaimlower}. Once \eqref{mainclaimlower} is established, we can deduce that 
\begin{equation}\label{eq:main_0}
 \limsup_{n \to \infty} n^{p/d} \mathbb{E}\W_p^p\left({Z_n}^{-1}\mu_n, g\right) \leqslant \beta_{p,d}\int_{(0,1)^d} g(x) f(x)^{-p/d} \mathrm{d} x
\end{equation}
using the same arguments as in the proof of \eqref{eq:main_0} in Section \ref{sec:proofmain}.

    \section{Proofs of the technical lemmas}\label{proofs_technical_lemmas}

    \subsection{Proof of Lemma \ref{lem:reduction_to_maximal_scale}}

We start by writing a multi-scale decomposition of $\W_p^p(\mu_n, Z_n g)$. 
    
    \begin{lemma}\label{lem:1}For any $p\geqslant1$ and $0< \varepsilon\leqslant 1$, there is a constant $c_p$ such that
    \begin{equation}\label{msdecomp}\W_p^p(\mu_n, Z_n g)\leqslant (1 + \varepsilon)\sum_{Q \in \D_K}
         \W_{p, Q}^p(\mu_n, \kappa_Q g)+ \underbrace{\frac{c_p K^p}{\varepsilon^{p-1}}\sum_{k=0}^{K-1}
          \W_p^p(\lambda^k,\lambda^{k+1})}_{=:E}.
    \end{equation}
    \end{lemma}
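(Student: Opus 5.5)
We prove the multiscale decomposition by combining the triangle inequality along the chain of intermediate measures with subadditivity at the finest scale. Since $\kappa_{Q^0}=Z_n$ and $\D_0=\{Q^0\}$, we have $\lambda^0=Z_ng$. The measures $\lambda^k$ all have total mass $\mu_n(Q^0)=Z_n$, because $\lambda^k(Q)=\kappa_Q\int_Q g=\mu_n(Q)$ for every $Q\in\D_k$. Hence all the Wasserstein distances below are between measures of equal mass. The triangle inequality for $\W_p$ then gives
\[
\W_p(\mu_n,Z_ng)\leqslant \W_p(\mu_n,\lambda^K)+\sum_{k=0}^{K-1}\W_p(\lambda^k,\lambda^{k+1}).
\]

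Next we raise this to the power $p$ using \eqref{ineq:elementary} with $a=\W_p(\mu_n,\lambda^K)$ and $b=\sum_{k<K}\W_p(\lambda^k,\lambda^{k+1})$. This yields
\[
\W_p^p(\mu_n,Z_ng)\leqslant (1+\eps)\W_p^p(\mu_n,\lambda^K)+c_p\eps^{1-p}b^p .
\]
We then bound $b^p$ by \eqref{ineq:elementary_sum} with $K$ terms: $b^p\leqslant K^p\sum_{k=0}^{K-1}\W_p^p(\lambda^k,\lambda^{k+1})$. This produces exactly the error term $E$, with the factor $K^p$. (Hölder would give the sharper factor $K^{p-1}$, but $K^p$ is what is stated and suffices.)

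It remains to handle the first term. For each $Q\in\D_K$, the restriction $\lambda^K\restr Q=\kappa_Q g\restr Q$ has mass $\mu_n(Q)$, so $\mu_n$ and $\lambda^K$ have equal mass on every cell of the partition $\D_K$. The subadditivity property of the Wasserstein cost then gives
\[
\W_p^p(\mu_n,\lambda^K)\leqslant\sum_{Q\in\D_K}\W_{p,Q}^p(\mu_n,\kappa_Qg),
\]
which concludes the proof.

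The only points needing care are the following.
\begin{itemize}
\item The mass bookkeeping: each $\lambda^k$ has total mass $Z_n$, and masses agree cell by cell at scale $K$.
\item Cubes with $\mu_n(Q)=0$: there $\kappa_Q=0$, both measures vanish, and the term is zero.
\item Boundaries: the dyadic cubes are half-open up to null sets, and $\mu_n$ almost surely charges no cube boundary, so $\D_K$ is a genuine partition for both measures.
\end{itemize}
None of these is a real obstacle; the lemma is essentially a formal consequence of the triangle inequality, \eqref{ineq:elementary}, \eqref{ineq:elementary_sum} and subadditivity.
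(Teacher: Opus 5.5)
Your proposal is correct and follows the paper's own proof: the triangle inequality along the chain $\mu_n,\lambda^K,\dots,\lambda^0=Z_ng$, then \eqref{ineq:elementary} to isolate the finest scale, subadditivity over $\D_K$ for the first term, and \eqref{ineq:elementary_sum} for the sum, which yields $E$. Your explicit mass bookkeeping ($\lambda^k(Q)=\mu_n(Q)$ for all $Q\in\D_k$) is left implicit in the paper, and you are right that H\"older's inequality would give the sharper factor $K^{p-1}$.
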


    \begin{proof}We use the triangle inequality and \eqref{ineq:elementary}:
    \begin{align*}
    \W_{p}(\mu_n, Z_n g)&\leqslant \lt(\W_{p}(\mu_n , \lambda^K)+ \sum_{k=0}^{K-1} \W_{p}(\lambda^k,\lambda^{k+1})\rt)^p\\
     &\leqslant (1+\eps) \W_{p}^p(\mu_n , \lambda^K) + \frac{c_p}{\eps^{p-1}} \lt( \sum_{k=0}^{K-1} \W_{p}(\lambda^k,\lambda^{k+1})\rt)^p.
    \end{align*}
    By subadditivity of the Wasserstein cost, $\W_{p}^p(\mu_n , \lambda^K)\le  \sum_{Q \in \D_K} \W_{p,Q}^p(\mu_n , \kappa_{Q} g)$, which settles the first term. The second term comes from \eqref{ineq:elementary_sum}.
    
    \end{proof}
    
    To prove Lemma \ref{lem:reduction_to_maximal_scale}, we only need to prove that the error term $E$ in \eqref{msdecomp} is negligible, that is 
        \begin{equation}\label{eq:E}
            \limsup_{n \to \infty} n^{p/d - 1}E = 0.
        \end{equation}

The proof relies on the following estimate.

\begin{lemma}\label{lem:pre-E}For all $k \in \{0, \ldots, K-1\}$,
    \begin{eqnarray}
        \W_{p}^p(\lambda^k,\lambda^{k+1}) \les \ell_k^{p(1-d/2)} n^{1-\frac{p}{2}} \int_{Q_0} g^p.
    \end{eqnarray}
\end{lemma}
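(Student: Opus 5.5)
We read the bound as holding in expectation, i.e. for $\EE[\W_{p}^p(\lambda^k,\lambda^{k+1})]$: the $n^{1-p/2}$ factor is a fluctuation scale. The plan is to work cube by cube at scale $k$. For $Q\in\D_k$, both $\lambda^k\restr Q=\kappa_Q g\restr Q$ and $\lambda^{k+1}\restr Q=\sum_{Q'}\kappa_{Q'}g\restr Q'$ have mass $\mu_n(Q)$. Here $Q'$ ranges over the $2^d$ children of $Q$ in $\D_{k+1}$. Subadditivity therefore gives
\[
\W_p^p(\lambda^k,\lambda^{k+1})\leqslant \sum_{Q\in\D_k}\W_{p,Q}^p\Big(\kappa_Q g,\sum_{Q'}\kappa_{Q'}g\restr Q'\Big).
\]
It is then enough to show that each term has expectation $\les \ell_k^{p}\,(n\ell_k^{d})^{1-p/2}\int_Q g^p$ and to sum over $Q$, since $\ell_k^{p}(n\ell_k^d)^{1-p/2}=\ell_k^{p(1-d/2)}\,\ell_k^{d}\,n^{1-p/2}$ and $\ell_k^{d}\int_Q g^p\les\int_Q g^p$. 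In fact the per-cube bound gives the slightly stronger factor $\ell_k^d$, which only helps.

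\textbf{Local estimate.} On a cube of side $\ell$, I will use the standard functional inequality from Appendix \ref{appendix:functional_inequalities}:
\[
\W_{p,Q}^p(\rho_1,\rho_2)\les \ell^p\int_Q \frac{|\rho_1-\rho_2|^p}{\rho_1^{p-1}},
\]
valid for densities of equal mass with $\rho_1$ bounded below on $Q$. It follows from a Neumann-problem/Benamou--Brenier argument together with the Poincaré/Sobolev inequality on the cube. Take $\rho_1=\kappa_Q g$, so that $\rho_1\geqslant \kappa_Q/c$, and note that $|\rho_1-\rho_2|=|\kappa_Q-\kappa_{Q'}|\,g$ on $Q'$. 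This yields
\[
\W_{p,Q}^p\les \ell_k^p\,\kappa_Q^{1-p}\sum_{Q'}|\kappa_Q-\kappa_{Q'}|^p\int_{Q'}g^p.
\]

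\textbf{Good event and moments.} Let $G$ be the event that $\kappa_Q\geqslant n/2$ for every cube $Q$ of the scales $\D_k,\D_{k+1}$. Since $k<K$, we have $n|Q|\ges(\log n)^{\alpha d}$, so Lemma \ref{lem:kappa_concentration} and a union bound over at most $2n$ cubes give $\mathbb{P}(G^c)\les n e^{-c(\log n)^{\alpha d}}$. On $G$, the factor $\kappa_Q^{1-p}$ is $\les n^{1-p}$. Write $\kappa_{Q'}-\kappa_Q=(\kappa_{Q'}-n)-(\kappa_Q-n)$. Each $\mu_n(A)-n\int_A g$ is a centered sum of i.i.d.\ bounded variables $W_i\mathbf 1_{X_i\in A}$ with variance $\les n|A|$ (this is \eqref{bound_v}). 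Rosenthal's (or Marcinkiewicz--Zygmund) inequality from Appendix \ref{appendix:concentration_inequalities}, using $n|A|\ges1$, then gives
\[
\EE\,|\kappa_A-n|^p\les (n|A|)^{p/2}|A|^{-p}=n^{p/2}|A|^{-p/2}.
\]
Plugging this in gives the per-cube bound $\les \ell_k^p\,n^{1-p}\,n^{p/2}\ell_k^{-dp/2}\int_Q g^p$, which is exactly the claimed size.

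\textbf{Bad event.} On $G^c$, I use the crude bound \eqref{wasserstein_bound_bounded_domain}: $\W_p^p(\lambda^k,\lambda^{k+1})\les Z_n\les n$. Its contribution is therefore $\les n^2e^{-c(\log n)^{\alpha d}}$, which is super-polynomially small and hence dominated by the right-hand side. This needs $\int g^p\ges1$, which holds by Hypothesis \ref{hyp:W}.

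The main obstacle is justifying the weighted functional inequality with a lower-density factor for general $p\geqslant1$, and making sure that it is the lower bound on $\kappa_Q g$ (a deterministic shape times a random constant) that enters. If $\kappa_Q$ were allowed to be small, the bound would degenerate, which is exactly why the good event $G$ is introduced. The moment bound itself is routine once we are in the regime $n|Q|\gg1$.
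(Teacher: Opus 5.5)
You are right to read the bound in expectation, which is what the paper's proof actually establishes. Your argument is correct and is essentially the paper's: subadditivity over $\D_k$, then Proposition \ref{lem:bb} with $\inf_Q \kappa_Q g\gtrsim \kappa_Q$, then $p$-th moment bounds $\EE|\kappa_R-n|^p\les n^{p/2}|R|^{-p/2}$ (equivalently $v_R\les |R|^{-1}$).

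The only variation is how you handle the factor $\kappa_Q^{1-p}$. You use a global good event $G$, with the crude bound $\W_p^p\les Z_n$ on $G^c$. The paper instead writes the integrand as $\kappa_Q|\kappa_R/\kappa_Q-1|^p$ and invokes Lemma \ref{lem:kappa_concentration}, whose proof performs the same split cube by cube. Note that your super-polynomial bound on $\mathbb{P}(G^c)$ must come from the Bennett tail \eqref{i:kappa}, not from Lemma \ref{lem:kappa_concentration}. That lemma is a moment estimate, and moment estimates only give polylogarithmic tails per cube, which would not survive the union bound over $\sim n(\log n)^{-\alpha d}$ cubes.

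There is also a bookkeeping slip. Your computation gives the per-cube bound $\ell_k^{p(1-d/2)}n^{1-p/2}\int_Q g^p$. This is $\ell_k^{-d}$ times your announced target $\ell_k^{p}(n\ell_k^{d})^{1-p/2}\int_Q g^p$, so it is not ``exactly the claimed size''. That target double-counts the volume, since $\int_Q g^p\sim\ell_k^d$ already. Summing what you actually computed over $Q\in\D_k$ gives exactly the lemma, so simply drop the remark about a ``slightly stronger factor $\ell_k^d$''.
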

    
\begin{proof}
        For every fixed $k$, we have by subadditivity
        \begin{equation}\label{eq:subadditivity_negligible_term}
         \W_{p}^p(\lambda^k,\lambda^{k+1}) \leqslant \sum_{Q \in \D_k} \W_{p, Q}^p\left(\kappa_Q g, \sum_{\substack{ R \in \D_{k+1} \\ R  \subset Q}} \kappa_{R} g \right).
        \end{equation}
        We will simply write $\sum_{R\subset Q}$ to denote the sum over all $R \in \D_{k+1}$ such that $R \subset Q$.
        For a fixed $Q$, by taking the expectation in Proposition \ref{lem:bb} and using that $g$ is bounded from below, 
        \begin{align}
         \EE\lt[\W_{Q}^p\lt(\kappa_{Q} g,\sum_{R\subset Q} \kappa_R \mathbf{1}_R g\rt)\rt] &\les \EE\lt[ \frac{\ell_k^p}{(\inf_{Q} \kappa_{Q} g)^{p-1}}\int_{Q} \left|\sum_{R\subset Q} \kappa_R \mathbf{1}_R g - \kappa_{Q} g\right|^p\rt] \nonumber \\
         &\les \ell_k^p \int_{Q} \EE\lt[ \frac{1}{\kappa_{Q}^{p-1}}\left|\sum_{R\subset Q} \mathbf{1}_R\kappa_R  - \kappa_{Q} \right|^p\rt]|g|^p.\label{decomp_kappa_term}
        \end{align}
        Now, since $\kappa_Q = \sum_{R \subset Q}\mathbf{1}_R \kappa_Q$, we decompose: 
        \begin{align}
            \EE\lt[ \frac{1}{\kappa_{Q}^{p-1}}\left|\sum_{R\subset Q} \kappa_R \mathbf{1}_R  - \kappa_{Q} \right|^p\rt]&= 
            \EE\lt[\kappa_{Q} \left|\sum_{R\subset Q}  \mathbf{1}_R \left( \frac{\kappa_R}{\kappa_{Q}} - 1 \right)\right|^p\rt] \nonumber \\
            &\les \sum_{R \subset Q} \mathbf{1}_R\EE\lt[\kappa_{Q}| \kappa_R /\kappa_{Q} -1|^p\rt].\label{eq:decomp_kappa_term_2}
        \end{align}
        Lemma \ref{lem:kappa_concentration} proves that\footnote{$v_R$ was defined in \eqref{def:v}.}
$\mathbb{E}[\kappa_Q |\kappa_R/\kappa_Q - 1|^p] \leqslant n^{1 - p/2} v_R^{p/2}$. 
        Plugging this into \eqref{decomp_kappa_term}, we get
        \[
         \EE\lt[\W_{p,Q}^p\left(\kappa_{Q} g,\sum_{R\subset Q} \kappa_R g\right)\rt]\les  \ell_k^p n^{1-\frac{p}{2}}  \sum_{R\subset Q} \int_R v_R^{p/2} g^p.
        \]
        Summing over $Q \in \D_k$ we find
        \[
         \EE\lt[ \W_{p,Q^0}^p(\lambda^k,\lambda^{k+1})\rt]\les \ell_k^p n^{1-\frac{p}{2}} \sum_{R \in \D_{k+1}}\int_{R} v_R^{p/2} g^p.
        \]
        Since $v_R \les |R|^{-1}\les \ell_k^{-d}$, we can bound the right-hand side by 
        $$\ell_k^{p(1-d/2)} n^{1-\frac{p}{2}} \int_{Q_0} g^p.$$
    \end{proof}

    \begin{proof}[Proof of \eqref{eq:E}]
    
        We sum the terms $\W_{p, Q^0}^p(\lambda^k,\lambda^{k+1})$ over all scales $k \in \{0, \ldots, K-1\}$ and we absorb $\int_{Q_0} g^p$ into the constant. We get
        \begin{align*}
         \sum_{k=0}^{K-1} \EE\lt[ \W_{p, Q^0}^p(\lambda^k,\lambda^{k+1})\rt]&\les n^{1-\frac{p}{2}} \sum_{k=0}^{K-1}  {\ell_k}^{p(1-\frac{d}{2})}\les n^{1-\frac{p}{2}}\ell_K^{p(1-d/2)}
        \end{align*}
        where the last estimate follows from $\ell_k = 2^{-k}$. Plugging this into the definition of $E$ in \eqref{msdecomp} then averaging, we find
        \begin{equation}\label{mainestim}
         n^{\frac{p}{d}-1}E\les K^p n^{p(\frac{1}{d}- \frac{1}{2})}\ell_K^{p(1-d/2)}.
        \end{equation}
        Using $K \les \log n$, $d\geqslant 3$, and \eqref{eq:ell_K}, we see that 
        $$n^{p/d-1}E \les (\log n)^{\alpha p(1 - d/2)+p} \to 0$$
        provided that $\alpha$ is large enough: $\alpha > 2/(d-2)$ is sufficient and this is always satisfied by $\alpha=3$ since $d\geqslant 3$.

    \end{proof}

        \subsection{Proof of Lemma \ref{lem:I}} By \eqref{wasserstein_linearity}, $(\I)$ is equal to
         $\mathbb{E}[\gamma \W_{p,Q} ( \nu_n, (\delta/\gamma) f)^p]$. We now define
        \begin{align*}&\mathscr{A}_Q = \left\lbrace \gamma_Q > (1 + \varepsilon) \frac{\int_Q g}{\int_Q f} \right\rbrace &&\mathscr{A} = \bigcup_{Q \in \D_K} \mathscr{A}_Q \\
        &\mathscr{B}_Q = \left\lbrace \left|\frac{\nu_n(Q)}{n\int_Q f} - 1\right| > \varepsilon \right\rbrace &&\mathscr{B} = \bigcup_{Q \in \D_K} \mathscr{B}_Q
        \end{align*} 
        and finally $\mathscr{E} = \mathscr{A} \cup \mathscr{B}$. Then, 
        \begin{align}
        \mathbb{E}[\gamma \W_{p,Q}^p ( \nu_n, (\delta/\gamma) f)] &= \mathbb{E}[\mathbf{1}_{\mathscr{E}^c} \gamma \W_{p,Q}^p ( \nu_n, (\delta/\gamma) f)] + \mathbb{E}[\mathbf{1}_{\mathscr{E}} \gamma \W_{p,Q}^p ( \nu_n, (\delta/\gamma) f)] \nonumber \\
        &\leqslant (1+\varepsilon)\frac{\int_Q g}{\int_Q f} \mathbb{E}[\mathbf{1}_{\mathscr{E}^c}\W_{p,Q}^p ( \nu_n, (\delta/\gamma) f)] + \mathbb{E}[\mathbf{1}_{\mathscr{E}} \gamma \W_{p,Q}^p ( \nu_n, (\delta/\gamma) f)].\label{eq:I_bound}
        \end{align}

        \subsubsection{\texorpdfstring{Event $\mathscr{E}$}{Event E} has small probability}

        By the union bound and the fact that $|\D_K|=2^{dK}\leqslant n$, we have $\mathbb{P}(\mathscr{E}) \leqslant n \sup_{Q \in \D_K}(\mathbb{P}(\mathscr{A}_Q)+\mathbb{P}(\mathscr{B}_Q))$. Lemma \ref{lem:gamma_concentration} ensures that $\mathbb{P}(\mathscr{A}_Q) \leqslant e^{-nC|Q|}$ for some constant $C$ depending on $\varepsilon$ only. Since $n|Q|\sim \log(n)^{\alpha d}$ (see \eqref{eq:vol_DK}) we see that $\mathbb{P}(\mathscr{A}_Q) \leqslant e^{-C \log(n)^{\alpha d}}$ which is itself smaller than $n^{-p/d-10}$ for $n$ large enough. Proposition \ref{prop:concentration_number_of_points} also ensures that $\mathbb{P}(\mathscr{B}_Q) \leqslant n^{-p/d-10}$ for $n$ large enough. By the union bound, we finally obtain 
        \begin{equation}\label{eq:prob_bad_event}
        \mathbb{P}(\mathscr{E}) \les n^{-p/d-9}.
        \end{equation}

        \subsubsection{Bounding the second term of \eqref{eq:I_bound}}
        By \eqref{wasserstein_bound_bounded_domain} and the fact that $\mathrm{diam}(Q)\leqslant 1$ we see that $\W_{p,Q} ( \nu_n, (\delta/\gamma) f)^p \leqslant \nu_n(Q) \leqslant n$. Consequently, by the Cauchy-Schwarz inequality,
        \begin{align*}
        \mathbb{E}[\mathbf{1}_{\mathscr{E}} \gamma \W_{p,Q} ( \nu_n, (\delta/\gamma) f)^p] &\leqslant  n \mathbb{E}[\mathbf{1}_{\mathscr{E}} \gamma] \leqslant n \sqrt{\mathbb{P}(\mathscr{E})}\sqrt{\mathbb{E}[\gamma^2]}.
        \end{align*}
        
        Lemma \ref{lem:gamma_squared} and \eqref{eq:prob_bad_event} imply that when $n$ is large enough, this is smaller than $n^{-p/d-7}$.
        
        \subsubsection{A uniform bound for unweighted Wasserstein cost}
        We define 
        $$F_p(k, \varrho) = \mathbb{E}[\W_{Q_0}^p(k\varrho, \eta_k)], $$
        where $\varrho$ is any probability density on $Q_0$, $k$ is an integer and $\eta_k$ is a point process with $k$ independent points whose common probability density is $\varrho$. We will then use the main result of \cite{goldman2021convergence}.

        \begin{proposition}There is a constant $\beta_{p,d} >0$ depending on $p$ and $d$ such that
        \begin{equation}
            \limsup_{k\to\infty}  k^{p/d - 1} F_p(k, 1) = \beta_{p,d}.
        \end{equation}
        \end{proposition}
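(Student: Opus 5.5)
The statement is essentially a rewriting of the definition \eqref{def:beta}. I would therefore prove it by reducing it to \eqref{def:beta} via the scaling property \eqref{wasserstein_linearity}, and then check that the constant lies in $]0,\infty[$.

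First, the reduction. For $\varrho\equiv 1$, the measure $\eta_k=\sum_{i=1}^k\delta_{X_i}$ has $X_i$ i.i.d.\ uniform on $Q_0$. Both $k\cdot 1$ and $\eta_k$ have mass $k$. Applying \eqref{wasserstein_linearity} with $\alpha=k$ gives
\[
\W_{Q_0}^p(k,\eta_k)=k\,\W_p^p\Bigl(1,\tfrac1k\textstyle\sum_{i=1}^k\delta_{X_i}\Bigr).
\]
Taking expectations and multiplying by $k^{p/d-1}$ yields
\[
k^{p/d-1}F_p(k,1)=k^{p/d}\,\EE\,\W_p^p\Bigl(\tfrac1k\textstyle\sum_{i=1}^k\delta_{X_i},1\Bigr).
\]
The $\limsup_{k\to\infty}$ of the right-hand side is by definition \eqref{def:beta} the constant $\beta_{p,d}$, which proves the displayed identity.

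Second, I would confirm that $\beta_{p,d}\in\,]0,\infty[$. This is where the main result of \cite{goldman2021convergence} is invoked. That paper shows the limit actually exists, is finite and is positive for all $p\geqslant1$ and $d\geqslant3$.

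For a self-contained sanity check of positivity, I would use the following nearest-point argument. In any coupling of Lebesgue measure on $Q_0$ with $\frac1k\sum_i\delta_{X_i}$, each $x$ is sent to some $X_i$. Hence the cost is at least $\int_{Q_0}\min_i|x-X_i|^p\,dx$. The set $\{x:\min_i|x-X_i|\leqslant r\}$ has volume at most $k\,\omega_d r^d$. Choosing $r=c\,k^{-1/d}$ with $c$ small, at least half of $Q_0$ lies at distance $\geqslant r$ from all points, so the cost is $\ges k^{-p/d}$ deterministically.

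Finiteness follows, for instance, from a dyadic multiscale bound of the type used in Lemma \ref{lem:pre-E} with $g\equiv f\equiv 1$. The only genuinely nontrivial point is finiteness with the sharp rate $k^{-p/d}$ for large $p$, which is exactly why I would rely on \cite{goldman2021convergence} rather than reprove it.
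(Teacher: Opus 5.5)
Your proposal is correct and matches the paper, which proves this statement simply by invoking the main result of \cite{goldman2021convergence}. Your scaling identity $k^{p/d-1}F_p(k,1)=k^{p/d}\,\EE\,\W_p^p\bigl(\tfrac1k\eta_k,1\bigr)$ from \eqref{wasserstein_linearity} is exactly what identifies the statement with \eqref{def:beta}, and your nearest-point volume argument is a valid self-contained proof of positivity.

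The one inaccurate aside concerns finiteness. Lemma \ref{lem:pre-E}, used as in the proof of \eqref{eq:E}, loses powers of $\log k$. The sources are the factor $K^p$ coming from \eqref{ineq:elementary_sum}, and the local term at the finest scale $\ell_K\sim(\log k)^{\alpha}k^{-1/d}$, which is only crudely bounded by $k\ell_K^p$. It gives the sharp rate only after refinement: stop at $\ell_K\sim k^{-1/d}$ and replace \eqref{ineq:elementary_sum} by a geometrically weighted H\"older inequality. Deferring to \cite{goldman2021convergence} for finiteness, as you ultimately do, is the right call.
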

        
        In particular, there is an integer $N$ such that for all $k \geqslant N$, 
        \begin{equation}\label{eq:F_p_bound_uniform}
        k^{p/d - 1} F_p(k, 1) \leqslant (1+\varepsilon) \beta_{p,d}.
        \end{equation}
        
        \subsubsection{Bounding the first term of \eqref{eq:I_bound}}
        
        The proof is analogous to \cite{ambrosio2022quadratic,BeCa}. We note that $\delta / \gamma = \nu_n(Q) / \int_Q f$. By scaling, we have
        \begin{equation}
        \mathbb{E}[\W_{p,Q}^p( \delta/\gamma f, \nu_n) \mid \nu_n(Q) = k] = \ell^{p}F_p(k, f_Q)
        \end{equation}
        where $f_Q$ is the restriction of $f$ on $Q$, rescaled to $Q_0$: 
        \begin{equation}\label{eq:f_Q}
        \forall x \in Q_0, \quad f_Q(x) = \frac{\ell^d f(z + x\ell)}{\int_Q f}
        \end{equation}
        where $z$ is the corner of $Q$ with the smallest coordinates (in 2d, that would be the bottom left corner). The gradient of $f_Q$ is $\les \ell$, and there is at least one $x_0 \in (0,1)^d$ such that $f_Q(x_0)=1$, hence for any $x \in Q_0$, we have $|f_Q(x) -1| \les \ell |x - x_0| \les \ell$. The density $f_Q$ thus satisfies $|f_Q - 1|_\alpha \leqslant c \ell$ when $n$ is large enough (see \eqref{def:alpha_holder_norm} for the definition of the $\alpha$-Hölder norm). Lemma \ref{lem:transport_map} ensures that there is a map $T : (0,1)^d \to (0,1)^d$ preserving the boundary of $(0,1)^d$ and such that $T_\# f_Q = 1$ and $\mathrm{Lip}(T), \mathrm{Lip}(T^{-1}) \leqslant 1+ c_{\alpha,d}|f_Q - 1|_\alpha =: L$. 
        
        For any $(X,Y)$ a coupling of $k f_Q$ and $\eta_k$, we thus see that $(T(X), T(Y))$ is a coupling of $k\times 1$ and a point process with $k$ independent points whose common probability density is $1$. Consequently, 
        $$\mathbb{E}[|X-Y|^p] = \mathbb{E}[|T^{-1}T(X)-T^{-1}T(Y)|^p] \leqslant L^p \mathbb{E}[|T(X)-T(Y)|^p].$$
        We conclude that 
        \begin{equation}\label{eq:W_p_bound_uniform}
            \mathbb{E}[\W_{p,Q}^p(k f_Q, \eta_k) \mid \nu_n(Q) = k] \leqslant L^p F_p(k, 1).
        \end{equation}
        
        On the event $\mathscr{B}^c$, we know that for all $Q \in \D_K$, $\nu_n(Q)\geqslant (1-\varepsilon)n\int_Q f$, which by Hypothesis \ref{hyp:W} is $\ges \log(n)^{\alpha d}$. In particular, when $n$ is large enough, on the event $\mathscr{B}^c$, we have $\nu_n(Q)\geqslant N$ with
         $N$ defined for \eqref{eq:F_p_bound_uniform}. Consequently, uniformly over all $Q \in \D_K$, we have
        \begin{align*}
        \mathbb{E}[\mathbf{1}_{\mathscr{E}^c}\W_{p,Q}^p( \delta/\gamma f, \nu_n)] &\leqslant  \mathbb{E}[\mathbf{1}_{\mathscr{B}^c} \mathbb{E}[\W_{p,Q}^p( \delta/\gamma f, \nu_n) \mid \nu_n(Q)]] \\
        & \leqslant \ell^p L^p \mathbb{E}[\mathbf{1}_{\mathscr{B}^c} F_p(\nu_n(Q), 1)] \\
        & \leqslant \ell^p L^p \mathbb{E}\left[\mathbf{1}_{\mathscr{B}^c} (1+\varepsilon)\nu_n(Q)^{1 - p/d}\beta_{p,d}\right] \\
        &\leqslant (1+\varepsilon)^{2 - p/d}\ell^p L^p \left(n \int_Q f\right)^{1 - p/d}\beta_{p,d}\\
        & = (1+\varepsilon)^{p+1}\ell^p  n^{1 - p/d}\beta_{p,d}\left(\int_Q f\right)^{1 - p/d}
        \end{align*}
        where in the last line, we used that $L-1 \les \ell$, hence $L \leqslant 1+\varepsilon$ when $n$ is large enough.
        
        \subsubsection{Gathering everything}
        Let us go back to \eqref{eq:I_bound}. If $n$ is large enough, then for all $Q \in \D_K$, we have
        \begin{align*}
        \mathbb{E}[\W_{p,Q}^p ( \gamma \nu_n, \delta f)] &\leqslant (1+\varepsilon)\frac{\int_Q g}{\int_Q f} \mathbb{E}[\mathbf{1}_{\mathscr{E}^c}\W_{p,Q}^p ( \nu_n, (\delta/\gamma) f)] + n^{-p/d-7} \\
        &\leqslant (1+\varepsilon)^{1+p} n^{1 - p/d} \beta_{p,d} \frac{\int_Q g}{\left(\int_Q f \right)^{p/d}} + n^{-p/d-7}
        \end{align*}
        By the uniform continuity of $f$ over $Q_0$, for all $x \in Q$ we have 
        \begin{equation*}
        \frac{\int_Q g}{\left(\int_Q f\right)^{p/d}} \leqslant (1+\varepsilon)g(x)f(x)^{-p/d}.
        \end{equation*}
        Plugging this into the preceding equation, we obtain that (uniformly over all $Q \in \D_K$), when $n$ is large enough, we have
        \begin{align*}
        \mathbb{E}[\W_{p,Q}^p ( \gamma \nu_n, \delta f)] &\leqslant (1+\varepsilon)^{2+p} n^{1 - p/d} \beta_{p,d} \int_Q gf^{-p/d} + n^{-p/d-7}\\
        &\leqslant (1+\varepsilon)^{3+p} n^{1 - p/d} \beta_{p,d} \int_Q gf^{-p/d}
        \end{align*}
        where in the last line we used the fact that $\int_Q gf^{-p/d} \ges \int_Q g \ges n^{-1}$, and thus $n^{-p/d-7}$ is negligible with respect to $n^{1 - p/d} \int_Q gf^{-p/d}$.

    \subsection{Proof of Lemma \ref{lem:II} and the first bound of Lemma \ref{lem:wb0} }
    We prove the following general bound: for any $Q \subset (0,1)^d$,
    \begin{equation}\label{eq:general_bound_II}
        \EE[\W_{p,Q}^p(\delta f, \kappa g)]   \les  n \ell^{2p} |Q|.
    \end{equation}
    Applying this first to $\ell=\ell_K$  and observing that  $\ell_K = (\log n)^{\alpha} n^{-1/d}$, this would prove Lemma \ref{lem:II}. Moreover, this proves also the first bound in Lemma \ref{lem:wb0}.

    \begin{proof}
    
    By \eqref{wasserstein_linearity}, we have $\W_{p,Q}^p(\delta f, \kappa g)  = \kappa \W_{p,Q}^p((\delta/\kappa) f, g)$. By noting that $\mathbb{E}[\kappa] = n$ and that $\delta/\kappa = \int_Q g / \int_Q f$, we get
    \begin{align*}
        \mathbb{E}[\W_{p,Q}^p(\delta f, \kappa g)] \leqslant \mathbb{E}[\kappa \W_{p,Q}^p((\delta/\kappa) f, g)] = n \W_{p,Q}^p((\delta/\kappa) f, g).
    \end{align*}
    We now use the bound from Proposition \ref{lem:bb}:  
    \begin{align*}
       (\III) = \mathbb{E}[\W_{p,Q}^p(\delta f, \kappa g)] &\les  \frac{n\ell_K^p}{(\inf g)^{p-1}} \int_Q \left|g - (\delta/\kappa)f \right|^p \les n\ell_K^{2p} \int_Q |\nabla (g - (\delta/\kappa)f)|^p, 
    \end{align*}
    where we used the fact that $(\inf g)^{1-p} \les 1$ by Hypothesis \ref{hyp:W}.
    The Poincaré-Wirtinger inequality (Proposition \ref{prop:poincare}) applied to $g - (\delta/\kappa)f$, which has mean value 0 on $Q$, yields 
    $$(\III) \les n\ell_K^{2p} \int_Q |\nabla (g - (\delta/\kappa)f)|^p.$$
    Since $\nabla g$ and $\nabla f$ are bounded and $\delta/\kappa$ is also bounded due to Hypothesis \ref{hyp:W}, this whole term has order $n\ell_K^{2p} |Q|$. 
    \end{proof}

    \subsection{Proof of Lemma \ref{lem:III} and second bound of Lemma \ref{lem:wb0} }
    We prove the more general bound: for any $Q \subset (0,1)^d$,
    \begin{equation}\label{eq:general_bound_III}
        \EE[\W_{p,Q}^p(\mu_n, \gamma \nu_n)] \les n \ell^{p+1} |Q|.
    \end{equation}
    Applying this first to $\ell=\ell_K$  this would conclude the proof of Lemma \ref{lem:III}. Moreover, this proves also the second bound in Lemma \ref{lem:wb0}.\\

    We start with the simple case where $\gamma=1/n$ so that both $\mu_n$ and $\gamma \nu_n$ are probability measures. For any coupling $(X,Y)$ of $\mu_n$ and $\gamma\nu_n$, the cost of this coupling is equal to 
            $$\sum_{x, y}\mathbb{P}(X=x, Y=y) |x-y|^p \leqslant \mathrm{diam}(Q)^p \mathbb{P}(X\neq Y)$$
            which proves that $ \W_{p,Q}^p(\mu_n , \gamma \nu_n) \leqslant \mathrm{diam}(Q)^p \mathrm{d}_{\mathrm{TV}}(\mu_n, \gamma \nu_n)$. This remains true for any $\gamma$ by scaling. Now, the measures $\mu_n$ and $\gamma \nu_n$ are supported on the same discrete set, so their total variation distance is equal to
            $$\dtv(\mu_n, \gamma \nu_n) = \frac{1}{2}\sum_{X_i \in Q}|W_i- \gamma|.$$
    
    We also have $W_i - \gamma = \nu_n(Q)^{-1}\sum_{X_j \in Q} (W_i - W_j)$, so that 
    \begin{align*}\dtv(\mu_n, \gamma \nu_n) &\leqslant \frac{\nu_n(Q)^{-1}}{2}\sum_{X_i, X_j \in Q} |W_i - W_j|.
    \end{align*}
Remember that $W_i = g(X_i)/f(X_i)$; thanks to Hypothesis \ref{hyp:W}, the function $g/f$ is Lipschitz. Indeed, 
$$\left|\nabla \frac{g}{f}\right| = \left|\frac{\nabla g}{f} - \frac{g}{f}\frac{\nabla f}{f}\right| \leqslant c^2 + c^4 \leqslant 2c^4.$$
We thus have
\begin{align*}\mathbb{E}\dtv(\mu_n, \gamma \nu_n) &\les \mathbb{E}\frac{\nu_n(Q)^{-1}}{2}\sum_{X_i, X_j \in Q} |X_i - X_j| \\
    &\les \mathbb{E}\nu_n(Q)\mathrm{diam}(Q)\\
    &\les n\ell |Q|.
\end{align*}
            This concludes the proof of \eqref{eq:general_bound_III}.

    \subsection{Proof of Lemma \ref{lem:wb1}}

Lemma \ref{lem:transport_map} gives a map $T$ which is a $(1+\eps)$-bi-Lipschitz map on $Q$ and transports $f/\int_{Q} f$ to $1/|Q|$. Let $N=\nu_n(Q)$ and let $\hat{\nu}_N=T\sharp \nu_n$ so that
    \[
     \hat{\nu}_N=\sum_{i=1}^N \delta_{Y_i}
    \]
    with $Y_i=T(X_i)$ which are iid uniformly distributed in $Q$. We then have
    \[
     \Wb_{p,Q}^p(\gamma\nu_n,\lambda f)\geqslant (1-\eps)^p \Wb_{p,Q}^p\left(\gamma\hat{\nu}_N,\frac{Z_n}{ |Q|} \int_Q g\right).
    \]
    We set $M=Z_n  \gamma^{-1} \int_Q g\sim Z_n |Q|$. Using the homogeneity of $\Wb_{p,Q}$, the triangle inequality, and the Young inequality, we see that
    \begin{multline}\label{splitmainterm}
     \Wb_{p,Q}^p\left(\gamma\hat{\nu}_N, \frac{Z_n}{ |Q|} \int_Q g\right)=\gamma \Wb_{p,Q}^p\left(\hat{\nu}_N,M/|Q|\right)\\
     \geqslant \gamma\lt[(1-\eps) \Wb_{p,Q}^p(\hat{\nu}_N,N/|Q|)-\frac{C}{\eps^{p-1}}\Wb_{p,Q}^p(N/|Q|, M/|Q|)\rt].
    \end{multline}
    To estimate the last right-hand side term we appeal to \cite[Lemma 2.5]{caglioti2024subadditivity} to obtain
    \[
     \Wb_{p,Q}^p(N/|Q|, M/|Q|)\les \ell^{p+d} \frac{|N/|Q|- M/|Q||^p}{(M/|Q|)^{p-1}}=\ell^{p} \frac{|N-M|^p}{M^{p-1}}.
    \]
    We now write that
    \begin{multline*}
     |N-M|= \frac{N}{\mu_n(Q)}|\mu_n(Q)-Z_n\int_Q g|\le \frac{N}{\mu_n(Q)}|\mu_n(Q)-n\int_Q g| + \frac{N}{\mu_n(Q)} |n-Z_n|\int_Q g\\
     \les |\mu_n(Q)-n\int_Q g| +  |n-Z_n||Q|=|\mu_n(Q)-\EE\sqa{\mu_n(Q)}| +  |Z_n-\EE\sqa{Z_n}||Q|
    \end{multline*}
    to obtain by Cauchy-Schwarz,
    \begin{multline*}
     \EE\sqa{\frac{|N-M|^p}{M^{p-1}}}\\
     \les |Q|^{1-p} \EE\sqa{Z_n^{2(1-p)}}^{1/2}\lt(\EE\sqa{|\mu_n(Q)-\EE\sqa{\mu_n(Q)}|^{2p}}^{1/2}+ |Q|^p\EE\sqa{|Z_n-\EE\sqa{Z_n}|^{2p}}^{1/2}\rt)\\
     \les |Q|^{1-p} n^{1-p}\lt( (n|Q|)^{p/2}+ |Q|^pn^{p/2}\rt)
     \les (|Q| n)^{1-p/2}.
    \end{multline*}
    In conclusion, recalling that by definition $n|Q|=L^d$,
    \begin{align}
     \EE\sqa{\Wb_{p,Q}^p(N/|Q|, M/|Q|)}&\les n^{1-p/d} (|Q|n)^{p(\frac{1}{d}-\frac{1}{2})} |Q|\label{estimchangemass}\\
     &=n^{1-p/d} L^{-p(\frac{d}{2}-1)} |Q|\\
     &=n^{1-p/d}o_L(1) |Q|.
     \end{align}

    We now estimate the first right-hand side term of \eqref{splitmainterm}. By the uniform continuity of $f$ and $g$, when $n$ is large enough we have $\gamma\geqslant (1-\eps) g(x)/f(x)$. 
    Recalling the definition \eqref{defFb} of $Fb_p$, we thus have by scaling
    \[
     \gamma \EE\sqa{\Wb_{p,Q}^p(\hat{\nu}_N,N/|Q|)}\geqslant (1-\eps) \frac{g(x)}{ f(x)} \ell^p \EE\sqa{N^{1-p/d}Fb_p(N)}.
    \]
    Since $\EE\sqa{N}=n\int_Q f\geqslant (1-\eps) f(x) n|Q|$ which tends to infinity as $L\to \infty$ we have by definition of $\bD$ and the good concentration properties of $N$ that
    \[
     \EE\sqa{N^{1-p/d}Fb_p(N)}\geqslant (1- o_L(1))\bD \lt(n\int_Q f\rt)^{1-p/d}
    \]
    and thus
    \begin{align*}
     \gamma \EE\sqa{\Wb_{p,Q}^p(\hat{\nu}_N,N/|Q|)}&\geqslant (1-\eps) (1- o_L(1))\bD \frac{g(x)}{ f(x)} \ell^p (n|Q| f(x))^{1-p/d}\\
     &=n^{1-p/d}(1-\eps) (1- o_L(1))\bD g(x) f(x)^{-p/d} |Q|.
    \end{align*}
    Averaging over $x$ we find
    \begin{equation}\label{maintermlocDir}
     \gamma \EE\sqa{\Wb_{p,Q}^p(\hat{\nu}_N,N/|Q|)}\geqslant n^{1-p/d}(1-\eps) (1- o_L(1))\bD \int_Q g f^{-p/d}.
    \end{equation}
    Combining \eqref{estimchangemass} and \eqref{maintermlocDir} together yields \eqref{claimmaintermlower}.

\bibliographystyle{abbrv}

\bibliography{OT}

\appendix

\section{Concentration inequalities}\label{appendix:concentration_inequalities}

We need concentration results for sums of random variables, typically of the form 
$$\sum_{i=1}^n W_i \mathbf{1}_{X_i \in Q}$$
where $Q$ is a very small measurable set --- in particular, most of the summands are zero, and the variance of the summands is small. In this context, Bennett-type inequalities are tighter than Hoeffding's. For generality, we will formulate our results for any pair $(X,W)$ of random variables, even if in the main text we will only use the case where $X$ is a random variable with density $f$ and $W = g(X)/f(X)$.

\subsection{Bennett's inequality}
Let $Z_1, \ldots, Z_n$ be a sequence of iid random variables and let $Z = \sum_{i=1}^n Z_i$. Then, Bennett's inequality states that for any $t>0$, 
\begin{equation}
    \mathbb{P}(Z > \mathbb{E}[Z] + t) \leqslant \exp\left(-n (\sigma/m)^2 \times h\left(\frac{mt}{n\sigma^2}\right)\right), 
\end{equation}
where $m = |Z_i|_\infty$ and $\sigma$ is the standard deviation of $Z_i$. The function $h$ is defined by
\begin{equation}
    h(x) = (1+x)\ln(1+x) - x.
\end{equation}
For future reference, we state the elementary properties of $h$ as a lemma.

\begin{lemma}\label{lem:h}
    $h$ is convex and increasing, $h'(x) = \ln(1+x)$, and $h(x) \geqslant \max\{0, (x^2 - x^3)/2\}$ for all $x\geqslant 0$.
\end{lemma}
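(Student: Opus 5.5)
The claims follow from one-variable calculus applied to $h(x)=(1+x)\ln(1+x)-x$ on $x\geqslant 0$.

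\emph{Derivatives.} Differentiating gives
$$h'(x)=\ln(1+x)+1-1=\ln(1+x),\qquad h''(x)=\frac{1}{1+x}>0.$$
So $h$ is convex on $[0,\infty)$. Since $h'(x)\geqslant 0$ for $x\geqslant 0$ (strictly for $x>0$), $h$ is increasing there. As $h(0)=0$, monotonicity also gives $h\geqslant 0$ on $[0,\infty)$.

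\emph{Cubic lower bound.} Set $\phi(x)=h(x)-(x^2-x^3)/2$. Then $\phi(0)=0$, and
$$\phi'(x)=\ln(1+x)-x+\tfrac{3}{2}x^2 .$$
We have $\phi'(0)=0$, and
$$\phi''(x)=\frac{1}{1+x}-1+3x=\frac{-x}{1+x}+3x=x\Big(3-\frac{1}{1+x}\Big)\geqslant 0 .$$
Hence $\phi'$ is nondecreasing from $\phi'(0)=0$, so $\phi'\geqslant 0$. Then $\phi$ is nondecreasing from $\phi(0)=0$, so $\phi\geqslant 0$, i.e. $h(x)\geqslant (x^2-x^3)/2$.

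Combining this with $h\geqslant 0$ yields $h(x)\geqslant \max\{0,(x^2-x^3)/2\}$ for all $x\geqslant 0$.

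The only point needing care is verifying the sign of $\phi''$, and it holds with room to spare: the constant in $(x^2-x^3)/2$ could be improved (e.g. via the Taylor expansion $h(x)=x^2/2-x^3/6+\dots$), but the stated bound is what is needed.
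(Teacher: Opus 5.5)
Your proof is correct. The paper states this lemma without proof as an elementary fact, and your computations ($h'(x)=\ln(1+x)$, $h''(x)=\frac{1}{1+x}>0$, $h(0)=0$, and for $\phi=h-(x^2-x^3)/2$ the check $\phi''(x)=x\bigl(3-\frac{1}{1+x}\bigr)\geqslant 0$ together with $\phi(0)=\phi'(0)=0$) are exactly the routine verification it leaves implicit, with monotonicity correctly restricted to $x\geqslant 0$.
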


\subsection{Concentration for linear functionals}

Let $(X_i)$ be iid random variables with density $f$ and let $W_i = g(X_i)/f(X_i)$. For any bounded measurable function $\varphi$, we are interested in the concentration of the functionals 
$$I(\varphi)  = \sum_{i=1}^n W_i \varphi(X_i). $$
The expectation of $I$ is given by $\mathbb{E}[I(\varphi)] = n\mathbb{E}_{X\sim g}[\varphi(X)]= n\int g\varphi$. 
Directly applying Bennett's inequality, we get the following concentration result. 
\begin{proposition}
    Let $\varphi$ be a bounded measurable function. We suppose that $W\varphi(X)$ is bounded by a constant $m_\varphi:=|W\varphi(X)|_\infty$. Then, noting $\sigma^2_\varphi = \mathrm{Var}(W \varphi(X))$, we have 
    \begin{equation}\label{ineq:concentration_sum_functional}
        \mathbb{P}\left(\sum_{i=1}^n W_i \varphi(X_i) > (1+\delta) n\int g(x)\varphi(x)\,dx\right) \leqslant \exp\left(-n \frac{\sigma^2_\varphi}{m_\varphi^2} h\left(\frac{\delta m_\varphi }{ \sigma^2_\varphi} \int g(x)\varphi(x)\,dx\right)\right).
    \end{equation}
    where $h(x) = (1+x)\ln(1+x) - x$.
    The same inequality holds for the lower deviation. 
\end{proposition}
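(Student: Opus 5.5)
This is a direct application of Bennett's inequality to the i.i.d. summands $Z_i = W_i\varphi(X_i)$, followed by rewriting the exponent in the notation of the statement.

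\emph{Moments and deviation.} Set $Z=\sum_{i=1}^n Z_i$. The $Z_i$ are i.i.d. and bounded: $|Z_i|\leqslant m_\varphi$ almost surely, which is finite because $W\leqslant c$ under Hypothesis \ref{hyp:W} and $\varphi$ is bounded. Their variance is $\sigma_\varphi^2$. By the importance sampling identity \eqref{eq:is}, $\mathbb{E}[Z_i]=\mathbb{E}_{X\sim f}[\varphi(X)g(X)/f(X)]=\int g\varphi$, so $\mathbb{E}[Z]=n\int g\varphi$. The event in \eqref{ineq:concentration_sum_functional} is $\{Z>\mathbb{E}[Z]+t\}$ with
$$t=\delta n\int g\varphi .$$

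\emph{Applying Bennett.} Bennett's inequality gives the bound $\exp\bigl(-n(\sigma_\varphi/m_\varphi)^2 h(m_\varphi t/(n\sigma_\varphi^2))\bigr)$. Substituting $t$, the argument of $h$ becomes
$$\frac{m_\varphi\,\delta\int g\varphi}{\sigma_\varphi^2},$$
which is exactly the expression in \eqref{ineq:concentration_sum_functional}. For this to make sense we need $t>0$, i.e. $\int g\varphi>0$ (the relevant case, e.g. $\varphi=\mathbf 1_Q$). If $\int g\varphi\leqslant 0$, either the statement is vacuous or one must read it with $|\int g\varphi|$; I would add this as a remark. The degenerate case $\sigma_\varphi=0$ is trivial: $Z$ is then almost surely equal to its mean, the probability is $0$, and the bound is read as its limit.

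\emph{Lower deviation.} Apply the same argument to $-Z_i$. These variables have the same bound $m_\varphi$ and the same variance $\sigma_\varphi^2$, so Bennett yields the identical bound for $\mathbb{P}\bigl(Z<(1-\delta)n\int g\varphi\bigr)$.

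The only real obstacle is the bookkeeping convention. Bennett is usually stated for centered variables with $Z_i-\mathbb{E}Z_i\leqslant m$, whereas here $m_\varphi$ bounds $|Z_i|$ itself. Since $|Z_i-\mathbb{E}Z_i|\leqslant 2m_\varphi$, the clean way is to use the one-sided version: $Z_i-\mathbb{E}Z_i\leqslant m_\varphi$ whenever $\mathbb{E}Z_i\geqslant 0$ (as when $\varphi\geqslant 0$). Otherwise, replacing $m_\varphi$ by $2m_\varphi$ changes only constants. Because $x\mapsto x^{-2}h(ax)$ is decreasing in the scale parameter, using any valid upper bound for $m$ only weakens the estimate, so either route keeps the inequality valid.
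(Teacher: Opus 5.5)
Your proposal matches the paper's proof, which is a direct application of Bennett's inequality to $Z_i=W_i\varphi(X_i)$ with deviation $t=\delta n\int g\varphi$; your caveats on the sign of $\int g\varphi$ and on the boundedness convention are extra care that the paper omits. One small fix: for the lower tail, the one-sided condition $\mathbb{E}Z_i-Z_i\leqslant m_\varphi$ does not come from a sign condition on $\mathbb{E}[-Z_i]$ but from $Z_i\geqslant 0$ (as for $\varphi=\mathbf{1}_Q$, the only case used), via $\mathbb{E}Z_i-Z_i\leqslant \mathbb{E}Z_i\leqslant m_\varphi$, so the stated constant holds there, whereas for signed $\varphi$ the lower-tail bound with $m_\varphi$ can actually fail and your $2m_\varphi$ is genuinely needed.
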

\begin{proof}The result is a straightforward application of Bennett's inequality, choosing $\xi = \delta n\int g \varphi$. \end{proof}

\subsection{Concentration for $\kappa$} 

We will use the precedings bounds with $\varphi = \mathbf{1}_{X \in Q}$ to get concentration bounds for $\kappa_Q$. Recall that 
$$\kappa_Q  = \frac{\sum W_i \mathbf{1}_{X_i \in Q}}{\int_Q g}$$
so that $\mathbb{E}[\kappa_Q] = n$. 
For simplicity we will note $m,\sigma^2$ instead of $m_Q,\sigma^2_Q$ and we will often use the notation  $$b =  \frac{m\mathbb{E}[W \mathbf{1}_{X \in Q}]}{\sigma^2}.$$ 
The preceding concentration results thus yield that 
\begin{equation}\label{i:kappa}
    \mathbb{P}\left(\left| \frac{\kappa_Q}{n} - 1 \right| > \delta\right) \leqslant 2e^{-n \sigma^2/m^2 h(\delta b)}
\end{equation}
and the same inequality holds for the lower deviation. 

\begin{proposition}For any $Q \subset (0,1)^d$ and any fixed $p>0$, 
    
    \begin{equation}\label{ineq:bound_on_kappa_2}
        \mathbb{E}\left[ \left| \frac{\kappa_Q}{n} - 1 \right|^p\right] \les \mathrm{Var}\left(\frac{\kappa_Q}{n}\right)^{p/2}.
    \end{equation}
    
\end{proposition}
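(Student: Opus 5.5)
The identity $\kappa_Q/n - 1 = \frac{1}{n\int_Q g}\sum_{i=1}^n (Y_i - \mathbb{E}[Y_i])$, with $Y_i = W_i\mathbf{1}_{X_i\in Q}$, reduces the statement to a moment bound for a centred sum of i.i.d. bounded variables. I would first record that
$$\mathrm{Var}(\kappa_Q/n) = \frac{\sigma^2}{n(\int_Q g)^2} = \frac{v_Q}{n},$$
where $\sigma^2 = \mathrm{Var}(W\mathbf{1}_{X\in Q})$ and $v_Q$ is as in \eqref{def:v}. I would then split into the cases $p\leqslant 2$ and $p>2$.

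For $p\leqslant 2$ the bound is immediate. Since $t\mapsto t^{p/2}$ is concave, Jensen's inequality (equivalently Hölder) gives
$$\mathbb{E}\bigl|\kappa_Q/n-1\bigr|^p \leqslant \bigl(\mathbb{E}|\kappa_Q/n-1|^2\bigr)^{p/2} = \mathrm{Var}(\kappa_Q/n)^{p/2}.$$

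For $p>2$ I would write the $p$-th moment through its tail,
$$\mathbb{E}|\kappa_Q/n-1|^p = \int_0^\infty p\,\delta^{p-1}\,\mathbb{P}(|\kappa_Q/n-1|>\delta)\,d\delta,$$
and insert the Bennett bound \eqref{i:kappa}. By Lemma \ref{lem:h}, $h(x)\gtrsim x^2$ for $x\leqslant 1$ and $h(x)\gtrsim x\log(1+x)$ for $x\geqslant 1$.
\begin{itemize}
\item In the Gaussian range $\delta b\leqslant 1$ the exponent is $\gtrsim n\sigma^2 b^2\delta^2/m^2 = \delta^2/\mathrm{Var}(\kappa_Q/n)$. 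Integrating this piece yields exactly $\mathrm{Var}(\kappa_Q/n)^{p/2}$, up to a constant depending on $p$.
\item In the Poissonian range $\delta b\geqslant 1$ the exponent is $\gtrsim (n\sigma^2/m^2)\,\delta b$. Integrating gives a contribution of order $(m^2/(n\sigma^2 b))^{p}$. After substituting $b$, this is $\bigl(m\,\mathbb{E}[W\mathbf{1}_{X\in Q}]\,n\bigr)^{-p}\cdot(\text{const})$.
\end{itemize}

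The key step is to compare the Poissonian contribution with $\mathrm{Var}^{p/2}$. Equivalently, one can apply Rosenthal's inequality,
$$\mathbb{E}\Bigl|\sum_i (Y_i-\mathbb{E}Y_i)\Bigr|^p \lesssim (n\sigma^2)^{p/2} + n\,\mathbb{E}|Y-\mathbb{E}Y|^p,$$
together with $\mathbb{E}|Y-\mathbb{E}Y|^p\leqslant (2m)^{p-2}\sigma^2$. Either way, the second term is dominated by the first as soon as $n\sigma^2\gtrsim m^2$.

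I expect this to be the main obstacle, because the bound is false without some such condition. For a cube with $n|Q|\to 0$ we are in the Poisson regime, where $\mathbb{E}|\kappa_Q/n-1|^p \sim (n|Q|)^{1-p}\gg (n|Q|)^{-p/2}$. I would therefore check the condition $n\sigma^2\gtrsim m^2$ in the setting where the proposition is actually used, namely dyadic cubes $Q\in\D_k$ with $k\leqslant K$. Under Hypothesis \ref{hyp:W} we have $m\leqslant c$, and $\sigma^2 \geqslant \mathbb{E}[W^2\mathbf{1}_Q]-(\mathbb{E}[W\mathbf{1}_Q])^2\gtrsim |Q|$ whenever $|Q|\leqslant 2^{-d}$. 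Hence $n\sigma^2/m^2\gtrsim n|Q|\geqslant n\ell_K^d\gtrsim(\log n)^{\alpha d}\geqslant 1$.

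The remaining case is $Q=Q_0$, where $\sigma^2=\mathrm{Var}(W)$ may be tiny. Here I would argue separately:
\begin{itemize}
\item if $\sigma=0$, then $\kappa_{Q_0}=n$ and both sides vanish;
\item otherwise the implicit constant is allowed to depend on $f,g$, and for $n$ large we again have $n\sigma^2\geqslant m^2$.
\end{itemize}
Accordingly, I would state the proposition with the standing assumption $n\sigma^2\gtrsim m^2$ made explicit, and verify it in each place where the proposition is invoked.
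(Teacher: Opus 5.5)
Your argument is essentially the paper's: it also integrates the Bennett tail bound \eqref{i:kappa} and splits it into a quadratic-exponent range, which contributes $b^{-p}N^{-p/2}=\mathrm{Var}(\kappa_Q/n)^{p/2}$ with $N=n\sigma^2/m^2$, and a linear-exponent range, which contributes $b^{-p}N^{-p}$. Your caveat is correct and identifies a genuine omission in the paper, whose final ``gathering'' step silently absorbs $N^{-p}$ into $N^{-p/2}$, i.e.\ assumes $N\ges 1$; indeed the statement fails for $p>2$ without this (your Poisson example with $n|Q|\to 0$), your Jensen argument gives it unconditionally for $p\leqslant 2$, and your check that $N\ges n|Q|\ges(\log n)^{\alpha d}$ for cubes of $\D_k$ with $1\leqslant k\leqslant K$, together with the separate treatment of $Q_0$, is what the applications require --- note only the harmless slip that the Poissonian term is $\bigl(m/(n\,\mathbb{E}[W\mathbf{1}_{X\in Q}])\bigr)^{p}$, with $m$ in the numerator.
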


\begin{proof}
Set $V = \sigma^2/m^2$ to lighten the notation. Since $\kappa_Q = \int_Q w\mu / \int_Q g$, by the union bound for upper and lower deviations in \eqref{i:kappa} we have $\mathbb{P}(|\kappa_Q/n - 1| > \delta)\leqslant 2e^{-n V h(\delta b)}$. Then, for any $p>0$, 
\begin{align*}
\mathbb{E}\left[ \left| \frac{\kappa_Q}{n} - 1 \right|^p\right] &= \int_0^\infty \mathbb{P}\left( \left| \frac{\kappa_Q}{n} - 1 \right|^p > t \right) dt \\
&\leqslant \int_0^\infty 2e^{-nV h(t^{1/p}b)} dt \\
&\leqslant 2pb^{-p}\int_0^\infty e^{-N h(u)}u^{p-1} du 
\end{align*}
where $N = nV$. We split the integral in two parts, $\int_0^\infty = \int_0^{1/2} + \int_{1/2}^\infty$. 
For the first term, we use the bound on $h$ in Lemma \ref{lem:h}. For $x \leqslant 1/2$, we have $h(x) \geqslant(x^2 - x^3)/2 \geqslant x^2/ 4$, hence
$$ \int_0^{1/2}e^{-Nh(u)}u^{p-1}du \leqslant \int_0^{1/2} e^{-Nu^2/4}u^{p-1}du \les N^{-p/2}.$$
For the second term, by convexity of $h$ we have $h(u)\geqslant h(1/2)+(u-1/2)h'(1/2) = u \ln(3/2) - \ln(\sqrt{3/2})$. Plugging this estimate, we get
$$ \int_{1/2}^{\infty} e^{-Nh(u)}u^{p-1}du \les \int_{1/2}^\infty e^{-\ln(3/2)Nu}u^{p-1}du \les N^{-p}.$$
Gathering the two bounds yields 
$$\mathbb{E}\left[ \left| \frac{\kappa_Q}{n} - 1 \right|^p\right] \les b^{-p}N^{-p/2} = (b^2 Vn )^{-p/2}. $$
By the definition of $b$ and $V=\sigma^2/m^2$,  we have
$$b^2 V= \frac{\mathbb{E}[W \mathbf{1}_{X \in Q}]^2}{\mathrm{Var}(W \mathbf{1}_{X \in Q})}$$
which concludes the proof.
\end{proof}

\begin{lemma}\label{lem:kappa_concentration}Let $A\subset B$ be two subsets of $(0,1)^d$ such that $\int_A g \sim \int_B g$. Then, $$\mathbb{E}[\kappa_B |\kappa_A/\kappa_B - 1|^p]\les n^{1 - p/2}\left(\frac{\mathrm{Var}(W \mathbf{1}_{X \in A})}{\mathbb{E}[W \mathbf{1}_{X \in A}]^2}\right)^{p/2},$$ 
    where it is understood that $\kappa_A / \kappa_B=0$ if $\kappa_B=0$.
\end{lemma}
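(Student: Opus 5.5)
Write $S_A=\sum_{i=1}^n W_i\mathbf{1}_{X_i\in A}=\mu_n(A)$, so that $\kappa_A=S_A/\int_A g$ and $\mathbb{E}[\kappa_A]=\mathbb{E}[\kappa_B]=n$. The plan is to reduce the quantity $\kappa_B|\kappa_A/\kappa_B-1|^p$ to the deviations of $\kappa_A$ and $\kappa_B$ from their common mean $n$. The bounds for these deviations are already available in \eqref{ineq:bound_on_kappa_2}.

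\textbf{Algebraic reduction.} Write
\[
\kappa_B\left|\frac{\kappa_A}{\kappa_B}-1\right|^p=\frac{|\kappa_A-\kappa_B|^p}{\kappa_B^{p-1}}.
\]
The difficulty is that $\kappa_B$ may be small, and it appears in the denominator. I would split on the event $G=\{\kappa_B\geqslant n/2\}$.

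\textbf{The good event $G$.} On $G$ the expression is bounded by $2^{p-1}n^{1-p}|\kappa_A-\kappa_B|^p$. By \eqref{ineq:elementary_sum} this is in turn bounded, up to a constant, by $n^{1-p}\left(|\kappa_A-n|^p+|\kappa_B-n|^p\right)$. Applying \eqref{ineq:bound_on_kappa_2} to both $A$ and $B$ gives
\[
\mathbb{E}|\kappa_A-n|^p\les n^p\,\mathrm{Var}(\kappa_A/n)^{p/2}=n^{p/2}v_A^{p/2},
\]
because $\mathrm{Var}(\kappa_A/n)=v_A/n$ by independence. The same holds for $B$ with $v_B$. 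To compare $v_B$ with $v_A$, note that $\mathrm{Var}(W\mathbf{1}_B)\leqslant\mathbb{E}[W^2\mathbf{1}_B]\leqslant c\int_B g$. Since $\int_B g\sim\int_A g$ and $\mathrm{Var}(W\mathbf 1_A)\ges\int_A g$ (this holds whenever $\int_A g$ is not close to $1$; when $A$ is macroscopic, all quantities are of order one and a direct argument works), we get $v_B\les v_A$. Collecting the factors, the contribution of $G$ is $\les n^{1-p}n^{p/2}v_A^{p/2}=n^{1-p/2}v_A^{p/2}$, which is the claimed bound.

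\textbf{The bad event $G^c$.} Since $A\subset B$, we have $S_A\leqslant S_B$, so $\kappa_A/\kappa_B\leqslant \int_B g/\int_A g\les 1$. Hence the integrand is at most $C\kappa_B$, and on $G^c$ it is therefore $\les n$. The contribution is thus $\les n\,\mathbb{P}(|\kappa_B/n-1|>1/2)$. By \eqref{i:kappa}, this probability is at most $2e^{-nVh(b/2)}$.

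The main obstacle is to show that this exponentially small term is dominated by $n^{1-p/2}v_A^{p/2}$. Here the crude bounds on the variance ratio are used. We have $b\ges 1$ and $nV\ges n\int_B g$. When $n\int_B g\ges 1$, the exponential $e^{-cn\int_B g}$ is bounded by a constant times $(n\int_B g)^{-p/2}$, and since $v_A\ges(\int_A g)^{-1}\cdot c^{-1}$, this yields $n\,e^{-cn\int_B g}\les n^{1-p/2}v_A^{p/2}$. When $n\int_B g\les 1$, I would not split at all. Instead I would use directly that the integrand is at most $C\kappa_B$. This gives the bound $\les\mathbb{E}\kappa_B=n\les n(n\int_A g)^{-p/2}\sim n^{1-p/2}v_A^{p/2}$, and the proof is complete.
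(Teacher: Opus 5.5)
Your skeleton is the paper's: a good/bad event split, the bound $\kappa_A/\kappa_B\leqslant\int_B g/\int_A g\lesssim 1$ on the bad event, and on the good event $\kappa_B^{1-p}\lesssim n^{1-p}$, the triangle inequality through $n$, \eqref{ineq:bound_on_kappa_2}, and $v_B\lesssim v_A$. You are in fact more careful than the paper. You split on $\kappa_B$, which is what sits in the denominator. You state the variance comparison in the right direction; the paper's last sentence reverses it. And you check that the bad-event term is dominated, which the paper omits.

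\textbf{The gap.} It is the parenthetical ``when $A$ is macroscopic, all quantities are of order one and a direct argument works''. When $A$ nearly exhausts the cube, $v_A$ is not of order one, and no argument can work, because for $p>2$ the statement is false there. Take $f=g\equiv 1$, $B=(0,1)^d$, $|A|=1-\eta$. Then $\kappa_B=n$, $\kappa_A=(n-F)/(1-\eta)$ with $F\sim\mathrm{Bin}(n,\eta)$, $v_A=\eta/(1-\eta)$, and
\[
\mathbb{E}\bigl[\kappa_B\,|\kappa_A/\kappa_B-1|^p\bigr]=\frac{n^{1-p}}{(1-\eta)^{p}}\,\mathbb{E}|F-n\eta|^p\geqslant\frac{n^{1-p}}{(1-\eta)^{p}}\,\mathbb{P}(F=1)\,(1-n\eta)^p\sim n^{2-p}\eta\qquad(n\eta\to 0).
\]
The right-hand side is of order $n^{1-p/2}\eta^{p/2}$, so the ratio is of order $(n\eta)^{1-p/2}$ and is unbounded (e.g.\ $n$ fixed, $\eta\to0$).

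The failure enters through \eqref{ineq:bound_on_kappa_2}. Its proof actually gives $b^{-p}(N^{-p/2}+N^{-p})$, and dropping $N^{-p}$ requires $N=n\sigma^2/m^2\gtrsim 1$. The paper uses it unconditionally and has the same defect. Your bad-event step also uses $v_A\gtrsim(\int_A g)^{-1}$, which fails in this regime.

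A separate, minor point: $nV\gtrsim n\int_B g$ fails when $\sigma_B^2\ll\int_B g$ (e.g.\ $B=(0,1)^d$ and $W$ nearly constant). It is not needed, though: $b\geqslant 1$ and the monotonicity of $h(x)/x$ give $nVh(b/2)\geqslant h(1/2)\,n\int_B g/m$.

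\textbf{The repair.} Add the hypothesis $\int_A f\leqslant 1-c_0$ for a fixed $c_0>0$. This holds where the lemma is applied in Lemma \ref{lem:pre-E}: a dyadic child $R$ satisfies $\int_R f\leqslant 1-(1-2^{-d})/c$, even when its parent is $(0,1)^d$. Also use \eqref{ineq:bound_on_kappa_2} in the form its proof gives, $\mathbb{E}|\kappa_Q/n-1|^p\lesssim(v_Q/n)^{p/2}+(n\int_Q g)^{-p}$ (note $bN=n\int_Q g/m$).

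Then $\mathrm{Var}(W\mathbf{1}_{X\in A})\geqslant(1-\int_A f)\,\mathbb{E}[W\mid X\in A]\int_A g\gtrsim\int_A g$, so $v_A\sim(\int_A g)^{-1}\gtrsim v_B$. In your branch $n\int_B g\gtrsim 1$, both $\mathbb{E}|\kappa_A/n-1|^p$ and $\mathbb{E}|\kappa_B/n-1|^p$ are then $\lesssim(n\int_A g)^{-p/2}\lesssim(v_A/n)^{p/2}$. Your bad-event estimate (with the exponent fixed as above) and your trivial bound $\mathbb{E}\kappa_B=n$ in the branch $n\int_B g\lesssim 1$ go through unchanged. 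No separate macroscopic case is needed.
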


\begin{proof}We first emphasize that since $A \subset B$, then
    $$\frac{\kappa_A}{\kappa_B}  = \frac{\int_A w\mu}{\int_B w\mu}\frac{\int_B g}{\int_A g}\leqslant \frac{\int_B g}{\int_A g} \les 1.$$
    Let us note $\mathscr{E} = \{|\kappa_A - n\int_A g| > n\int_A g/2\}$, an event with probability smaller than $e^{-n (\sigma^2/m^2)h(b/2)}$ thanks to \eqref{i:kappa}. Then, 
    \begin{align*}\mathbb{E}[\kappa_B |\kappa_A/\kappa_B - 1|^p]&\les \mathbb{E}[\mathbf{1}_{\mathscr{E}}\kappa_A] + \mathbb{E}\left[\mathbf{1}_{\bar{\mathscr{E}}}\frac{|\kappa_B - \kappa_A|^p}{\kappa_B^{p-1}}\right] \\ 
    &\les  b\mathbb{P}(\mathscr{E}) +  n \mathbb{E}[|\kappa_B/n-\kappa_A/n|^p]
\end{align*}
 because $\kappa_A$ is smaller than $b$. 
 The second term is smaller than $n (\mathbb{E}[|\kappa_B/n - 1|^p] + \mathbb{E}[|\kappa_A/n - 1|^p])$ up to a constant. We now use \ref{ineq:bound_on_kappa_2} for both terms; since the variance of $\kappa_A$ is smaller than the variance of $\kappa_B$, we get an overall bound which is $\les n \mathrm{Var}(\kappa_A/n)^{p/2}$.
\end{proof}

\subsection{Concentration for $\gamma$} 

We recall that 
$$\gamma_Q = \frac{\sum_{i=1}^n W_i \mathbf{1}_{X_i \in Q}}{\sum_{i=1}^n \mathbf{1}_{X_i \in Q}} .$$

\begin{lemma}\label{lem:gamma_concentration}For any $\varepsilon > 0$ and any $Q \subset (0,1)^d$, there is a constant $C(\varepsilon)$ such that
\begin{equation}
    \mathbb{P}\left(\gamma > (1 + \varepsilon)\frac{\int_Q g}{\int_Q f}\right) \leqslant 2e^{-n|Q|C(\varepsilon)}.
\end{equation}
\end{lemma}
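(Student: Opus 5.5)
Write $a=\int_Q g$ and $b=\int_Q f$, and set $\theta=a/b$, which is the target value. The plan is to express the event $\{\gamma>(1+\varepsilon)\theta\}$ as a large deviation of a sum of i.i.d. bounded centered variables, and then apply Bennett's inequality exactly as in \eqref{ineq:concentration_sum_functional}. When $\nu_n(Q)=0$ we have $\gamma=0$, so on the event in question $\nu_n(Q)\geqslant 1$. On that event,
\[
\gamma>(1+\varepsilon)\theta \iff \sum_{i=1}^n \mathbf{1}_{X_i\in Q}\bigl(W_i-(1+\varepsilon)\theta\bigr)>0 .
\]

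Next, set $Z_i=\mathbf{1}_{X_i\in Q}(W_i-(1+\varepsilon)\theta)$. These are i.i.d. with mean
\[
\mathbb{E}[Z_i]=a-(1+\varepsilon)\theta b=-\varepsilon a .
\]
They are bounded by $m\lesssim 1$, since $W\leqslant c$ and $\theta\lesssim 1$ by Hypothesis \ref{hyp:W}. Their variance satisfies $\sigma^2\leqslant \mathbb{E}[Z_i^2]\lesssim \mathbb{P}(X\in Q)\lesssim |Q|$. The event above is $\{\sum Z_i-\mathbb{E}\sum Z_i>t\}$ with $t=n\varepsilon a\sim \varepsilon n|Q|$, because $g\geqslant c^{-1}$. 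Bennett's inequality then gives the bound
\[
\exp\bigl(-n(\sigma^2/m^2)\,h(mt/(n\sigma^2))\bigr).
\]

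The main step is to check that this exponent is at least $C(\varepsilon)n|Q|$ uniformly in how small $\sigma^2$ is. The function $s\mapsto s\,h(x_0/s)$ is nonincreasing in $s$, because $h(x)/x$ is increasing. Since $\sigma^2\lesssim |Q|$, we may therefore replace $\sigma^2$ by its upper bound $C_1|Q|$. The quantity $mt/(n\sigma^2)$ is then $\gtrsim \varepsilon$, and the exponent is bounded below by
\[
n\,(C_1|Q|/m^2)\,h(c'\varepsilon)=:n|Q|\,C(\varepsilon),
\]
with $C(\varepsilon)>0$ because $h>0$ on $(0,\infty)$. A minor subtlety is that Bennett is stated with $\sigma$ the actual standard deviation; the monotonicity argument (equivalently, using the variance proxy version of Bennett) handles this. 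Finally, if $\sigma=0$ the probability is $0$ trivially, since then $Z_i\equiv-\varepsilon a<0$ a.s. The factor $2$ in the statement is harmless slack, and the same argument with $(1-\varepsilon)$ gives the lower deviation if needed.
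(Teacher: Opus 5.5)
Your proof is correct, but it follows a different route from the paper.

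\textbf{The paper's route.} The paper decouples the numerator and denominator of $\gamma$. It introduces a slack parameter $\nu$ (say $\nu=\varepsilon/10$) and bounds the event by the union of $\{\nu_n(Q)<(1-\nu)n\int_Q f\}$ and $\{\mu_n(Q)>(1+\varepsilon/2)n\int_Q g\}$. It then applies Bennett separately to the unweighted counts and to the weighted sum, and concludes with unspecified ``elementary estimates on $h$''. The prefactor $2$ comes from this union bound.

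\textbf{Your route.} You linearize the ratio event exactly: $\{\gamma>(1+\varepsilon)\theta\}=\{\sum_i \mathbf{1}_{X_i\in Q}(W_i-(1+\varepsilon)\theta)>0\}$, which even holds when $\nu_n(Q)=0$. Because $\theta=\int_Q g/\int_Q f$, the summands have mean exactly $-\varepsilon\int_Q g$. So a single Bennett application suffices, with no slack parameter and no factor $2$.

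\textbf{What each buys.} Your version makes explicit the point the paper glosses over: the exponent must be bounded below uniformly as $\sigma^2\to 0$, i.e.\ for small $|Q|$. The monotonicity of $s\mapsto s\,h(x_0/s)$, coming from $h(x)/x$ being increasing, is exactly what is needed. The same argument is implicitly required for both of the paper's Bennett bounds. Your bound $m\lesssim 1$ is really $m\lesssim 1+\varepsilon$, which is harmless since $C(\varepsilon)$ may depend on $\varepsilon$. The paper's decomposition is more modular, since it reuses concentration statements for $\nu_n(Q)$ and $\mu_n(Q)$ that appear elsewhere in the appendix. Yours is shorter and sharper for this particular ratio.

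\textbf{One caveat on your closing remark.} For the lower deviation $\{\gamma<(1-\varepsilon)\theta\}$, the equivalence with $\{\sum_i Z_i<0\}$ fails on $\{\nu_n(Q)=0\}$, where $\gamma=0$ by convention. You would have to add $\mathbb{P}(\nu_n(Q)=0)\leqslant e^{-n\int_Q f}$ separately. This does not affect the statement at hand.
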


\begin{proof}We note $\mathscr{E} = \left\{\sum \mathbf{1}_{X_i \in Q} < (1 - \nu)n\int_Q f\right\}$ for some $\nu > 0$. Then, 
    \begin{align*}\mathbb{P}\left(\gamma > (1 + \varepsilon)\frac{\int_Q g}{\int_Q f}\right) &\leqslant \mathbb{P}(\mathscr{E}) + \mathbb{P}\left(\sum_{i=1}^n W_i \mathbf{1}_{X_i \in Q} > (1 + \varepsilon) (1 - \nu)n\int_Q g \right)\\
        &\leqslant \mathbb{P}(\mathscr{E}) + \mathbb{P}\left(\sum_{i=1}^n W_i \mathbf{1}_{X_i \in Q} > (1 + \varepsilon/2)n\int_Q g \right)
    \end{align*}
    for some $\nu > 0$ sufficiently small ($\nu=\varepsilon/10$ is enough).

    For the first term, we use our main results with weights $W_i$ equal to $1$. In this case, $m=1$, $\mathbb{E}[W \mathbf{1}_{X \in Q}] = \int_Q f=:q$ and $\sigma^2 = q(1-q)$. Hence, we obtain a bound of $e^{-n q(1-q) h(\delta /(1-q))}$. 
    For the second term, we apply \eqref{ineq:concentration_sum_functional} with $\varphi(x) = \mathbf{1}_{x \in Q}$. In this case $\mathbb{E}[W\mathbf{1}_{X \in Q}] = \int_Q g = :p$ and we can use Hypothesis \ref{hyp:W} to bound $1/c \leqslant m \leqslant c$, hence we obtain a bound of $e^{-n \sigma^2/c^2 h(\varepsilon \int_Q g/2c\sigma^2)}$. 
    Again by Hypothesis \ref{hyp:W}, it is easily seen that $\sigma^2 \leqslant p(c-p)$, and that $p,q \leqslant c|Q|$. The result follows from elementary estimates on $h$. 
\end{proof}

We will also need the following very crude bound on $\gamma$. 

\begin{lemma}\label{lem:gamma_squared}
    $\mathbb{E}[\gamma^2]\leqslant n c$. 
\end{lemma}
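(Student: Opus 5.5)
Since $\gamma = \gamma_Q$ is a weighted average of the weights $W_i$ over the points falling in $Q$, the plan is to use the deterministic bound coming from Hypothesis \ref{hyp:W}. We recall that $\gamma_Q = \mu_n(Q)/\nu_n(Q)$ when $\nu_n(Q)\geqslant 1$, and $\gamma_Q=0$ otherwise. In the first case,
\[
\gamma_Q = \frac{\sum_{X_i\in Q} W_i}{\#\{i : X_i \in Q\}},
\]
which is a convex combination of the values $W_i = g(X_i)/f(X_i)$ with $X_i\in Q$. Each such value is at most $|g|_\infty |1/f|_\infty \leqslant c^2$. Hence $0\leqslant \gamma_Q \leqslant c^2$ almost surely, and therefore $\mathbb{E}[\gamma_Q^2]\leqslant c^4$.

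Next we compare this with the claimed bound $nc$. Since $n\geqslant 1$, the estimate $\mathbb{E}[\gamma^2]\leqslant c^4$ gives $\mathbb{E}[\gamma^2]\leqslant nc$ as soon as $n\geqslant c^3$; this is harmless because the lemma is only used for $n$ large in the proof of Lemma \ref{lem:I}.

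If one wants the bound literally for every $n$ with constant $c$, there is an alternative route. We write $\gamma_Q\leqslant \mu_n(Q)\leqslant Z_n$, using $\nu_n(Q)\geqslant 1$ whenever $\gamma_Q\neq 0$. We then bound $\gamma_Q^2 \leqslant \gamma_Q\cdot \sup_i W_i$ and take expectations, using $\mathbb{E}[\mu_n(Q)] = n\int_Q g \leqslant n$. This gives $\mathbb{E}[\gamma^2]\leqslant n\,|W|_\infty$, which is of the form $nc$ once $|W|_\infty$ is absorbed into the constant, consistent with the paper's convention that $c$ bounds all such quantities.

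There is no real obstacle in this argument. The only care needed is the convention $\gamma=0$ on $\{\nu_n(Q)=0\}$, and remembering that the role of the lemma is merely to ensure that $n\sqrt{\mathbb{P}(\mathscr{E})}\sqrt{\mathbb{E}[\gamma^2]}$ is polynomially small. For that purpose any bound polynomial in $n$ suffices, so the crude almost-sure bound $\gamma\leqslant \|g/f\|_\infty$ is more than enough.
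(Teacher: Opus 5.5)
Your proof is correct and rests on the same observation as the paper's, namely $\gamma\leqslant\max\{W_i : X_i\in Q\}\leqslant\|g/f\|_\infty$ (with $\gamma=0$ when $Q$ contains no point). The paper then bounds $\gamma^2\leqslant\sum_i W_i^2\mathbf{1}_{X_i\in Q}$ and takes expectations to get $n\int_Q g^2/f\leqslant nc$, which is essentially your second route. Your first route keeps the almost-sure bound and gives the sharper, $n$-independent estimate $\mathbb{E}[\gamma^2]\leqslant\|g/f\|_\infty^2$, which is all the proof of Lemma \ref{lem:I} needs.
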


\begin{proof}It is clear that $\gamma$ is smaller than $\max \{W_i : X_i \in Q\}$, so $\gamma^2$ is smaller than $\sum_{i=1}^n W_i^2 \mathbf{1}_{X_i \in Q}$. Taking the expectation and using Hypothesis \ref{hyp:W} we get the result.
\end{proof}

\subsection{Concentration for the number of points}

Finally, we will need a uniform control of the number of points $X_i$ falling in each cube $Q \in \D_K$. We recall that $\nu_n(A) = \sum_{i=1}^n \delta_{X_i}(A)$ is the unweighted empirical measure of the $X_i$. Of course, the $n^{-p/d-10}$ here is somehow arbitrary; any term which is sufficiently small with respect to $n^{-p/d}$ would work.

\begin{proposition}\label{prop:concentration_number_of_points}
    With $K$ as in \eqref{def:K}, then $\forall \varepsilon > 0$, if $n$ is large enough,
    $$\mathbb{P}\left(\forall Q \in \D_K, (1-\varepsilon)n\int_Q f \leqslant \nu_n(Q) \leqslant (1+\varepsilon)n\int_Q f\right) \geqslant 1 - n^{-p/d-10}.$$
    \end{proposition}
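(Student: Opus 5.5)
For each fixed cube $Q \in \D_K$ I will apply the binomial concentration bound \eqref{ineq:concentration_sum_functional} in the unweighted case, and then take a union bound over the at most $2^{dK} \leqslant n$ cubes of $\D_K$. The point is that $n\int_Q f$ is of order $(\log n)^{\alpha d}$, a polylogarithmic quantity, so each per-cube failure probability decays like $\exp(-c(\log n)^{\alpha d})$. This beats any power of $n$ as soon as $\alpha d > 1$, which holds since $\alpha \geqslant 3$.

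Concretely, fix $Q \in \D_K$ and set $q = \int_Q f$. By Hypothesis \ref{hyp:W} and \eqref{eq:vol_DK}, we have $c^{-1}|Q| \leqslant q \leqslant c|Q|$, hence $nq \sim (\log n)^{\alpha d}$. Then $\nu_n(Q) = \sum_{i} \mathbf{1}_{X_i\in Q}$ is a sum of iid Bernoulli$(q)$ variables.

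Apply Bennett's inequality, i.e.\ \eqref{ineq:concentration_sum_functional} with weights equal to $1$ and $\varphi = \mathbf{1}_Q$, exactly as in the proof of Lemma \ref{lem:gamma_concentration}. Here $m = 1$ and $\sigma^2 = q(1-q)$, so both the upper and lower deviations satisfy
\[
\mathbb{P}\bigl(|\nu_n(Q) - nq| > \varepsilon nq\bigr) \leqslant 2\exp\Bigl(-nq(1-q)\,h\bigl(\tfrac{\varepsilon}{1-q}\bigr)\Bigr).
\]
For $n$ large, $q \leqslant 1/2$, and since $h$ is increasing (Lemma \ref{lem:h}), the exponent is at least $\tfrac12 h(\varepsilon)\, nq \geqslant C(\varepsilon)(\log n)^{\alpha d}$ for some constant $C(\varepsilon)>0$ depending also on $c$ and $d$.

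The union bound over $Q\in\D_K$ then gives a total failure probability at most $2n\exp(-C(\varepsilon)(\log n)^{\alpha d})$. Because $\alpha d > 1$, we have $(\log n)^{\alpha d} \gg \log n$, so this is smaller than $n^{-p/d-10}$ for $n$ large enough.

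There is no real obstacle here. The only points to check carefully are that the lower deviation is also covered by Bennett's inequality (it is, by applying it to $-\mathbf{1}_{X_i\in Q}$) and that $nq$ is bounded below uniformly in $Q$ by a multiple of $(\log n)^{\alpha d}$. The latter is exactly where the choice of $\omega$ in \eqref{def:K} and the lower bound $f\geqslant c^{-1}$ are used.
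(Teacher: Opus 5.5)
Your proof is correct and follows the paper's argument. Both use a per-cube exponential binomial tail bound with $n\int_Q f \gtrsim (\log n)^{\alpha d}$, followed by a union bound over the at most $2^{dK}\leqslant n$ cubes of $\D_K$. The only cosmetic difference is how the per-cube bound is obtained: the paper quotes the multiplicative Chernoff bound $2e^{-\delta^2 n p_Q/3}$ directly, whereas you derive it from Bennett's inequality with $m=1$, $\sigma^2=q(1-q)$, as in Lemma \ref{lem:gamma_concentration}.
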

    
    \begin{proof}
    For any fixed $Q$, $\nu_n(Q)$ has distribution $\mathrm{Bin}(n, p_Q)$ where $p_Q = \int_Q f$. The number of cubes in $\D_K$ is $2^{dK}$ which is smaller than $n$. By the union bound and Chernoff's inequality for Binomial random variables, for any $0<\delta < 1$,
    $$\mathbb{P}\left(\exists Q \in \D_K, \left|\frac{\nu_n(Q)}{np_Q}-1\right|>\delta \right) \leqslant \sum_{Q \in \D_K}\mathbb{P}\left(\left|\frac{\nu_n(Q)}{np_Q}-1\right|>\delta \right) \leqslant n \times 2e^{-\delta^2 np_Q/3}.$$
    By Hypothesis \ref{hyp:W}, we have $p_Q \geqslant c \log(n)^{\alpha d}/n$ for some $c>0$ and all $Q$. Hence, the probability bound above is smaller than $2\exp\{\log (n) - \delta^2\log(n)^{\alpha d}/3\}$ which is smaller than $n^{-p/d-10}$ for $n$ large enough. 
    \end{proof}

\section{Functional inequalities and transport}\label{appendix:functional_inequalities}

\subsection{Functional inequalities}
We use the Poincaré-Wirtinger inequality in the following form. 

\begin{proposition}\label{prop:poincare}
Let $Q \subset \mathbb{R}^d$ be an open connected set with smooth boundary. For any $p \in [1, \infty[$, there is a constant $C$ depending only on $p$ such that for any differentiable function $h$ on $Q$ such that $\nabla h \in L^p(Q)$, 
\begin{equation}
    \int_Q |h-m|^p \leqslant C \mathrm{diam}(Q)^p \int_Q |\nabla h|^p
\end{equation} 
where $m = \frac{1}{|Q|} \int_Q h$ is the mean value of $h$ on $Q$.
\end{proposition}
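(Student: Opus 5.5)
The plan is to prove the inequality in the setting where it is actually true with a constant depending only on $p$, namely for \emph{convex} $Q$. This covers every use in the paper, since the inequality is only applied on dyadic cubes $Q\in\D_K$ in the proof of Lemma~\ref{lem:II}. As literally worded, for an arbitrary connected smooth domain, the statement cannot hold with a constant depending only on $p$. Take a dumbbell: two unit balls joined by a thin tube of width $\eta$. Let $h$ equal $\pm1$ on the two balls and interpolate linearly along the tube. Then $\int_Q|h-m|^p\gtrsim 1$ while $\mathrm{diam}(Q)^p\int_Q|\nabla h|^p\lesssim \eta^{d-1}\to0$. So some geometric restriction is unavoidable, and convexity is the natural one.

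\textbf{Step 1 (reduction to $m$-independent form).} First I will reduce to a statement about a different centring constant. For any constant $a$, Jensen's inequality gives $\int_Q|h-m|^p\le 2^p\int_Q|h-a|^p$. It therefore suffices to bound $\inf_a\int_Q|h-a|^p$. Equivalently, it suffices to bound $\int_Q|h-a_*|^p$, where $a_*$ is chosen so that $\int_Q |h-a_*|^{p-2}(h-a_*)=0$, i.e.\ $a_*$ is the $L^p$-median. This choice is what makes the localization step below work. By density, I will also assume $h\in C^1(\overline Q)$.

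\textbf{Step 2 (needle decomposition).} Next I will apply the Payne--Weinberger bisection argument, or equivalently the localization lemma of Kannan--Lov\'asz--Simonovits, to the function $u=|h-a_*|^{p-2}(h-a_*)$, which has zero integral on $Q$.
\begin{itemize}
\item Repeatedly cut $Q$ by hyperplanes that bisect the integral of $u$, so that $\int u=0$ is preserved on both pieces.
\item In the limit, $Q$ is partitioned into ``needles'': segments $I$ of length at most $\mathrm{diam}(Q)$.
\item Each needle carries a weight $w_I$ with $w_I^{1/(d-1)}$ concave. This concavity comes from Brunn--Minkowski applied to the thin convex slabs.
\item On each needle, $\int_I u\,w_I=0$.
\end{itemize}
The inequality then reduces to the one-dimensional weighted estimate
\[
\int_I |\phi|^p w \le C_p\, |I|^p \int_I |\phi'|^p w,
\qquad \int_I |\phi|^{p-2}\phi\, w=0,
\]
where $w$ is log-concave and $|I|\le\mathrm{diam}(Q)$. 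Integrating this needle inequality over the decomposition recovers the $d$-dimensional bound.

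\textbf{Step 3 (the 1D inequality, main obstacle).} The key point is that $C_p$ must be independent of $w$, and hence of $d$. I would first try a direct argument.
\begin{itemize}
\item The orthogonality condition forces $\phi$ to vanish at some point $t_0\in I$.
\item Write $\phi(t)=\int_{t_0}^t\phi'$ and apply H\"older's inequality.
\item Use the fact that, for log-concave $w$, the tail ratio $\int_t^{b} w\,/\,w(t)$ is controlled by $b-t$. This is a Muckenhoupt/Hardy-type condition holding uniformly over log-concave weights.
\end{itemize}
This yields $C_p\lesssim 2^p$. If that estimate turns out to be too crude, the fallback is the sharp result of Ferone--Nitsch--Trombetti: the optimal constant is $(\pi_p)^{-p}$, obtained via a Pr\"ufer/comparison argument for the weighted $p$-Laplacian eigenproblem. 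Verifying this Hardy-type bound uniformly in $w$ is the step I expect to require the most care.

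The remaining pieces are routine: the measure-theoretic limit in the bisection and the approximation by smooth functions. Since only cubes are used in the paper, one could alternatively restrict to cubes and prove the estimate by scaling from the unit cube, though that route gives a constant depending on $d$ as well as $p$.
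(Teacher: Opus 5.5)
The paper gives no proof of this proposition. It quotes it as the classical Poincar\'e--Wirtinger inequality and applies it only on dyadic cubes, where scaling from the unit cube is enough; the paper's $\les$ convention already allows dependence on $d$. You are right that the statement is false as literally worded for arbitrary connected smooth domains, and your dumbbell example is correct. Restricting to convex $Q$ is the right repair. Your Steps 1 and 2 are sound modulo the standard limiting arguments: the factor $2^p$ from Jensen, and the localization that preserves $\int u=0$ on each needle with $w^{1/(d-1)}$ concave.

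The gap is in Step 3. Write $I=[a,b]$. Anchoring the Hardy argument at a zero $t_0$ of $\phi$ requires, after H\"older and Fubini, that $\int_s^b w\les |I|\,w(s)$ for all $s\geqslant t_0$, plus the mirror bound for $s\leqslant t_0$. This fails uniformly over log-concave weights. For $w(t)=e^{Mt}$ on $[0,1]$,
\[
\int_s^1 w\,/\,w(s)=\bigl(e^{M(1-s)}-1\bigr)/M,
\]
which blows up as $M\to\infty$. The tail bound holds only on the side of the maximum of $w$ where $w$ is nonincreasing. Orthogonality only tells you that $\phi$ changes sign somewhere, not that it vanishes at that maximum, so the direct argument does not close as written.

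The fix is to anchor at a maximum point $t_0$ of $w$ rather than at a zero of $\phi$. Log-concave weights are unimodal, so $w(t)\leqslant w(s)$ whenever $t_0\leqslant s\leqslant t$. H\"older plus Fubini then give
\[
\int_{t_0}^b |\phi-\phi(t_0)|^p w \leqslant \int_{t_0}^b |\phi'(s)|^p \Big(\int_s^b (t-t_0)^{p-1} w(t)\,dt\Big)\,ds \leqslant |I|^p \int_{t_0}^b |\phi'|^p w,
\]
and the same holds symmetrically on $[a,t_0]$. The condition $\int_I|\phi|^{p-2}\phi\,w=0$ is exactly the first-order condition saying that $c=0$ minimizes the convex function $c\mapsto\int_I|\phi-c|^pw$. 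Hence
\[
\int_I|\phi|^pw\leqslant\int_I|\phi-\phi(t_0)|^pw\leqslant|I|^p\int_I|\phi'|^pw.
\]
This is the needle inequality with $C_p=1$, and it yields the proposition on convex $Q$ with $C=2^p$. You then do not need the Ferone--Nitsch--Trombetti fallback. If you do invoke it, note that it already treats convex domains in every dimension, which would make your Steps 1--2 redundant apart from the factor $2^p$.
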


We also made repeated use of the following Proposition, which is proven in \cite{goldman2021convergence} (Lemma 3.4). 

\begin{proposition}\label{lem:bb}
Let $h_1, h_2$ be probability densities on a dyadic cube $Q$, with $\inf_Q h_1 >0$. Then, $\forall p\geqslant 1$,
\begin{equation}
    \W_{p,Q}(h_1, h_2)^p \leqslant \frac{\mathrm{diam}(Q)^p}{(\inf_Q h_1)^{p-1}} \int_Q |h_2 - h_1|^p.
\end{equation}
\end{proposition}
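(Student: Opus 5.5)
The plan is to prove the bound in the form
$$\W_{p,Q}^p(h_1,h_2)\leqslant C_{p,d}\,\frac{\mathrm{diam}(Q)^p}{a^{p-1}}\int_Q|h_2-h_1|^p,\qquad a:=\inf_Q h_1,\quad u:=h_2-h_1,$$
via the Benamou--Brenier formulation. Throughout I write $\W_{p,Q}$ for the unrestricted cost between measures on $Q$. The constant $C_{p,d}$ is harmless for every use in the paper, since the proposition is only ever invoked up to $\les$. I do not expect to reach constant $1$ by this route.

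\textbf{Step 1: one elementary move.} Let $\rho_0,\rho_1$ be densities on $Q$ of equal mass, and suppose the segment $\rho_t=(1-t)\rho_0+t\rho_1$ satisfies $\rho_t\geqslant b>0$ for all $t\in[0,1]$. Solve the Neumann problem $-\Delta\phi=\rho_1-\rho_0$ in $Q$ with $\partial_\nu\phi=0$ on $\partial Q$; it is solvable because the right-hand side has zero mean. Set $j=\nabla\phi$, so that $\partial_t\rho_t+\mathrm{div}\,j=0$ with no flux through $\partial Q$. The Benamou--Brenier formula (for $p$-costs, $\W_p^p\leqslant\int_0^1\int|j|^p\rho_t^{1-p}$) then gives
$$\W_{p,Q}^p(\rho_0,\rho_1)\leqslant b^{1-p}\int_Q|\nabla\phi|^p.$$
The gradient is controlled by the $L^p$ estimate for the Neumann Laplacian,
$$\int_Q|\nabla\phi|^p\leqslant C\,\mathrm{diam}(Q)^p\int_Q|\rho_1-\rho_0|^p.$$
By scaling it suffices to prove this on the unit cube, where it follows from Calderón--Zygmund theory after even reflection to the torus, together with the Poincaré inequality of Proposition \ref{prop:poincare}. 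For $p=1$ the $L^1$ estimate fails, so I would instead use the cruder bound $\W_1\leqslant\mathrm{diam}(Q)\cdot\frac12\int|\rho_1-\rho_0|$. This bound is exactly the claim when $p=1$, so the case $p=1$ needs no further work.

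\textbf{Step 2: the obstacle and the dyadic-in-mass trick.} Applying Step 1 directly to the segment from $h_1$ to $h_2$ fails, because $h_t\geqslant(1-t)a$ and the resulting time integral $\int_0^1(1-t)^{1-p}\,dt$ diverges for $p\geqslant2$. This degeneracy near $h_2$, which may vanish, is the main obstacle. I would handle it by cutting the segment geometrically. Define
$$\rho^{(k)}=h_2-2^{-k}u=2^{-k}h_1+(1-2^{-k})h_2,\qquad k\geqslant0.$$
Then $\rho^{(0)}=h_1$ and $\rho^{(k)}\to h_2$ uniformly. On the segment from $\rho^{(k)}$ to $\rho^{(k+1)}$ every density is a convex combination of $h_1$ and $h_2$ with weight at least $2^{-k-1}$ on $h_1$, so it is $\geqslant2^{-k-1}a$. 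The increment along this segment is $\rho^{(k+1)}-\rho^{(k)}=2^{-k-1}u$.

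Step 1 with $b=2^{-k-1}a$ then gives
$$\W_{p,Q}(\rho^{(k)},\rho^{(k+1)})\leqslant C\,\mathrm{diam}(Q)\,\|u\|_p\,a^{-(p-1)/p}\,2^{-(k+1)/p}.$$

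\textbf{Step 3: summation.} By the triangle inequality for $\W_p$, and since $\W_p(\rho^{(k)},h_2)\to0$ by uniform convergence on a bounded set, I sum over $k$:
$$\W_{p,Q}(h_1,h_2)\leqslant C\,\mathrm{diam}(Q)\,\|u\|_p\,a^{-(p-1)/p}\sum_{k\geqslant0}2^{-(k+1)/p}.$$
The geometric series converges for every finite $p$. Raising to the power $p$ gives the claim with $C_{p,d}$. The only genuinely technical input is the Neumann $L^p$ gradient estimate on a cube with the stated diameter scaling, and I would cite it rather than reprove it.
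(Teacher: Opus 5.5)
Your argument is correct up to the constant, which you already flagged. For $p>1$ it gives the inequality with a factor $C_{p,d}$ in front rather than $1$; for $p=1$ your total-variation bound even gives the factor $\tfrac12$.

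The paper does not prove this proposition itself. It only points to \cite[Lemma 3.4]{goldman2021convergence}. The standard argument behind such estimates uses exactly your three ingredients: the Neumann potential of $h_2-h_1$, Benamou--Brenier along the linear interpolation, and the $L^p$ gradient estimate for the Neumann problem on the cube. That argument also yields a constant depending on $p$ and $d$. Every use of the proposition in the paper (proofs of Lemma \ref{lem:pre-E} and Lemma \ref{lem:II}) goes through $\les$, so nothing is lost.

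The one real difference is your Step 2. The divergence of $\int_0^1(1-t)^{1-p}\,dt$ comes from using the energy form $\W_p^p\leqslant\int_0^1\|v_t\|^p_{L^p(\rho_t)}\,dt$ with linear time. The length form $\W_p(\rho_0,\rho_1)\leqslant\int_0^1\|v_t\|_{L^p(\rho_t)}\,dt$ avoids it; this is Ambrosio--Gigli--Savar\'e, Thm.~8.3.1, used this way by Peyre and Ledoux. Take $v_t=\nabla\phi/\rho_t$, where $\phi$ is the Neumann potential of $h_2-h_1$, and use $\rho_t\geqslant(1-t)a$. In one line you get $\W_p(h_1,h_2)\leqslant a^{-(p-1)/p}\|\nabla\phi\|_{L^p}\int_0^1(1-t)^{-(p-1)/p}\,dt=p\,a^{-(p-1)/p}\|\nabla\phi\|_{L^p}$.

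Your geometric cutting plus the triangle inequality is a discretization of that integral. Block $k$ corresponds to $t\in[1-2^{-k},1-2^{-k-1}]$ and contributes $\approx 2^{-k/p}$. So your route is equally valid and only needs the energy form on each block, at the price of a slightly worse constant.

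One small correction: $\rho^{(k)}\to h_2$ in $L^1$, but not necessarily uniformly, since $h_1,h_2$ need not be bounded. $L^1$ convergence suffices, because $\W_p^p(\rho^{(k)},h_2)\leqslant\mathrm{diam}(Q)^p\,2^{-k}\|u\|_{L^1}$.
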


\subsection{Transport map between almost uniform measures}

We end this section by stating Proposition 2.4 from \cite{ambrosio2022quadratic}, a result on the existence of a (non-optimal) transport map between almost uniform measures. The $\alpha$-Hölder norm used in the lemma is defined by 
\begin{equation}\label{def:alpha_holder_norm}
    |h|_\alpha = \sup_{x,y \in (0,1)^d, x\neq y} \frac{|h(x) - h(y)|}{|x-y|^\alpha} + \sup_{x \in (0,1)^d} |h(x)|
\end{equation}
and the Lipschitz constant of a function $h$ is defined by 
\begin{equation}
    \mathrm{Lip}(h) = \sup_{x,y \in (0,1)^d, x\neq y} \frac{|h(x) - h(y)|}{|x-y|}.
\end{equation}

\begin{lemma}\label{lem:transport_map}
    For any $d\geqslant 1$ and $\alpha \in ]0,1]$, there exists a constant $c_{\alpha,d} > 0$ such that for any density $\rho$ on $(0,1)^d$ such that $|\rho - 1|_\alpha \leqslant 1/2$, there is a map $T : (0,1)^d \to (0,1)^d$ preserving the boundary of $(0,1)^d$ and such that $T_\# \rho = 1$ and 
    \begin{equation}
        \mathrm{Lip}(T), \mathrm{Lip}(T^{-1}) \leqslant 1+ c_{\alpha,d}|\rho - 1|_\alpha.
    \end{equation}
\end{lemma}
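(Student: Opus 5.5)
The plan is to build $T$ by the Dacorogna--Moser flow method. We interpolate linearly between $\rho$ and the uniform density, move mass along the gradient of a Neumann potential, and take $T$ to be the time-one flow map. Set $\eta=|\rho-1|_\alpha\leqslant 1/2$, so that $1/2\leqslant \rho\leqslant 3/2$.

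\textbf{Step 1: the potential.} Let $\varphi$ solve
\[
\Delta\varphi=\rho-1 \ \text{ in } (0,1)^d,\qquad \partial_\nu\varphi=0 \ \text{ on } \partial(0,1)^d .
\]
The compatibility condition $\int(\rho-1)=0$ holds because $\rho$ is a probability density. Corners make boundary Schauder theory awkward, so I would reflect instead.
\begin{itemize}
\item Extend $\rho-1$ evenly across every face to $(-1,1)^d$, then periodically to the torus $\mathbb{T}^d=\R^d/(2\mathbb{Z})^d$.
\item The extension still has zero mean, and its $\alpha$-Hölder seminorm is at most a dimensional constant times $\eta$. The reason is that even reflection of a $C^\alpha$ function is $C^\alpha$ with a comparable seminorm.
\item Solve $\Delta\varphi=\rho-1$ on the torus with zero mean. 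By uniqueness the solution inherits the even symmetries, so $\partial_\nu\varphi=0$ on the faces of the cube.
\item Periodic Schauder estimates, together with the zero-mean normalisation, give $\|D^2\varphi\|_{C^{0,\alpha}}+\|\nabla\varphi\|_\infty\leqslant C_{\alpha,d}\,\eta$.
\end{itemize}
In particular $\nabla\varphi$ is Lipschitz with constant $\lesssim\eta$.

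\textbf{Step 2: the flow.} Define $\rho_t=(1-t)\rho+t$, so $\rho_t\geqslant 1/2$, and the velocity field $v_t=\nabla\varphi/\rho_t$. Then
\[
\partial_t\rho_t+\operatorname{div}(\rho_t v_t)=(1-\rho)+\Delta\varphi=0 .
\]
Moreover $v_t\cdot\nu=0$ on each face. By the symmetry, every face and every lower-dimensional stratum of the cube is invariant under the flow. Hence the flow $\Phi_t$ of $v_t$ maps $(0,1)^d$ onto itself and preserves its boundary.

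Since $v_t$ is Lipschitz, the continuity equation has a unique solution, which gives $(\Phi_t)_\#\rho=\rho_t$. Set $T=\Phi_1$; then $T_\#\rho=1$.

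For the Lipschitz bound,
\[
\mathrm{Lip}(v_t)\leqslant \frac{\mathrm{Lip}(\nabla\varphi)}{\inf\rho_t}+\frac{\|\nabla\varphi\|_\infty\,\mathrm{Lip}(\rho_t)}{(\inf\rho_t)^2}.
\]
The second term is the delicate one: $\rho$ is only $C^\alpha$, so $\mathrm{Lip}(\rho_t)$ is not available.

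\textbf{Step 3: the main obstacle.} To handle this, change the ODE. In $\dot X=\nabla\varphi(X)/\rho_t(X)$, only the spatial regularity of the direction field matters for the Gronwall estimate on $D\Phi_t$, and $\rho_t$ is merely Hölder.

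My first attempt would be a time reparametrisation. Instead of the linear interpolation, use a map $S$ with $S_\#\rho=1$ built as the flow of $w=\nabla\varphi$ up to a variable-speed factor. Concretely, solve $\dot X_s=\nabla\varphi(X_s)$ and correct the density mismatch by an iteration. At each step the density error is of order $\eta^2$ in $C^\alpha$, and each step's map has Lipschitz constant $1+O(\eta_j)$. Since $\sum\eta_j\lesssim\eta$, the composition of the maps converges with $\mathrm{Lip}\leqslant 1+C\eta$.

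Alternatively, in the linear-interpolation scheme, one can view $D\Phi_t$ as solving $\frac{d}{dt}D\Phi_t=Dv_t(\Phi_t)\,D\Phi_t$. The needed bound on $\rho_t\circ\Phi_t$ should follow because $\rho_t(\Phi_t(x))$ is given explicitly by $\rho(x)/\det D\Phi_t(x)$.

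Once $\mathrm{Lip}(v_t)\leqslant C\eta$ is secured, Gronwall gives
\[
\mathrm{Lip}(T)\leqslant e^{C\eta}\leqslant 1+c_{\alpha,d}\,\eta \qquad\text{for }\eta\leqslant 1/2 .
\]
The same bound for $T^{-1}$ follows by running the flow backward in time, which has the same velocity bounds.
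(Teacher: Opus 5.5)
The paper does not prove this lemma; it quotes it from \cite{ambrosio2022quadratic}. Your Steps 1--2 are a reasonable setup, but there is a genuine gap exactly where you place it: Step 3 never produces a Lipschitz map, and neither suggested fix can.

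In fact, for $d\geqslant 2$ and $\rho$ merely $C^{0,\alpha}$, the linear-interpolation flow map $T=\Phi_1$ is in general \emph{not} Lipschitz. So no bound $\mathrm{Lip}(v_t)\lesssim\eta$ can be ``secured'', and the sentence ``since $v_t$ is Lipschitz'' in Step 2 is unjustified. The flow moves each point along the integral curve of $\nabla\varphi$ through it, at a speed modulated by $1/\rho_t$. Two points on neighbouring integral curves, at distance $\delta$ across a $C^\alpha$ cusp of $\rho$, therefore accumulate a longitudinal offset of order $\eta^2\delta^\alpha\gg\delta$.

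Concretely, take $d=2$ and $\rho=1+\eta\bigl(\psi_1(x_1)+\psi_2(x_2)\bigr)$, with $\psi_1$ smooth of mean zero and $\psi_2(s)=|s-\tfrac12|^\alpha-c_0$ of mean zero. Then $\nabla\varphi=\eta\bigl(\Psi_1'(x_1),\Psi_2'(x_2)\bigr)$, where $\Psi_i''=\psi_i$ and $\Psi_i'(0)=\Psi_i'(1)=0$. By symmetry $\Psi_2'(\tfrac12)=0$, so the line $\{x_2=\tfrac12\}$ is invariant. Take $x=(x_1,\tfrac12)$ and $x'=(x_1,\tfrac12+\delta)$ with $\Psi_1'(x_1)\neq 0$. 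A Gronwall comparison of the first components gives $|\Phi_1(x)-\Phi_1(x')|\gtrsim \eta^2\delta^\alpha$.

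The identity $\rho_t\circ\Phi_t=\rho/\det D\Phi_t$ does not help: it controls only the Jacobian determinant, not this shear. Your iterative variant fails for a related reason. After flowing by $\nabla\varphi$, the residual density contains $\int_0^1(\rho\circ Y_s-\rho)\,ds$. This term is small in sup norm, but its $C^{0,\alpha}$ seminorm is in general of the same order $\eta$ as $[\rho]_\alpha$, for instance when $Y_s$ moves transversally to a cusp of $\rho$. Composition with a near-identity map is not close to the identity on $C^{0,\alpha}$. Hence there is no quadratic gain and no summable sequence $\eta_j$.

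The missing idea is the Dacorogna--Moser one. Because the target density is $1$, the condition $T_\#\rho=1$ is the Jacobian equation $\det DT=\rho$, in which $\rho$ is evaluated at $x$ and never composed with $T$.
\begin{itemize}
\item \textbf{Setup.} Look for $T=\mathrm{id}+\nabla\psi$ with $\psi$ even-periodic, as in your Step 1. Write $\det(I+A)=1+\operatorname{tr}A+Q(A)$, where $Q$ is the sum of the principal minors of orders $2,\dots,d$. The equation becomes $\Delta\psi=\rho-1-Q(D^2\psi)$.
\item \textbf{Contraction.} Your Schauder estimate and the algebra property of $C^{0,\alpha}$ make $\psi\mapsto\Delta^{-1}\bigl(\rho-1-Q(D^2\psi)\bigr)$ a contraction on $\{\|D^2\psi\|_{C^{0,\alpha}}\leqslant C\eta\}$ for $\eta\leqslant\eta_0(\alpha,d)$.
\item \textbf{Compatibility.} The condition $\int_{\mathbb{T}^d}Q(D^2\psi)=0$ holds because minors of Hessians are divergences (null Lagrangians).
\item \textbf{Boundary.} The reflection symmetry is preserved, since $Q(RAR)=Q(A)$ for coordinate reflections $R$. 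So $T$ commutes with the reflections, maps the cube onto itself, and preserves its faces.
\item \textbf{Lipschitz bounds.} $\|D^2\psi\|_\infty\leqslant C\eta$ on the convex cube gives injectivity, $\mathrm{Lip}(T)\leqslant 1+C\eta$ and $\mathrm{Lip}(T^{-1})\leqslant (1-C\eta)^{-1}$.
\end{itemize}

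The small-$\eta$ regime is the only one the paper uses, since there $|f_Q-1|_\alpha\lesssim\ell\to 0$. For $\eta_0<\eta\leqslant 1/2$ you need a non-perturbative existence result with uniform bounds; any bounded bi-Lipschitz constant can then be absorbed into $c_{\alpha,d}$. Finally, for $\alpha=1$ the Schauder estimate of Step 1 fails. Run the argument with $\alpha=1/2$ instead, using $|h|_{1/2}\lesssim |h|_1$ on the unit cube.
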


\noindent\hrulefill

\end{document}